\numberwithin{equation}{section}
\newcommand{\R}{\mathbb R}
\newcommand{\N}{\mathbb N}
\newcommand{\Z}{\mathbb Z}
\newcommand{\T}{\mathbb T}
\newcommand{\bE}{\mathbb E}
\newcommand{\cL}{\mathcal L}
\newcommand{\cEl}{\mathcal E_\lambda}
\newcommand{\cE}{\mathcal E}
\newcommand{\cH}{\mathcal H}
\newcommand{\Proj}{\mathbb P}
\newcommand{\e}{\varepsilon}
\newcommand{\ug}{u^{\varepsilon,\nu}}
\newcommand{\pg}{p^{\varepsilon,\nu}}
\newcommand{\om}{\omega}
\newcommand{\be}{\begin{equation}}
\newcommand{\ee}{\end{equation}}
\newcommand{\ds}{\displaystyle}
\newcommand{\p}{\partial}
\newcommand{\dv}{\mathrm{div}}
\newcommand{\rot}{\mathrm{rot}}
\newcommand{\sgn}{\mathrm{sgn}}
\newcommand{\uint}{u^{\text{int}}}
\newcommand{\ustat}{u^{\text{BL,res}}}
\newcommand{\vint}{v^{\text{int}}}
\newcommand{\buint}{{\bar u}^{\text{int}}}
\newcommand{\wnd}{w_{n,N}^\delta}
\newcommand{\ubl}{u^{\text{BL}}}
\newcommand{\mean}[1]{\left\langle #1\right\rangle}
\newcommand{\uapp}{u^{\text{app}}}
\newcommand{\wrm}{w^{\text{rem}}}
\newcommand{\bw}{\bar w}
\newcommand{\cFa}{\mathcal F_\alpha}
\newcommand{\cO}{\mathcal O}
\newcommand{\U}{\Upsilon}
\newtheorem{remark}{Remark}[section]
\title{Asymptotic behaviour of a rapidly rotating fluid with random stationary surface stress}
\author{Anne-Laure Dalibard\thanks{CNRS/D\'epartement de math\'ematiques et applications, UMR 8553, \'Ecole normale sup\'erieure, 45 rue d'Ulm, F-75005 Paris, France.}}
\begin{document}

\maketitle

\begin{abstract}
The goal of this paper is to describe in mathematical terms the effect on the ocean circulation of a random stationary wind stress at the surface of the ocean. In order to avoid singular behaviour, non-resonance hypotheses are introduced, which ensure that the time frequencies of the wind-stress are different from that of the Earth rotation. We prove a convergence result for a three-dimensional Navier-Stokes-Coriolis system in a bounded domain, in the asymptotic of fast rotation and vanishing vertical viscosity, and we exhibit some random and stationary boundary layer profiles. At last, an average equation is derived for the limit system in the case of the non-resonant torus.
\end{abstract}

\begin{keywords}
 Rotating fluids, Oceanic circulation, Stationary setting, Boundary layer
\end{keywords}

\begin{AMS}
 76U05, 35B40, 86A05,   35R60
\end{AMS}

\section{Introduction}
The goal of this paper is to study mathematically a problem arising in ocean dynamics, namely the behaviour of ocean currents under stimulation by the wind. Following the books by Pedlosky \cite{Pedlosky1,Pedlosky2} and Gill \cite{Gill}, the velocity of the fluid in the ocean, denoted by $u$, is described by the incompressible Navier-Stokes equations in three dimensions, in rotating coordinates, with Coriolis force
\begin{eqnarray*}
\rho(\p_t u + u\cdot \nabla u+ 2 \Omega e\wedge u) - A_h\Delta_h u - A_z\p_z^2 u &=&\nabla p,\quad t>0,\ (x,y,z)\in U(t)\subset\R^3,\\
\dv u&=&0.
\end{eqnarray*}
In the above equation, $A_h$ and $A_z$ are respectively the horizontal and vertical turbulent viscosities, $p$ is the pressure inside the fluid, $\rho$ is the homogeneous and constant density, and $\Omega e$ is the rotation vector of the Earth ($\Omega>0$ and $e$ is a unitary vector, parallel to the pole axis, oriented from South to North). $U(t)$ is an open set in $\R^3$, depending  on the time variable $t$: indeed, the interface between the ocean and the atmosphere may be moving, and is described in general by a free surface $z=h(t)$. 

In order to focus on the influence of the wind, let us now make a series of crude modeling hypotheses on the boundary conditions: first, we assume that the lateral boundaries of the ocean are flat, and that the velocity $u$ satisfies periodic boundary conditions in the horizontal variable. We also neglect the fluctuations of the free surface, namely, we assume that $h(t)\equiv a D$, with $a, D$  positive constants. This approximation, although highly unrealistic, is justified by the fact that the behaviour of the fluid around the surface is in general very turbulent. Hence, as emphasized in \cite{DesjardinsGrenier}, only a modelization is tractable and meaningful. Let us also mention that the justification of this rigid lid approximation starting from a free surface is mainly open from a mathematical point of view: we refer to \cite{AP} for the derivation of Navier-type wall laws for the Laplace equation, under general assumptions on the interface, and to \cite{LT} for some elements of justification in the case of the great lake equations.
At last, we assume that the bottom of the ocean is flat;  the case of a nonflat bottom has already been investigated by several authors, and we refer to  \cite{DesjardinsGrenier,dgv,MasmoudiCPAM} for more details regarding that point.

As a consequence, we assume that $U(t)=[0,a_1 H)\times [0,a_2 H )\times [0,a D)$, where $H>0$ is the typical horizontal lengthscale,  and $u$ satisfies the following boundary conditions
\begin{eqnarray*}
&& u\text{ is periodic in the horizontal variable  with period }[0,a_1 H)\times [0,a_2 H ),\\
&&u_{|z=0}=0\quad\text{(no slip condition at the bottom of the ocean),}\\
&&\p_z u_{h|z=aD}=A_0\sigma\quad\text{(influence of the wind),}\\
&& u_{3|z=aD}=0\quad\text{(no flux condition at the surface)}.
\end{eqnarray*}

Let us now reduce the problem by scaling arguments. First, we neglect the effect of the horizontal component of the rotation vector $e$, which is classical in a geophysical framework (see \cite{CheminDGG}). Furthermore, we assume that the motion occurs at midlatitudes (far from the equator), and on a ``small'' geographical zone, meaning $H\ll R_0$, where $R_0$ is the earth radius. In this setting, it is legitimate to use the so-called {\it $f$-plane approximation} (see \cite{SaintRayGallagherHbk}), and to neglect the fluctuations of the quantity $e_3\cdot e$ with respect to the latitude. In rescaled variables, the equation becomes
\be
\p_t \ug + \ug\cdot \nabla \ug+ \frac{1}{\e}e_3\wedge \ug - \nu_h\Delta_h \ug -\nu_z\p_z^2 \ug  + \begin{pmatrix}
                                                                                                   \nabla_h p\\\frac{1}{\gamma^2}\p_z p
                                                                                                  \end{pmatrix}
=0,\label{eq:shallow}
\ee
where
$$
\e:=\frac{U}{2H \Omega}, \ \nu_h:=\frac{A_h}{\rho u H}, \ \nu_z:=\frac{L A_z}{\rho U D^2},\ \gamma:=\frac{D}{H},
$$
and $U$ is the typical horizontal relative velocity of the fluid. We are interested in the limit
$$
\nu_z\ll 1,\quad \e\ll 1,\quad \nu_h\sim 1.
$$Such a { scaling of parameters}
  seems convenient for instance for the mesoscale eddies that have been observed in western Atlantic (see \cite{Pedlosky1}). One has indeed
 $$U\sim  5\,\mathrm{cm\cdot s}^{-1} ,\quad H\sim 100\,\mathrm{km}, \quad D\sim 4\,\mathrm{km} \hbox{ and }\Omega \sim 10^{-4} \mathrm s^{-1}$$
 which leads to $\e \sim 5\times 10^{-3} .$
 Possible values for the turbulent viscosities given in \cite {Pedlosky1} are
 $$A_h\sim 10^6\,\mathrm{kg\cdot m^{-1} \cdot s^{-1}} \hbox{ and } A_z \sim 1\,\mathrm{kg\cdot m^{-1} \cdot s^{-1}} $$
 so that
 $\nu_z\sim 10^{-3},\ \nu_h\sim 1.$ In the rest of the article, we denote by $\nu$ the small parameter $\nu_z$, and we assume that $\nu_h=1$.
Additionally, we do not take into account the shallow water effect, and thus we take $\gamma=1$, even though this is not consistent with the values of $H$ and $D$ given above. Indeed, the thin layer effect, which corresponds to $\gamma\ll 1$, is expected to substantially complicate the analysis, but without modifying the definition of boundary layers. Thus, in order to focus on the influence of a random forcing, we study the classical rotating fluids equation (see for instance \cite{CheminDGG}), that is
\be
\p_t \ug + \ug\cdot \nabla \ug+ \frac{1}{\e}e_3\wedge \ug - \Delta_h \ug -\nu\p_z^2 \ug + \nabla p=0.\label{eq:depart}
\ee
Moreover, the amplitude of the wind stress at the surface of the ocean may be very large; thus we set
$$
\beta:=\frac{A_0S_0D}{U},
$$
where $S_0$ is the amplitude of the wind velocity, and we study the limit $\beta\to \infty$. Equation \eqref{eq:depart} is thus supplemented with the boundary conditions
\be\label{CL}
\begin{aligned}
&&\ug_{|z=0}=0,\\
 &&\p_z \ug_{h|z=a}={\beta}\sigma^{\e},\\
&&\ug_{3|z=a}=0.
\end{aligned}
\ee
Additionally, $\ug$ is assumed to be $\T^2$-periodic in the horizontal variable $x_h$, where $\T^2:=\R^2/[0,a_1)\times [0,a_2)$. 
In the rest of the paper, we set $\U:=\T^2\times (0,a)$.
The assumptions on the wind-stress $\sigma^\e$ will be made clear later on.

\subsection{General results on rotating fluids}

\label{ssec:gal}
Let us now explain heuristically what is the expected form of $u^{\e,\nu}$ at the limit. Assume for instance that $\nu=\e$ and that the family $u^{\e,\nu}$ behaves in $L^2([0,T]\times \U)$ like some function $u^0(t,t/\e, x)$, with $u^0\in L^\infty([0,T]\times[0,\infty)\times \U)$ sufficiently smooth. Then it is natural to expect that $u^0$ satisfies
\be\label{eq:w-lim}
\left\{
\begin{array}{l}
 \p_\tau u^0 + e_3\wedge u^0=0,\\
\dv u^0=0,\\
u^0_{3|z=0}=u^0_{3|z=a}=0.
\end{array}
\right.
\ee

In fact, the above equation can be derived rigorously from \eqref{eq:depart} if the dependence of the function $u^0$ with respect to the fast time variable $\tau$ is known \textit{a priori}; the goal of the two-scale convergence theory, formalized by Gr\'egoire Allaire in \cite{Allaire} after an idea of Gabriel N'Guetseng (see \cite{Ng}), is precisely to justify such derivations in the context of periodic functions. However, in this paper, we do not need to resort to such techniques; our aim is merely to build an approximate solution thanks to formal computations.

In view of \eqref{eq:w-lim}, we introduce the vector space
$$
\mathcal H:=\left\{ u\in L^2(\U)^3,\ \dv u=0,\  u_{3|z=0}=u_{3|z=a}=0\right\}.
$$
We denote by $\Proj$ the orthogonal projection on $\mathcal H$ in $L^2(\U)^3$, and we set $L:=\Proj(e_3\wedge \cdot)$. Notice that $\Proj$ differs from the Leray projector in general, because of the no-flux conditions at the bottom and the surface of the fluid. It is known (see for instance \cite{CheminDGG}) that there exists a hilbertian basis $(N_k)_{k\in \Z^3\setminus \{0\}}$ of $\cH$ such that for all $k$,
$$
\Proj(e_3\wedge N_k)  = i\lambda_k N_k \hbox{ with }\lambda_k =  -\frac{k_3'}{|k'|},
$$
where $k'=(2\pi k_1/a_1,2\pi k_2/a_2, \pi k_3/a ).$
The vector $N_k$ is given by
$$
N_k(x_h,z)=e^{ik_h'\cdot x_h}\begin{pmatrix}
                              \cos (k_3' z)n_1(k)\\ \cos (k_3' z)n_2(k)\\ \sin(k_3'z) n_3(k)
                             \end{pmatrix}
$$
where
$$\left\{\begin{array}{l}
 \ds n_1(k)=\frac{1}{\sqrt{a_1a_2a}|k'_h|}(ik'_2+k'_1\lambda_k)\\
 \ds n_2(k)=\frac{1}{\sqrt{a_1a_2a}|k'_h|}(-ik'_1+k'_2\lambda_k)\\
 \ds n_3(k)=i\frac{|k'_h|}{\sqrt{a_1a_2a}|k'|}
 \end{array}\right.
\quad\text{if }k_h\neq 0, $$
and
$$
\left\{\begin{array}{l}
 \ds n_1(k)=\frac{\sgn(k_3)}{\sqrt{a_1a_2a}}\\
 \ds n_2(k)=\frac{i}{\sqrt{a_1a_2a}}\\
\ds n_3(k)=0 \end{array}\right.\text{ else}.
$$
Let $\cL(\tau):\cH\to\cH$ be the semi-group associated with equation \eqref{eq:w-lim}, i.e. $\cL(\tau)=\exp(-\tau L)$ for $\tau\geq 0$. 
We infer from  equation \eqref{eq:w-lim} that $u^0(t,\tau)\in\cH$ almost everywhere, and that there exists a function $u^0_L$ such that
$$
u^0=\cL(\tau)u^0_L=\sum_{k}e^{-i\lambda_k\tau}\mean{N_k,u^0_L} N_k.
$$
Consequently the main effect of the Coriolis operator $L$ is to create waves, propagating at frequencies of order $\e^{-1}.$ The goal is now to identify the function $u^0_L$, which in general depends on the slow time variable $t$. This is achieved thanks to filtering methods, which were introduced by
S.~Schochet in \cite{schochet}, and further developed by E.~Grenier in \cite{grenier} in the context of rotating fluids. Precisely, setting
$$
u^{\e,\nu}_L= \exp\left( \frac{t}{\e} L\right) u^{\e,\nu}
$$
it is proved in \cite{CheminDGG,MasmoudiCPAM} in the case of Dirichlet boundary conditions at $z=0$ and $z=a$ that $u^{\e,\nu}_L$ converges strongly  in $L^2_\text{loc}([0,\infty)\times \U)$ towards a function $u^0_L$. Moreover, the function $u^0_L$ satisfies a nonlinear equation of the type
 \be\label{eq:envelope_gal}
\p_t u^0_L + \bar Q(u^0_L,u^0_L)- \Delta_h u^0_L= \bar S,
\ee
where the quadratic term $\bar Q(u^0_L,u^0_L)$ corresponds to the filtering of oscillations in the non-linear term $\ug \cdot \nabla\ug$, and the source term $\bar S$ to the filtering of oscillations in lower order terms in $u^{\e,\nu}.$ The quadratic term $\bar Q$ is defined as follows (see \cite{CheminDGG}, Proposition 6.1 and \cite{MasmoudiCPAM}): for $w_1,w_2\in \cH \cap H^1(\U)$, $\bar Q(w_1,w_2)$ is the weak limit as $\e\to 0$ of the quantity
\begin{eqnarray*}
&& \frac{1}{2} \exp\left( \frac{t}{\e} L\right)\Proj \left(\exp\left( -\frac{t}{\e} L\right)w_1\cdot \nabla \exp\left( -\frac{t}{\e} L\right)w_2 \right)\\
&+&\frac{1}{2}   \exp\left( \frac{t}{\e} L\right)\Proj \left(\exp\left( -\frac{t}{\e} L\right)w_2\cdot \nabla \exp\left( -\frac{t}{\e} L\right)w_1\right).
\end{eqnarray*}
Hence
\be
\bar Q(w_1,w_2)=\sum_{m\in \Z^3}\sum_{(k,l)\in\mathcal K_m}\mean{N_k, w_1}\mean{N_l,w_2}\alpha_{k,l,m} N_m,\label{def:Q}
\ee
where
the resonant set $\mathcal K_m$ is defined  for $m\in\Z^3\setminus\{0\}$, by
$$
\mathcal K_m:=\left\{ (k,l)\in \Z^6,\begin{array}{l} k_h+ l_h=m_h,\\ \lambda_k+\lambda_l=\lambda_m\\\end{array}
\text{ and }\exists \eta\in\{-1,1\}^2,\ \eta_1 k_3 + \eta_2 l_3 = m_3 \right\}
$$
and the coefficient $\alpha_{k,l,m}$ by
$$
\alpha_{k,l,m}=\frac{1}{2}\left(\mean{N_m,(N_k\cdot \nabla) N_l } + \mean{N_m,(N_l\cdot \nabla) N_k } \right).
$$
In order that the equation on $u^0_L$ is defined unambiguously, the value of the source term $\bar S$ has to be specified. In the present case, we have
$$
\bar S= -\sqrt\frac{\nu}{\e}S_B(u^0_L) - \nu\beta S_T(\sigma),
$$
where $S_B:\cH\to \cH$ is a linear continuous non-negative operator (see \cite{CheminDGG,forcage-resonant,MasmoudiCPAM}) recalled in formula \eqref{def:Sekman} below, and $S_T(\sigma)$ depends on the time oscillations in the wind-stress $\sigma.$ Thus, in the next paragraph, we precise the assumptions on the wind-stress $\sigma^{\e}$, and we define the source term $S_T$. In the above formula and throughout the article, the subscripts $B$ and $T$ refer to top and bottom, respectively.

\subsection{Definition of the limit equation}

Let us first introduce the hypotheses on the dependance of the wind velocity $\sigma^\e$ with respect to the time variable. Since the Coriolis operator generates oscillations at frequencies of order $\e^{-1}$, it seems natural to consider functions $\sigma^\e$ which depend on the fast time variable $t/\e$. The case where this dependance is periodic, or almost periodic, has been investigated by N. Masmoudi in \cite{MasmoudiCPAM} in the non-resonant case, that is, when the frequencies of the wind-stress are different from $\pm 1$. The results of \cite{MasmoudiCPAM} were then extended by the author and Laure Saint-Raymond in \cite{forcage-resonant}. In fact, it is proved in \cite{forcage-resonant} that when the wind-stress  oscillates with the same frequency as the rotation of the Earth (i.e. $\pm 1$), the typical size of the boundary layers is much larger than the one of the classical Ekman layers. Moreover, a resonant forcing may overall destabilize the whole fluid for large times. Here, we wish to avoid these singular behaviours, and thus to consider a more general non-resonant setting.

Let
$(E, \mathcal A,m_0)$ be a probability space, and let
$(\theta_\tau)_{\tau\in\R}$ be a measure preserving group transformation acting on
$E$. We assume that the function $\sigma^\e$ can be written
$$
\sigma^\e(t,x_h)=\sigma\left( t,\frac{t}{\e},x_h;\om \right),\quad t>0,\ x_h\in\T^2,\ \om\in E,
$$
and that the function $\sigma$ is stationary, i.e.
$$
\sigma(t,\tau+s,x_h;\om)=\sigma(t,\tau,x_h;\theta_s\om)
$$
almost everywhere.

The periodic setting can be embedded in the stationary (ergodic)
setting in the following way (see \cite{PapanicolaouVaradhan}): 
take $E=\R / \Z \simeq [0,1)$, and let $m_0$ be the Lebesgue measure on $E$. Define the group transformation $(\theta_\tau)_{\tau\in\R}$ by
$$
\theta_\tau s = s + \tau \quad \text{mod } \Z\quad \forall (\tau,s)\in \R\times E.
$$
Then it is easily checked that $\theta_\tau$ preserves the measure $m_0$ for all $\tau\in\R$. Thus the periodic setting is a particular case of the stationary setting; the almost periodic setting can also be embedded in the stationary setting, but the construction is more involved, and we refer the interested reader to \cite{PapanicolaouVaradhan}.

The interest of the stationary setting, in addition to its generalization of the almost periodic one, lies in the introduction of some randomness in equation \eqref{eq:depart}. Hence, we also expect to recover a random function in the limit $\e,\nu\to 0$. In fact, we will prove rigorously a strong convergence result of this kind; additionally, we will characterize the average behaviour of $\ug$ in the limit.  Thus, one of the secondary goals of this paper is to derive some averaging techniques adapted to highly rotating fluids, which may be of interest in the framework of a mathematical theory of weak turbulence.

Since the function $\sigma$ is not an almost periodic function, we now introduce a notion of approximate spectral decomposition of $\sigma$. For $\alpha>0$, define the operator $\cFa$ by
\be
\cFa\sigma:\lambda\in\R\mapsto \frac{1}{2\pi}\int_{\R} \exp(-\alpha |\tau|) e^{-i \lambda \tau}\sigma(\tau)\:d\tau,
\ee
and define the family of functions $(\sigma_\alpha)_{\alpha>0}$ by the formula
\be
\sigma_\alpha(\tau):= \int_{\R} \exp(-\alpha |\lambda|) e^{i \lambda \tau}\cFa\sigma(\lambda)\:d\lambda.
\ee
It is  proved in Appendix A (see Lemma A.1) that the family $(\sigma_\alpha)_{\alpha>0}$ converges towards $\sigma$, as $\alpha\to 0$, in $L^\infty([0,T_1]\times [0,T_2]\times E, L^2(\T^2))$ for all $T_1,T_2>0$. In order to simplify the presentation, we assume from now on that \textbf{$\sigma$ only has a finite number of horizontal Fourier modes}. This is not crucial in the convergence proof, but the non-resonance conditions on $\sigma$ are somewhat simpler to formulate in this case. We refer to Remark \ref{rem:regul_sigma} for more general assumptions.

From now on, we assume that for all $\alpha>0$, $T>0$, the function $\cFa \sigma $ belongs to $L^\infty([0,T]\times E\times \T^2, L^1\cap L^\infty(\R_\lambda))$, and that
the following non-resonance hypotheses hold:
\begin{itemize}
 \item[\bf(H1)] For all $T>0$, 
$$
\forall T>0,\quad\sup_{\alpha>0}||\cFa \sigma||_{L^\infty( [0,T]\times E\times \T^2, L^1(\R_\lambda))}<+\infty.
$$

\item[\bf(H2)] There exist neighbourhoods $V_\pm$ of $\pm 1$, independent of $\alpha>0$,  such that
$$
\forall T>0,\quad\lim_{\alpha\to 0}\sup_{\lambda\in V_{+}\cup V_-}\|\cFa \sigma(\lambda)\|_{L^\infty([0,T]\times E\times \T^2)}=0.
$$

\end{itemize}

We refer to Remark \ref{rem:H1H2} below for some details about  the meaning of hypotheses {\bf (H1)-(H2)} for almost periodic functions. The interested reader may also consult  \cite{Lannes} for a treatment a resonance phenomena for functions with a continuous spectrum; notice however that the context in \cite{Lannes} is somewhat different, since it deals with functions whose Fourier transform is well-defined. 

Let us now explain how random oscillations are filtered:
\begin{proposition}
Let $\phi \in L^\infty(\R_\tau, L^2(E))$ be stationary, and let $\lambda\in\R$. Then the family
$$
\phi^{\lambda}_\theta:\om\in E\mapsto\frac{1}{\theta}\int_0^\theta \phi(\tau,\om) e^{-i\lambda\tau}d\tau,\quad \theta>0
$$
converges, almost surely and in $L^2(E)$, towards a function denoted by $\cEl [\phi]\in L^2(E)$  as $\theta\to\infty$. Moreover, $\cEl[\phi]$ satisfies the identity
$$
\cEl[\phi](\theta_\tau\om)= \cEl[\phi](\om) e^{i\lambda \tau}
$$
almost surely in $\om$, for all $\tau\in\R$.

Additionally, if $\sigma $ satisfies {\bf (H1)-(H2)}, then
\be
\cEl[\sigma]=0\label{nonres_cEL}
\ee
for $\lambda$ in a neighbourhood of $\pm 1.$

\label{prop:ergodic}\end{proposition}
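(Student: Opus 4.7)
\emph{Plan.}
For the first two statements of the proposition I would work in the Koopman formalism. By stationarity, $\phi(\tau, \omega) = (U_\tau f)(\omega)$ with $f := \phi(0, \cdot) \in L^2(E)$ and $U_\tau g := g \circ \theta_\tau$ the strongly continuous unitary group on $L^2(E)$ induced by the flow, so that
\[\phi^\lambda_\theta = \frac{1}{\theta}\int_0^\theta V_\tau f \, d\tau, \qquad V_\tau := e^{-i\lambda\tau}U_\tau.\]
Von Neumann's mean ergodic theorem applied to the unitary group $(V_\tau)$ immediately yields $L^2(E)$-convergence of $\phi^\lambda_\theta$ to $\cEl[\phi] := E(\{\lambda\})f$, the spectral projection of $f$ onto the $\lambda$-eigenspace of the generator of $(U_\tau)$, where $U_\tau = \int_\R e^{i\mu\tau}\, dE_\mu$ is the spectral resolution. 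For the almost-sure statement I would use the standard suspension: on $\tilde E := E \times (\R/2\pi\Z)$ with product measure, the flow $T_\tau(\omega, s) := (\theta_\tau\omega, s + \lambda\tau \bmod 2\pi)$ is measure-preserving, and the bounded function $\tilde\phi(\omega, s) := f(\omega)e^{-is}$ satisfies $\tilde\phi \circ T_\tau(\omega, s) = e^{-is}\phi(\tau,\omega)e^{-i\lambda\tau}$. Birkhoff's continuous-time ergodic theorem applied to $\tilde\phi$, combined with a Fubini argument in $s$ (the $s$-dependence being a mere scalar rotation), then transfers the almost sure convergence to $\phi^\lambda_\theta(\omega)$. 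The equivariance $\cEl[\phi](\theta_\tau \omega) = e^{i\lambda\tau}\cEl[\phi](\omega)$ is automatic from the spectral definition since $E(\{\lambda\})f$ is by construction an eigenvector of $U_\tau$ with eigenvalue $e^{i\lambda\tau}$, and can alternatively be obtained directly from the change of variable $s \to s + \tau$ in the ergodic average, the boundary defect being $O(|\tau|/\theta)$ because $\phi \in L^\infty$.

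For the non-resonance identity \eqref{nonres_cEL} I would exploit the same spectral picture. Inserting $U_\tau f = \int e^{i\mu\tau}\, dE_\mu f$ into the definition of $\cFa \sigma$ and applying Fubini---justified by the exponential damping together with (H1)---one obtains
\[\cFa\sigma(\lambda) = \int_\R P_\alpha(\mu - \lambda) \, dE_\mu f, \qquad P_\alpha(x) := \frac{1}{\pi}\frac{\alpha}{\alpha^2 + x^2},\]
and hence, via the Parseval identity in the scalar spectral measure $\mu_f$ of $f$,
\[\|\cFa\sigma(\lambda)\|^2_{L^2(E)} = \int_\R P_\alpha(\mu-\lambda)^2 \, d\mu_f(\mu) \geq P_\alpha(0)^2\, \mu_f(\{\lambda\}) = \frac{\mu_f(\{\lambda\})}{\pi^2\alpha^2}.\]
Since $m_0$ is a probability measure, $\|\cdot\|_{L^2(E)} \leq \|\cdot\|_{L^\infty(E)}$, so (H2) forces $\mu_f(\{\lambda\}) \leq \pi^2 \alpha^2 \|\cFa\sigma(\lambda)\|^2_{L^\infty(E)} \to 0$ uniformly on $V_+ \cup V_-$; hence $\mu_f(\{\lambda\}) = 0$, which by the spectral identification from the first step means $\|\cEl[\sigma]\|^2_{L^2(E)} = 0$, proving \eqref{nonres_cEL}.

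The point I expect to be most delicate is the commutation of Bochner and spectral integrals that produces the Poisson-kernel representation of $\cFa\sigma$; this rests on the strong continuity of $(U_\tau)$, a mild measurability requirement on the flow $(\theta_\tau)$ implicit in the stationary framework, together with (H1) for the integrability. Once this identity is available, (H2) carries the essential non-resonance content by controlling the Poisson boundary values of the spectral measure of $\sigma$ near $\pm 1$ and forcing any point mass there to vanish.
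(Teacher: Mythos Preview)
Your treatment of the convergence and of the equivariance identity follows essentially the same route as the paper: the paper also passes to the extended space $E\times[0,2\pi)$ with the skew-product flow $(\omega,\varphi)\mapsto(\theta_\tau\omega,\varphi-\lambda\tau)$, applies Birkhoff to $\Phi(\omega)e^{i\varphi}$, and reads off both the almost-sure/$L^2$ limit and the relation $\cEl[\phi](\theta_\tau\omega)=e^{i\lambda\tau}\cEl[\phi](\omega)$ from the invariance of the Birkhoff limit. Your added spectral identification $\cEl[\phi]=E(\{\lambda\})f$ is a nice bonus but not used by the paper.

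For \eqref{nonres_cEL} your argument is genuinely different from the paper's. The paper works entirely in physical variables: it splits $\sigma=(\sigma-\sigma_\alpha)+\sigma_\alpha$, controls the first piece uniformly on $[0,\theta]$ via Lemma~A.1, and for the second piece writes $\sigma_\alpha$ as $\int e^{-\alpha|\mu|}e^{i\mu\tau}\cFa\sigma(\mu)\,d\mu$, integrates in $\tau$ first to obtain the Dirichlet-type kernel $(e^{i(\mu-\lambda)\theta}-1)/(i(\mu-\lambda)\theta)$, and then uses \textbf{(H1)} for the region $|\mu\mp1|\geq\eta_0$ and \textbf{(H2)} for the region $|\mu\mp1|<\eta_0$; finally one lets $\theta\to\infty$ and then $\alpha\to 0$. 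Your approach instead stays on the spectral side: the identity $\cFa\sigma(\lambda)=\int P_\alpha(\mu-\lambda)\,dE_\mu f$ together with $\|\cEl[\sigma]\|_{L^2(E)}^2=\mu_f(\{\lambda\})$ reduces the claim to the single inequality $\mu_f(\{\lambda\})\leq\pi^2\alpha^2\|\cFa\sigma(\lambda)\|_{L^2(E)}^2$, after which \textbf{(H2)} alone suffices. This is shorter and conceptually cleaner---it isolates exactly which part of the hypotheses is doing the work (point masses of the spectral measure near $\pm1$ must vanish), and it does not need \textbf{(H1)} nor the approximation lemma. The paper's argument, on the other hand, is more self-contained in that it never invokes the spectral theorem for unitary groups, and yields the slightly finer information that the ergodic averages vanish \emph{uniformly} in $\lambda$ over the neighbourhood and in the $L^\infty([0,T]\times\T^2\times E)$ norm, not just in $L^2(E)$ at each fixed $(t,x_h,\lambda)$.
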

Proposition \ref{prop:ergodic} is proved in Appendix B, except property \eqref{nonres_cEL}, which will be proved in the course of the proof page \pageref{nonres_barsigma}.

With the above definition of $\cEl$, the source term $S_T$ is defined by
$$
S_T(\sigma)(t)=-\frac{1}{\sqrt{aa_1a_2}}\sum_{k\in\Z^3}\mathbf 1_{k_h\neq 0}\frac{(-1)^{k_3}}{|k_h'|}\left( \lambda_k k'_h +i(k_h')^\bot\right)\cdot \cE_{-\lambda_k}\left[ \hat \sigma(t,\cdot,k_h)  \right] N_k,
$$
where
$$
\hat \sigma(t,\tau,k_h;\om)=\int_{\T^2} \sigma(t,x_h;\om)e^{-ik_h'\cdot x_h}\:dx_h.
$$
Notice that $S_T(\sigma)$ is a random function in general, and is well-defined in $L^\infty_\text{loc}([0,\infty)\times E,L^2(\U))$ thanks to {\bf (H1)-(H2)} provided $\sigma\in L^\infty([0,T]\times[0,\infty)\times \T^2\times E)$ for all $T>0$.

\vskip1mm

$\bullet$ We now state an existence result for the limit system, based on the analysis in \cite{CheminDGG}. To that end, we introduce the anisotropic Sobolev spaces $H^{s,s'}$ by
$$
H^{s,s'}:=\left\{ u\in L^2(\U),\ \forall \alpha\in\N^3,\  |\alpha_h|\leq s, |\alpha_3|\leq s',\ \nabla_h^{\alpha_h} \p_z^{\alpha_3} u\in L^2(\U)\right\}.
$$
Then the following result holds:

\begin{proposition}Let $\nu,\e,\beta>0$ be arbitrary.

 Let $u_0\in \cH \cap H^{0,1},$ and let $\sigma \in L^\infty_\text{loc}([0,\infty)_t,L^\infty( [0,\infty)_\tau\times \T^2\times E)).$

Assume that the hypotheses {\bf (H1)-(H2)} hold. 

Then $S_T(\sigma)\in L^\infty_\text{loc}([0,\infty)_t, L^\infty(E , H^{0,1})),$ and consequently, the equation
\be
\begin{array}{l}
\ds \p_t w + \bar Q(w,w) - \Delta_h w + \sqrt{\frac{\nu}{\e}}S_B(w) + \nu\beta S_T(\sigma)=0,\\
\ds w_{|t=0}=u_0
\end{array}
\label{eq:lim}\ee
has a unique  solution $w\in L^\infty(E, \mathcal C([0,\infty), \cH\cap H^{0,1}))$ such that $\nabla_h u$ belongs to $ L^\infty(E, L^2_\text{loc}([0,\infty), H^{0,1})) $.

\label{prop:eqlim}
\end{proposition}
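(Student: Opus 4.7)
My plan is to prove the proposition in three steps: first establish the regularity of the forcing $S_T(\sigma)$ by exploiting the non-resonance condition (H2), then for each fixed $\om \in E$ solve the resulting deterministic envelope equation using the machinery developed in \cite{CheminDGG} for the unforced setting, and finally check measurability in $\om$ and uniformity of the a priori estimates.

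\textbf{Step 1 (regularity of $S_T(\sigma)$).} Since $\sigma$ has only finitely many horizontal Fourier modes, the set $\mathcal M := \{k_h \in \Z^2 : \hat\sigma(\cdot,\cdot,k_h) \not\equiv 0\}$ is finite. For each $k_h \in \mathcal M \setminus \{0\}$ one has $\lambda_k = -k_3'/|k'| \to \pm 1$ as $|k_3| \to \infty$, so there exists $K(k_h) > 0$ such that $-\lambda_k \in V_+ \cup V_-$ whenever $|k_3| > K(k_h)$. By Proposition \ref{prop:ergodic} and (H2), $\cE_{-\lambda_k}[\hat\sigma(t,\cdot,k_h)] = 0$ for such $k$. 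Hence the sum defining $S_T(\sigma)$ is in fact supported on the finite set $\{k : k_h \in \mathcal M \setminus \{0\},\ |k_3| \leq K(k_h)\}$. Since its coefficients are bounded pointwise by $\|\sigma\|_{L^\infty}$, $S_T(\sigma)$ is a finite linear combination of modes $N_k$ with coefficients in $L^\infty_{\text{loc}}(t, L^\infty(E))$, and therefore lies in $L^\infty_{\text{loc}}(t, L^\infty(E, H^{s,s'}))$ for every $s,s' \geq 0$.

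\textbf{Step 2 (pathwise solution).} For each $\om \in E$, equation \eqref{eq:lim} reads
\[
\p_t w + \bar Q(w,w) - \Delta_h w + \sqrt{\nu/\e}\,S_B(w) = F(t,\om),\qquad w_{|t=0}=u_0,
\]
with $F(t,\om) := -\nu\beta S_T(\sigma)(t,\om)$ smooth in $x$ and bounded in $\om$ by Step~1. This is precisely the envelope equation of \cite{CheminDGG} with a smooth forcing. I would set up a Galerkin scheme on $\mathrm{Span}(N_k)_{|k|\leq N}$ and close the standard energy estimates: $(\bar Q(w,w),w)_{L^2} = 0$ by the resonance structure, $S_B$ is non-negative, and $-\Delta_h$ yields an $L^\infty_t L^2 \cap L^2_t H^1_h$ a priori bound; for the $H^{0,1}$ bound one differentiates in $z$ and invokes the anisotropic trilinear estimate for $\bar Q$ established in \cite{CheminDGG}, then applies Gronwall's inequality. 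Passage to the limit in $N$ is classical, and uniqueness follows from an analogous energy estimate on the difference of two solutions.

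\textbf{Step 3 (measurability and uniformity).} Pathwise uniqueness ensures that the Galerkin approximants $w_N(\cdot;\om)$, which solve a finite-dimensional ODE whose coefficients depend measurably on $\om$ through $\mean{N_k, F(t,\om)}$, pass to a measurable limit. The a priori estimates depend on $\om$ only through $\|F(\cdot,\om)\|_{L^\infty_t H^{0,1}}$, which is uniformly bounded in $\om$ by Step~1; this yields both the claimed regularity $w \in L^\infty(E, \mathcal C([0,\infty), \cH \cap H^{0,1}))$ and the $L^2_{\text{loc}}$ control of $\nabla_h w$ in $H^{0,1}$. The main difficulty concentrates in Step~2, namely the propagation of the $H^{0,1}$ norm under the quadratic term $\bar Q(w,w)$: this is the point where the non-trivial anisotropic estimates of \cite{CheminDGG} are used in an essential way, the forcing term being harmless by Step~1.
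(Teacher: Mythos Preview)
Your proposal is correct and follows essentially the same approach as the paper: the paper also reduces the regularity of $S_T(\sigma)$ to the vanishing of $\cE_{-\lambda_k}[\hat\sigma]$ for $\lambda_k$ near $\pm 1$ (via Proposition~\ref{prop:ergodic} and {\bf (H2)}), which makes the defining sum finite, and then defers the existence and uniqueness argument to Proposition~6.5 of \cite{CheminDGG}, noting that the smooth source term $S_T(\sigma)$ causes no additional difficulty in the anisotropic energy estimates. Your Steps~2--3 spell out what the paper leaves implicit (Galerkin scheme, measurability in $\om$, uniformity of the bounds), but the substance is the same.
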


\begin{remark} {\bf (i) }  Notice that the function $w$ is random in general because of the source term $S_T$.\\
{\bf (ii)} In \cite{CheminDGG}, Proposition \ref{prop:eqlim} is proved for $S_T=0$ (see Proposition 6.5 p. 145). As stressed by the authors, the result is non trivial since the system \eqref{eq:lim} is similar to a three-dimensional Navier-Stokes equation, with a vanishing vertical viscosity. The proof relies on two arguments: first,  a careful analysis of the structure of the quadratic term $\bar Q$ shows that the limit equation is in fact close to a two-dimensional one. Second, the divergence-free property enables one to recover estimates on the vertical derivatives on the third component of the velocity field, and thus to bypass the difficulties caused by the lack of smoothing in the vertical direction.

In fact, the proof of Proposition \ref{prop:eqlim} can easily be adapted from the one of Proposition 6.5 in \cite{CheminDGG}, and is therefore left to the reader. The method remains exactly the same, the only difference being the presence of the source term $S_T$ in the energy estimates. This does not rise any particular difficulty, thanks to the assumptions on~$\sigma$.

\end{remark}

\subsection{Convergence result}~

\begin{theorem}Assume that $\nu=\mathcal O(\e)$, and that $\sqrt{\e\nu}\beta=\mathcal O(1)$.

Let $\sigma \in W^{1,\infty}([0,\infty)_\tau\times [0,\infty)_t, L^\infty( \T^2\times E))$ such that {\bf(H1)-(H2)} are satisfied.

Let $\ug\in L^\infty(E, \mathcal C([0,\infty), L^2)\cap L^2_\text{loc}([0,\infty), H^1))$ be a weak solution of \eqref{eq:depart}, supplemented with the conditions \eqref{CL} and the initial data $\ug_{|t=0}=u_0\in \cH\times H^{0,1}.$
Let $w$ be the solution of \eqref{eq:lim}. Then for all $T>0$,
$$
\ug - \exp\left( -\frac{t}{\e}L \right) w \to 0 
$$
in $L^2 ([0,T]\times E, H^{1,0})\cap L^\infty([0,T], L^2(E\times \U)).$

\label{thm:conv}

\end{theorem}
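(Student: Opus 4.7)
The plan is to proceed via the classical construction-plus-energy-estimate strategy pioneered in \cite{CheminDGG, MasmoudiCPAM, forcage-resonant}, but adapted to handle the random stationary boundary forcing. Concretely, I would build a three-scale approximate solution $\uapp$ of the form
\[
\uapp(t,x) \;=\; \cL(t/\e)\, w(t,x) \;+\; \ubl_B(t, t/\e, x_h, z/\sqrt{\nu\e};\om) \;+\; \ubl_T(t, t/\e, x_h, (a-z)/\sqrt{\nu\e};\om) \;+\; \text{corrector},
\]
where $\ubl_B$ is the usual Ekman layer at the bottom correcting the no-slip condition for the interior part $\cL(t/\e)w$, and $\ubl_T$ is the stationary/random boundary layer at the surface which carries the wind-stress condition $\p_z u_h = \beta\sigma^\e$. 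The corrector restores the divergence-free and no-flux conditions. The interior envelope $w$ is chosen as the solution of \eqref{eq:lim} furnished by Proposition \ref{prop:eqlim}, so that $\cL(t/\e)w$ already absorbs, by construction, the resonant/averaged parts of the nonlinearity ($\bar Q$), of the bottom Ekman pumping ($\sqrt{\nu/\e}\,S_B$), and of the top forcing ($\nu\beta S_T(\sigma)$).

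The delicate part is the construction and analysis of $\ubl_T$. I would first write, for each frequency $\lambda$, an explicit rotating-Ekman profile in the stretched variable $Z=(a-z)/\sqrt{\nu\e}$ forced by $e^{i\lambda t/\e}\cFa\sigma(\lambda)$, solving an ODE of the form $-\p_Z^2 \phi + (i\lambda + iJ)\phi = 0$ with Neumann data $\cFa\sigma(\lambda)$, where $J$ encodes the Coriolis action on horizontal components. Integrating in $\lambda$ against $\cFa\sigma$ yields a boundary layer profile $\ubl_{T,\alpha}$ associated with the regularization $\sigma_\alpha$. Hypothesis \textbf{(H1)} ensures $L^1_\lambda$ summability uniformly in $\alpha$, while \textbf{(H2)} prevents the ODE from becoming singular near the resonant frequencies $\lambda=\pm 1$ where $i\lambda+iJ$ has a kernel. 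Passing $\alpha\to 0$ via Proposition \ref{prop:ergodic} and Lemma A.1 provides $\ubl_T$ itself; the filtered trace of the associated Ekman pumping through the surface is precisely the term $-\nu\beta S_T(\sigma)$ appearing in \eqref{eq:lim}, where the coefficients $\cE_{-\lambda_k}[\hat\sigma(\cdot,k_h)]$ arise from testing against the eigenmodes $N_k$. I would check on this step the identity \eqref{nonres_cEL} announced in Proposition \ref{prop:ergodic}.

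Next I would verify that $\uapp$ solves \eqref{eq:depart}--\eqref{CL} up to a remainder $r^{\e,\nu}$ which is small in $L^2([0,T];H^{-1})+L^1([0,T];L^2)$ of order $o(1)$ as $\e,\nu\to 0$ under the scaling $\nu=\cO(\e)$, $\sqrt{\e\nu}\beta=\cO(1)$. The contributions to $r^{\e,\nu}$ come from: (i) the nonlinear interaction terms in $\cL(t/\e)w\otimes \cL(t/\e)w$ that are not captured by $\bar Q$, which vanish weakly in time by non-stationary phase / ergodic averaging (using that the subset $\cK_m$ selects the resonant modes); (ii) boundary-layer self-interaction and interaction with the interior, which are $\cO(\sqrt{\nu\e})$ in $L^2$ thanks to the localization of $\ubl_{B,T}$ on a layer of thickness $\sqrt{\nu\e}$; (iii) $\nu \p_z^2$ applied to the interior part, which is $\cO(\nu)$; (iv) the Neumann defect at the surface, which is absorbed by $\ubl_T$ up to the regularization error controlled by Lemma A.1.

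Finally, I would run an energy estimate on $R^{\e,\nu}:=\ug-\uapp$. Testing the equation satisfied by $R^{\e,\nu}$ against $R^{\e,\nu}$, the Coriolis term drops out because $(e_3\wedge R)\cdot R=0$, the pressure disappears by divergence-freeness, and the nonlinear contribution $\int (R\cdot \nabla \uapp)\cdot R$ is controlled by $\|\nabla_h \uapp\|_{L^\infty}\|R\|_{L^2}^2$ plus the usual $H^1$ horizontal terms absorbed in the dissipation $\|\nabla_h R\|_{L^2}^2$; the anisotropic structure of $w$ provided by Proposition \ref{prop:eqlim} (i.e. $\nabla_h w\in L^2_tH^{0,1}$) ensures the required $L^\infty$-type bounds on $\uapp$ exactly as in \cite{CheminDGG}. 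Applying Grönwall and the smallness of $r^{\e,\nu}$, and using the initial condition $R^{\e,\nu}|_{t=0}\to 0$, gives $R^{\e,\nu}\to 0$ in $L^\infty_t L^2\cap L^2_t H^{1,0}$; taking the expectation in $\om\in E$ concludes.

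The main obstacle I anticipate is the rigorous construction and control of $\ubl_T$ from the spectral data $\cFa\sigma$: namely, showing that the boundary layer ansatz obtained by superposing monochromatic Ekman profiles converges, when $\alpha\to 0$, to an object that is stationary in $\tau$, has the correct trace, and whose filtered pumping is $S_T(\sigma)$ as defined in the paper. This is exactly where hypotheses \textbf{(H1)}--\textbf{(H2)} are used to avoid resonant blowup and secure uniform integrability in the frequency variable.
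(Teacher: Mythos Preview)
Your overall architecture is right, but there are two genuine gaps that would make the energy estimate fail as written.

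\textbf{The oscillating nonlinear remainder.} You say that the part of $\cL(t/\e)w\otimes\cL(t/\e)w$ not captured by $\bar Q$ ``vanishes weakly in time by non-stationary phase / ergodic averaging''. Weak convergence is not enough: in the final energy estimate you need the remainder $r^{\e,\nu}$ to be small in $L^2_tL^2_x$ or $L^2_tH^{-1}_x$ \emph{strongly}, otherwise Gr\"onwall gives nothing. The paper deals with this by introducing an additional interior corrector $\delta u^{\text{int}}$ (and in fact also $v^{\text{int}}$ lifting the Ekman-suction traces) defined so that $\e^{-1}\bigl(\p_\tau\delta u^{\text{int}}+L\,\delta u^{\text{int}}\bigr)$ exactly cancels the non-resonant oscillations $Q(\tau,w,w)-\bar Q(w,w)$ together with the oscillating part of the pumping term. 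One then shows $\delta u^{\text{int}}$ is itself $o(1)$ (Lemma~\ref{lem:bound_deltauint}). Your ``corrector restores the divergence-free and no-flux conditions'' covers $v^{\text{int}}$ but not this $\delta u^{\text{int}}$; without it the interior error is $\cO(1)$ in $L^2$. Relatedly, $w$ as given by Proposition~\ref{prop:eqlim} is not smooth enough to build these correctors directly: the paper first replaces $w$ by a truncated/mollified $w^\delta_{n,N}$, and the limit $\delta\to 0$ is taken only at the very end.

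\textbf{The vertical derivative in the energy estimate.} Your bound $\int (R\cdot\nabla\uapp)\cdot R \lesssim \|\nabla_h\uapp\|_{L^\infty}\|R\|_{L^2}^2+\ldots$ ignores the term $\int R_3\,\p_z\uapp\cdot R$. Inside the boundary layers $\p_z\ubl\sim(\e\nu)^{-1/2}$, so no $L^\infty$ bound on $\p_z\uapp$ is available, and there is no vertical viscosity left (you have $\nu\|\p_z R\|^2$, not $\|\p_z R\|^2$) to absorb it. The paper's remedy is to write $R_3\p_z\ubl = (R_3/z)\cdot(z\p_z\ubl)$ near $z=0$ (and symmetrically near $z=a$), use Hardy's inequality and $\p_z R_3=-\dv_h R_h$ to control $\|R_3/z\|_{L^2}$ by $\|R\|_{H^{1,0}}$, and observe that $\|z\p_z\ubl\|_{L^\infty}$ is bounded uniformly because the layer lives where $z\sim\sqrt{\e\nu}$. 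This step is essential and should be made explicit.

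As a smaller remark, your proposed construction of $\ubl_T$ by superposing monochromatic Ekman ODE solutions against $\cFa\sigma$ is a legitimate alternative, but it differs from what the paper does: there the layer is built directly as a time-convolution of $\sigma$ with an explicit heat-type kernel damped by $e^{-\delta s}$, and the spectral operator $\cFa$ enters only when proving the uniform-in-$\delta$ estimates (Proposition~\ref{prop:estuT0}). Either route should work under \textbf{(H1)--(H2)}, but the convolution form makes stationarity in $\tau$ automatic.
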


In the case of the nonresonant torus (see \eqref{hyp:tor_nonres} below), it is likely that the hypothesis $\nu=\mathcal O(\e)$ can be relaxed. Indeed, in this case, the equation on $w$ decouples between a nonlinear equation on the vertical average of $w$ on the one hand, and a linear equation on the vertical modes of $w$ on the other  (see paragraph \ref{ssec:mean_bhv} below, together with Section \ref{sec:mean_bhv}). Moreover, it can be proved that the purely horizontal modes of $w$ decay exponentially in time at a rate $\exp(-t\sqrt{\nu/\e})$, and the rate of decay does not depend on the particular horizontal mode considered. Thus, in this particular case, the regime $\nu\gg\e$ may be investigated, using arguments similar to those developed in \cite{forcage-resonant}.

\begin{remark}
 \label{rem:regul_sigma}
In fact, the above Theorem remains true even when the number of horizontal Fourier modes of $\sigma$ is infinite. In this case, it can be checked that the non-resonance assumptions \textbf{(H1)-(H2)} have to be replaced by the following: there exists $s>4$ such that
\begin{itemize}
 \item[\bf(H1')] For all $\alpha>0$, $T>0$, $\cFa \sigma\in L^\infty( [0,T]\times E, L^1(\R_\lambda, H^s(\T^2)))$, and
$$
\forall T>0,\quad\sup_{\alpha>0}||\cFa \sigma||_{L^\infty( [0,T]\times E, L^1(\R, H^s(\T^2)))}<+\infty.
$$

\item[\bf(H2')] There exist neighbourhoods $V_\pm$ of $\pm 1$, independent of $\alpha>0$,  such that
$$
\forall T>0,\quad\lim_{\alpha\to 0}\sup_{\lambda\in V_{+}\cup V_-}\|\cFa \sigma(\lambda)\|_{L^\infty([0,T]\times E, H^s(\T^2))}=0.
$$

\end{itemize}
The $H^4$ regularity assumption stems from the regularity required in the stopping Lemma A.2 in the Appendix.

Furthermore, the regularity assumptions on the function $\sigma $ become 
$$
\sigma \in L^\infty_{\text{loc}}([0,\infty)_t, L^\infty([0,\infty)_\tau\times E, H^{3/2}(\T^2)),\quad \p_\tau \sigma\in H^1(\T^2, L^\infty([0,\infty)_t\times [0,\infty)_\tau\times E)).
$$

\end{remark}

\begin{remark}\label{rem:H1H2}
Let us now explain the meaning of hypotheses {\bf (H1)-(H2)} for almost periodic functions. Let $k_h\in\Z^2$, and let $\phi\in L^\infty([0,\infty)\times \T^2)$ such that
$$
\phi(\tau,x_h)=e^{ik_h'\cdot x_h}\sum_{\mu\in M }\phi_\mu e^{i\mu\tau},
$$
where $M$ is a countable set. The fact that $\phi$ as only one horizontal Fourier mode is not crucial, but merely helps  focusing on the time spectrum.
Then it can be checked easily that for all $\alpha>0$,
$$
\cFa \phi(\lambda,x_h)=\frac{1}{2\pi}e^{ik_h'\cdot x_h}\sum_{\mu\in M}\phi_\mu\frac{2 \alpha}{\alpha^2 + (\mu-\lambda)^2}.
$$
In particular,  there exists a constant $C>0$ such that 
\begin{eqnarray*}
\| \cFa \phi\|_{L^\infty(\T^2, L^1(\mathbb R_\lambda))}&\leq  &C \sum_{\mu\in M} \left|\phi_\mu  \right|\int_{\R}\frac{2\alpha}{\alpha^2 + (\mu-\lambda)^2}d\lambda\\
&\leq & C\sum_{\mu \in M} \left|\phi_\mu  \right|.
\end{eqnarray*}
Thus hypothesis {\bf (H1)} is satisfied provided $\sum_{\mu\in M}\left|\phi_\mu  \right|<\infty.$

On the other hand, assume that 
\be
\eta:=d(M,\{-1,1\})>0,\label{hyp:freq_phi}
\ee
i.e. that there are no frequencies in a neighbourhood of $\pm 1$. Then if $\lambda \in (-1-\eta/2, -1 + \eta/2) \cup(1-\eta/2, 1 + \eta/2) $, we have
$$
|\lambda-\mu|\geq \frac{\eta}{2}\quad\forall \mu\in M,
$$
and consequently, setting $V^\pm:=\left(\pm1-{\eta}/{2}, \pm 1 + {\eta}/{2}\right)$, we have,
$$
\sup_{\lambda\in V^- \cup V^+}
\left\|\cFa\phi(\lambda)\right\|_{L^\infty(\T^2)}\leq C \frac{\alpha}{\eta}.
$$
Thus hypothesis \eqref{hyp:freq_phi} entails {\bf (H2)}. Additionally, hypothesis \eqref{hyp:freq_phi} cannot be easily relaxed, as shows the following construction: consider the sequence $\mu_n:=1- 1/n, $ and choose a sequence of positive numbers $\phi_n$ such that
$$
\sum_n \phi_n<\infty.
$$
For $\tau\in \R$, set
$$
\phi(\tau):= \sum_n \phi_n e^{i\mu_n \tau}.
$$
Then for all $\alpha>0$, for all $k>0$
$$
\cFa \phi (\mu_k)=\sum_n \phi_n\frac{2\alpha}{\alpha^2 + \left( \frac{1}{n}- \frac{1}{k} \right)^2}\geq \frac{2\phi_k}{\alpha}.
$$
In particular,
$$
\lim_{\alpha\to 0} \cFa \phi (\mu_k)=+\infty
$$
for all $k$, and thus condition {\bf (H2)} is not satisfied.
\end{remark}

\subsection{Average behaviour at the limit}
\label{ssec:mean_bhv}
We have already stressed that the solution $w$ of equation \eqref{eq:lim} is, in general, a random function. Thus one may wonder whether the average behaviour of $w$ at the limit can be characterized. In general, the nonlinearity of equation \eqref{eq:lim} prevents us from deriving an equation, or a system of equations, on the expectation of $w$, which we denote by $\bE[w]$. However, when the torus is non resonant, equation \eqref{eq:lim} decouples, and in this case we are able to exhibit a system of equations satisfied by $\bE[w].$

Let us first recall a few definitions:

\begin{definition}[Non-resonant torus] The torus $\T^3:= \T^2\times [-a,a)$ is said to be {\bf non-resonant} if the following property holds: for all $(k,n)\in \Z^3\setminus\{ 0\}\times\Z^3\setminus\{ 0\},$
 \be
\left( \exists \eta\in\{-1,1\}^{3}, \eta_1\lambda_k + \eta_2\lambda_{n-k} -\eta_3\lambda_n=0 \right)\Rightarrow k_3n_3(n_3-k_3)=0.
\label{hyp:tor_nonres}
\ee
\end{definition}

We refer to \cite{BMN} for a discussion of hypothesis \eqref{hyp:tor_nonres} and its consequences. Let us mention that \eqref{hyp:tor_nonres} holds for almost all values of $(a_1,a_2,a)\in (0,\infty)^3.$
When the torus is non-resonant,  the structure  of the quadratic form $\bar Q$ defined by \eqref{def:Q} is particularly simple, and the system \eqref{eq:lim} can be decoupled into a two-dimensional Navier-Stokes equation on the vertical average of $w$, and a linear equation on the $z$-dependent part (see \cite{CheminDGG}). The advantage of this decomposition in our case is that the vertical average of $\bar S_T(\sigma)$ is deterministic, at least when the group transformation $(\theta_\tau)_{\tau\geq 0}$ acting on $E$ is ergodic (see \cite{sinai}).

\begin{definition}[Ergodic transformation group] Let $(\theta_\tau)_{\tau\in \R}$ be a group of invariant transformations acting on the probability space $(E, \mathcal A, m_0)$. The group is said to be ergodic if for all $A\in \mathcal A$, 
$$
\left( \theta_\tau A\subset A\quad\forall \tau \in \R \right)\Rightarrow m_0(A)=0\text{ or }m_0(A)=1.
$$

\end{definition}

We now state the result on the average behaviour at the limit:
\begin{proposition}
Assume  that the transformation group $(\theta_\tau)_{\tau \in\R}$ is ergodic.

 \noindent Let $u_0\in \cH \cap H^{0,1},$ and let $\sigma \in L^\infty([0,\infty)_t,\times [0,\infty)_\tau\times E\times \T^2)$  such that
the hypotheses of Theorem \ref{thm:conv} are satisfied. Let $w\in L^\infty(E, \mathcal C([0,\infty), \cH\cap H^{0,1})\cap L^2_\text{loc}([0,\infty), H^{1,0})) $ be the unique solution of equation \eqref{eq:lim}.

Let $\bar w=(\bar w_h,0)\in\mathcal C([0,\infty), L^2(\T^2))\cap L^2_\text{loc}([0,\infty), H^1(\T^2))$ be the solution of the 2D-Navier-Stokes equation
$$
\begin{aligned}
&\p_t \bw_h + \bw_h \cdot \nabla_h \bw_h -\Delta_h \bw_h + \frac{1}{a\sqrt{2}}\sqrt{\frac{\nu}{\e}}\bw_h +\nu\beta  \bE\left[ S_{T}(\sigma)\right]_h=\nabla_h \bar p,\\
&\dv_h \bw_h=0,\\
&\bw_{h|t=0}(x_h)=\frac{1}{a}\int_0^{a}u_{0,h}(x_h,z)\:dz.
\end{aligned}
$$
Then the following properties hold:

\begin{enumerate}
 \item As $\e,\nu\to 0$ as in Theorem \ref{thm:conv}, we have
$$
\ug \rightharpoonup \bar w \quad \text{in }L^2([0,T]\times \U \times E).
$$
In particular, the weak limit of $\ug$ is a deterministic function.
\item Assume additionally that the torus $\T^3$ is non resonant. Then
$$
\bE [w]= \bar w + \tilde w,
$$
where $\tilde w$ solves a linear deterministic equation
$$
\begin{aligned}
&\ds\p_t\tilde w +2 \bar Q(\bar w, \tilde w)-\Delta_h\tilde w + \sqrt{\frac{\nu}{\e}} S_{B}(\tilde w) =0,\\
&\tilde w_{|t=0}=u_0- \bw_{|t=0}.
\end{aligned}$$
\end{enumerate}

\label{prop:meanbhv}
\end{proposition}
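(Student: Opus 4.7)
The plan is to exploit the strong approximation $\ug \simeq \exp(-tL/\e)w$ from Theorem~\ref{thm:conv} and pass to the weak limit by filtering out the fast oscillations. For Part~1, I write $\ug = \exp(-tL/\e)w + r^{\e,\nu}$ with $r^{\e,\nu}\to 0$ strongly in $L^2$, and expand spectrally
$$
\exp(-tL/\e)w = \sum_{k\in\Z^3}e^{-i\lambda_k t/\e}\mean{N_k,w}N_k.
$$
Testing against any $g\in L^2(E\times[0,T]\times\U)$, the terms with $\lambda_k\neq 0$ (equivalently $k_3\neq 0$) oscillate out as $\e\to 0$, so only the vertical-average component $w^{(0)}:=\sum_{k_3=0}\mean{N_k,w}N_k$ survives in the weak limit. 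It then remains to identify $w^{(0)}$ with $\bar w$: projecting \eqref{eq:lim} onto the horizontal modes, the structure of $\bar Q$ on $\{k_3=0\}$ (see \cite{CheminDGG}) yields the 2D nonlinearity $\bar w_h\cdot\nabla_h\bar w_h$, $S_B$ reduces to the Ekman damping of constant $(a\sqrt 2)^{-1}$, and $S_T(\sigma)|_{k_3=0}$ involves only $\cE_0[\hat\sigma]$, which by ergodicity coincides almost surely with $\bE[\hat\sigma]$; the source thus becomes the deterministic $\nu\beta\bE[S_T(\sigma)]_h$. Uniqueness for the 2D Navier--Stokes equation then forces $w^{(0)}=\bar w$, deterministic, and the weak convergence of $\ug$ follows.

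For Part~2 I decompose $w=\bar w + w^\perp$ with $w^\perp$ the projection on the modes $k_3\neq 0$. Under \eqref{hyp:tor_nonres}, the resonant set $\mathcal K_m$ with $m_3\neq 0$ only contains triples where at least one of the two interacting wave-numbers is horizontal (see~\cite{CheminDGG}), so $\bar Q(w^\perp,w^\perp)$ has no component on the $k_3\neq 0$ modes. Projecting \eqref{eq:lim} onto these modes therefore yields the equation, linear in $w^\perp$,
$$
\p_t w^\perp + 2\bar Q(\bar w,w^\perp) - \Delta_h w^\perp + \sqrt{\tfrac{\nu}{\e}}\,S_B(w^\perp) + \nu\beta\, S_T(\sigma)\big|_{k_3\neq 0} = 0,
$$
with $w^\perp|_{t=0}=u_0-\bar w|_{t=0}$. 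The stationarity identity $\cEl[\phi](\theta_\tau\om)=\cEl[\phi](\om)e^{i\lambda\tau}$ combined with the invariance of $m_0$ under $\theta_\tau$ forces $\bE[\cEl[\phi]]=0$ whenever $\lambda\neq 0$, hence $\bE[S_T(\sigma)]|_{k_3\neq 0}=0$. Taking expectation of the above equation and setting $\tilde w := \bE[w^\perp]=\bE[w]-\bar w$ one recovers the announced deterministic linear problem.

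The main obstacle is the combinatorial analysis of $\mathcal K_m$ needed in both steps: that $\bar Q$ restricted to $k_3=0$ reduces to the 2D Navier--Stokes bilinear form, and that under \eqref{hyp:tor_nonres} $\bar Q(w^\perp,w^\perp)$ vanishes on $\{k_3\neq 0\}$. Both rely on the explicit formulas for $N_k$ and $\alpha_{k,l,m}$ together with the non-resonance condition, and are the only genuinely nontrivial algebraic inputs. Everything else --- filtering out fast oscillations by weak $L^2_t$ convergence, ergodic averaging of $\cE_\lambda$, and existence/uniqueness for the 2D Navier--Stokes equation and the linear transport--diffusion equation on $\tilde w$ --- is essentially standard once these structural identities are in place.
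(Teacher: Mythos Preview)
Your proposal is correct and follows essentially the same route as the paper's proof in Section~\ref{sec:mean_bhv}: project \eqref{eq:lim} onto the horizontal modes, use the structural identity $P_h(\bar Q(w,w))=\bar w_h\cdot\nabla_h\bar w_h+\nabla_h\bar p$ from \cite{CheminDGG,MasmoudiCPAM} together with the ergodic identification $\cE_0[\hat\sigma]=\bE[\hat\sigma]$ a.s.\ to obtain the deterministic 2D equation for $\bar w$, then filter out the oscillations $e^{-i\lambda_k t/\e}$ for $k_3\neq 0$ to deduce the weak convergence; for Part~2, exploit \eqref{hyp:tor_nonres} to kill $\bar Q(w^\perp,w^\perp)$ and use $\bE[\cE_{-\lambda_k}[\hat\sigma]]=0$ for $\lambda_k\neq 0$ (which the paper records as \eqref{esp:sigma1}) before taking expectation of the resulting linear equation. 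The only cosmetic difference is that the paper states the slightly stronger fact $\bar Q(w-\bar w,w-\bar w)=0$ identically under non-resonance, whereas you only use that its projection on $\{k_3\neq 0\}$ vanishes; your weaker version is of course sufficient for the argument.
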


\subsection{Strategy of proof of Theorem \ref{thm:conv}}

The proof relies on the construction of an approximate solution, obtained as the sum of some interior terms - the largest of which is $\exp(-TL/\e ) w(t)$ - and some boundary layer terms which restore the horizontal boundary conditions violated by the interior terms. We refer to the works by N. Masmoudi \cite{Masmoudi1,MasmoudiCPAM}, N. Masmoudi and E. Grenier \cite{MasmoudiGrenier}, N. Masmoudi and F. Rousset \cite{MasmoudiRousset}, and F. Rousset \cite{Rousset} for an extensive study of boundary layers in rotating fluids, or in incompressible fluids  with vanishing vertical viscosity for \cite{Masmoudi1}. We emphasize that {\it in fine}, all terms will be small in $L^2$ norm, except $\exp(-TL/\e ) w(t)$. 

Following \cite{CheminDGG} (Chapter 7), let us assume that as $\e,\nu\to 0$,
\be
\begin{array}{l}
\ds 
\ug\approx \uint+ \ubl, \\
\ds\pg\approx \frac{1}{\e}p^{\text{int}}+ \frac{1}{\e}p^{\text{BL}} + p^{\text{int},0},
\end{array}
\label{Ansatz}\ee
where 
\begin{eqnarray*}
&& u^{\text{int}}(t,x_h,z)= U\left( t,\frac{t}{\e},x_h,z \right),\  p^{\text{int}}(t,x_h,z)= P\left( t,\frac{t}{\e},x,y,z \right), \\
&&u^{\text{BL}}(t,x_h,z)= u_T\left( t,\frac{t}{\e},x_h,\frac{a-z}{\eta} \right) +  u_B\left( t,\frac{t}{\e},x_h,\frac{z}{\eta} \right),\\
&&p^{\text{BL}}(t,x_h,z)= p_T\left( t,\frac{t}{\e},x_h,\frac{a-z}{\eta} \right)  + p_B\left( t,\frac{t}{\e},x_h,\frac{z}{\eta} \right).
\end{eqnarray*}
Above, $\eta$ is a small parameter that will be chosen later on. The function $u_T(t,\tau,x_h,\zeta)$ is assumed to vanish as $\zeta\to \infty$ (same  for $p_T,p_B,u_B$).

We then plug the Ansatz \eqref{Ansatz} into equation \eqref{eq:depart}, and identify the different powers of $\e$. In general, there is a \textbf{coupling between $\uint$ and $\ubl$}: indeed, we have seen that it is natural to expect that
$$
U(t,\tau)=\exp(-\tau L) w(t),
$$
at first order, and thus $u^{\text{int}}$ does not match the horizontal boundary conditions in general. As a consequence,
 the value of $\uint$ at the boundary has to be taken into account when constructing the boundary layer term $\ubl$. On the other hand, because of the divergence-free constraint, the third component of $\ubl$ does not vanish at the boundary, which means that a small amount of fluid may enter or leave the interior of the domain. This phenomenon is called \textbf{Ekman suction}, and gives rise to a source term (called the Ekman pumping term) in the equation satisfied by $\uint$. This leads to some sort of ``loop'' construction, in which the boundary layer and interior terms are constructed one after the other.

The first step of this  construction lies in the definition of boundary layer terms. In the periodic case, this is well-understood (see \cite{CheminDGG,MasmoudiCPAM,forcage-resonant}); thus the main contribution of this article in this regard lies in the definition of boundary layers in the random stationary case. Hence, Section \ref{sec:BL} is entirely devoted to that topic. Section \ref{sec:interior} is concerned with the definition of first and second order interior terms; in particular, we derive in paragraph \ref{ssec:envelope} the limit equation for the system \eqref{eq:depart}. In Section \ref{sec:CVproof}, we prove the convergence result, after defining some additional corrector terms. At last, we prove Proposition \ref{prop:meanbhv} in Section \ref{sec:mean_bhv}.

\section{Construction of random  boundary layer terms}
\label{sec:BL}

The goal of this section is to construct approximate solutions of equation \eqref{eq:depart}, which satisfy the horizontal boundary condition at $z=a$, and which are localized in the vicinity of the surface. Such a construction has already been achieved in the case when the function $\sigma$ is quasi-periodic with respect to the fast time variable (see \cite{MasmoudiCPAM} in the non-resonant case, and \cite{forcage-resonant} in the resonant case). Thus our goal is to extend this construction to a random forcing. The main result of this section is the following:

\begin{lemma}Assume that $\beta \sqrt{\e\nu}=\cO(1)$.
Let $\sigma\in W^{1,\infty}([0,\infty)_t\times [0,\infty)_\tau, L^\infty(\T^2\times E))$ be such that \textbf{(H1)-(H2)} are satisfied. Then for all $\delta>0$, there exists a function $u^{\text{BL},\delta}_T \in L^\infty([0,\infty)_t\times \U\times E)$ which satisfies the system
\begin{eqnarray*}
&&\p_t u^{\text{BL},\delta}_T + \frac{1}{\e} e_3\wedge u^{\text{BL},\delta}_T-\nu\p_z^2 u^{\text{BL},\delta}_T- \Delta_h u^{\text{BL},\delta}_T= \cO\left( \left(1+\frac{\delta}{\e}\right)(\e\nu)^{1/4}\|\sigma\| \right)_{L^\infty([0,\infty)\times  E, L^2(\U))},\\
&&\p_z u^{\text{BL},\delta}_{T, h|z=a}=\beta \sigma,\\
&&\dv u^{\text{BL},\delta}_T=0,
\end{eqnarray*}
and such that
$$
\sup_{\delta>0}\| u^{\text{BL},\delta}_T\|_\infty<\infty,\quad \sup_{\delta>0} (\e\nu)^{-1/4}\| u^{\text{BL},\delta}_T\|_{L^\infty([0,\infty)\times E, L^2(\U))}<+\infty.
$$

Moreover, $u^{\text{BL},\delta}_{T|z=0}$ is exponentially small.
\label{lem:def-uTBL}

\end{lemma}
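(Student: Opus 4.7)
The plan is to reduce the construction to a superposition of monochromatic Ekman-type profiles indexed by a frequency $\lambda$ and a horizontal Fourier mode $k_h$, then reassemble them into a stationary random object by means of the ergodic operator $\cEl$ furnished by Proposition \ref{prop:ergodic}. I start by approximating $\sigma$ by its smoothed spectral representation
\[
\sigma_\alpha(t,\tau,x_h;\om) = \int_\R e^{-\alpha|\lambda|}\, e^{i\lambda \tau}\, \cFa \sigma(t,\lambda,x_h;\om)\,d\lambda,
\]
with $\alpha$ to be chosen eventually as a function of $\delta$, and split $\sigma_\alpha$ into its (finitely many) horizontal Fourier modes.

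For each fixed pair $(k_h,\lambda)$ with $\lambda$ avoiding a neighbourhood of $\pm 1$, I look for a monochromatic profile
\[
u_{k_h,\lambda}(t,\tau,x_h,z) = \beta\, e^{ik_h'\cdot x_h}\, e^{i\lambda\tau}\, V_{k_h,\lambda}\!\left(\tfrac{a-z}{\sqrt{\e\nu}}\right),
\]
plug it into the linearised operator $\p_t + \e^{-1} e_3\wedge -\nu\p_z^2 -\Delta_h$, and retain the dominant boundary-layer balance of order $\e^{-1}$. Diagonalising $e_3\wedge$ in the basis $(1,\pm i)$ reduces the horizontal system to two scalar ODEs $\p_\zeta^2 v_\pm = i(\lambda\pm 1)v_\pm$, whose decaying solutions $\exp(-\sqrt{i(\lambda\pm 1)}\,\zeta)$ have characteristic exponents uniformly bounded away from $0$ precisely because of \textbf{(H2)}. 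Matching $\p_z V_{k_h,\lambda,h}|_{\zeta=0}$ with the $k_h$-th Fourier coefficient of $\sigma$ fixes the constants of integration, and $V_{k_h,\lambda,3}$ is obtained from $\dv V_{k_h,\lambda}=0$. Using the assumption $\beta\sqrt{\e\nu}=\cO(1)$, the profile is of order $1$ in $L^\infty$ and of order $(\e\nu)^{1/4}$ in $L^2(\U)$ because of the $\sqrt{\e\nu}$ layer thickness, and it is exponentially small at $z=0$.

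The randomisation step is the conceptual heart of the construction: for each $\lambda$, the function $\cE_\lambda[\hat\sigma(t,\cdot,k_h)]$ produced by Proposition \ref{prop:ergodic} is a stationary random object satisfying $\cE_\lambda[\phi](\theta_\tau\om) = e^{i\lambda\tau}\cE_\lambda[\phi](\om)$, which means it plays exactly the role of the pure exponential $e^{i\lambda\tau}$ in all the algebraic manipulations of the previous step. I then define
\[
u^{\text{BL},\delta}_T = \sum_{k_h}\int_\R \beta\, \cE_\lambda[\hat\sigma(t,\cdot,k_h)](\om)\, V_{k_h,\lambda}\!\left(\tfrac{a-z}{\sqrt{\e\nu}}\right)\, e^{ik_h'\cdot x_h}\, e^{-\alpha|\lambda|}\,\chi_\delta(\lambda)\,d\lambda,
\]
where $\chi_\delta$ is a smooth cutoff excising a neighbourhood of $\pm 1$ (which carries negligible mass by \eqref{nonres_cEL} and \textbf{(H2)}) as well as the set $|\lambda|\gtrsim 1/\delta$. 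Hypothesis \textbf{(H1)} guarantees that the $d\lambda$-integral is absolutely convergent with constants uniform in $\alpha$ and $\delta$, which yields the uniform $L^\infty$ and $L^2$ bounds stated in the lemma.

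It remains to verify the equation. The boundary condition $\p_z u^{\text{BL},\delta}_{T,h}|_{z=a}=\beta\sigma$ is recovered up to $\beta(\sigma-\sigma_\alpha)$ controlled by Lemma A.1, and this boundary residue is reabsorbed in a secondary corrector that is itself small in the appropriate norms. The residual in the equation has three sources: $-\Delta_h u^{\text{BL},\delta}_T$ of size $(\e\nu)^{1/4}$ in $L^2$, the slow time derivative $\p_t$ acting through the $W^{1,\infty}_t$ regularity of $\sigma$ of the same order, and the high-frequency tail beyond $|\lambda|\sim 1/\delta$, which contributes the factor $\delta/\e$ via the $\e^{-1}\p_\tau$ term. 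Exponential decay of the profiles in $\zeta$ gives the required smallness at $z=0$. The \textbf{main obstacle} is reconciling the continuous random spectrum with the Ekman structure: unlike the periodic or almost-periodic settings there is no discrete set of resonances to circumvent, so \textbf{(H2)} must be used in an essential way to keep the characteristic roots $\sqrt{i(\lambda\pm 1)}$ uniformly bounded below, while \textbf{(H1)} together with the stopping argument of Lemma A.2 is what promotes the formal spectral construction into a genuine $L^2$ estimate uniform in $\alpha$ and $\delta$.
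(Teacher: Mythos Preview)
There is a genuine gap in your construction, and it occurs precisely at what you call ``the conceptual heart'': the use of $\cE_\lambda[\hat\sigma]$ as a spectral amplitude. The operator $\cE_\lambda$ from Proposition~\ref{prop:ergodic} is an ergodic time average, not a Fourier transform. For a stationary process with purely continuous spectrum one has $\cE_\lambda[\phi]=0$ for \emph{every} $\lambda\in\R$, yet $\phi\not\equiv 0$; more generally $\cE_\lambda$ picks out only point spectrum. Consequently your formula
\[
\sum_{k_h}\int_\R \beta\, \cE_\lambda[\hat\sigma(t,\cdot,k_h)](\om)\, V_{k_h,\lambda}\!\left(\tfrac{a-z}{\sqrt{\e\nu}}\right)\, e^{ik_h'\cdot x_h}\, e^{-\alpha|\lambda|}\,\chi_\delta(\lambda)\,d\lambda
\]
does not reconstruct a function whose normal derivative at $z=a$ equals $\beta\sigma$; for a generic random $\sigma$ it is identically zero. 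You have confused $\cE_\lambda$ with $\cFa\sigma(\lambda)$, which is the object that actually integrates back (approximately) to $\sigma$. The claim that the boundary residue can be ``reabsorbed in a secondary corrector'' is therefore not a small perturbation argument but would have to reproduce the entire boundary condition from scratch.

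The paper avoids this issue by working in the \emph{time domain} rather than the frequency domain. It writes down the fundamental solution of $\p_\tau u - \p_\zeta^2 u + u^\perp = 0$ with Neumann data $\delta_0(\tau)$ at $\zeta=0$ (obtained from the heat kernel after the rotation $u\mapsto e^{\pm i\tau}(u_1\pm i u_2)$), inserts the damping factor $e^{-\delta s}$ to cure the non-integrability of $s^{-1/2}$ at infinity, and \emph{defines}
\[
u^\delta_{T,h}(t,\tau,x_h,\zeta;\om)=\frac{\beta\sqrt{\e\nu}}{\sqrt{4\pi}}\sum_\pm\int_0^\infty \frac{1}{\sqrt s}\,e^{-\zeta^2/4s}\,(\sigma\pm i\sigma^\perp)(t,\tau-s,x_h;\om)\,e^{(-\delta\pm i)s}\,ds.
\]
This is an honest convolution against a bounded stationary function, hence well-defined for every realisation $\om$, and it satisfies the Neumann condition at $\zeta=0$ \emph{exactly}. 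The parameter $\delta$ enters as a damping rate, producing the extra term $\delta u^\delta_{T,h}$ in the equation (whence the $\delta/\e$ in the error), not as a frequency cutoff. The approximate spectral decomposition $\sigma_\alpha$ and hypotheses \textbf{(H1)}--\textbf{(H2)} are used only \emph{a posteriori}, in the proof of the $L^\infty$ and $L^2$ bounds (Proposition~\ref{prop:estuT0}): one writes $\sigma=(\sigma-\sigma_\alpha)+\sigma_\alpha$, bounds the first piece by Lemma~A.1, and for the second piece integrates by parts in $s$ against $e^{-(\delta+i(\lambda\pm 1))s}$, using \textbf{(H2)} to control the contribution of $\lambda$ near $\pm 1$. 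Lemma~A.2 plays no role here; it appears only later, in Section~\ref{sec:CVproof}.
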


The above Lemma entails in particular that for all $\delta>0$, $ u^{\text{BL},\delta}_T$ is an approximate solution of \eqref{eq:depart}, which satisfies the appropriate horizontal boundary condition at $z=a$. The Lemma is proved in the two next paragraphs: we first explain how $ u^{\text{BL},\delta}_T$ is defined, and then we derive the $L^2$ and $L^\infty$ estimates.

\subsection{Construction of the boundary term at the surface}

As explained in the Introduction, the idea is to consider an Ansatz of the form $$u^{\text{BL},\delta}_T(t,x_h,z,\om)=
u_{T}\left( t,\frac{t}{\e}, x_h, \frac{a-z}{\eta};\om \right),$$
where $\eta$ is a small parameter (whose size has to be determined) and 
$$
\lim_{\zeta\to\infty}u_T(t,\tau,x_h,\zeta;\om)=0\quad \forall\ t,\tau,x_h,\om.
$$
Hence we expect $\ubl_h$ to be of order ${\eta}\beta||\sigma||_{\infty}$ in $L^\infty$. Moreover, 
the divergence-free condition entails that the third component of $u_T$ is given by 
$$
 u_{T,3}\left(\zeta\right)=-\eta\int_{\zeta}^\infty \dv_h u_{T,h}(\zeta')d\zeta';$$
thus $ u_{T,3}=\mathcal O(\beta \eta^2||\sigma||_{W^{1,\infty}})$.
At last, in order to be consistent with \eqref{Ansatz}, we assume that the pressure inside the boundary layer is given by
$$
p(t,x_h,z,\om)\underset{z\sim a}{\approx}  \frac{1}{\e}p_T \left( \frac{a-z}{\eta} \right)
$$
where $p_T=\mathcal O(\beta{\eta}||\sigma||_{\infty})$. 
Then the pressure term in the third component of \eqref{eq:depart} is of order $\beta||\sigma||_{\infty}/\e$, whereas the lowest order term in the left-hand side is of order $\eta^2\beta||\sigma||_{W^{1,\infty}}/\e$. Thus, since $\eta$ is small, we infer
$$
\p_{\zeta}p_T=0,
$$
and since $p_T$ vanishes at infinity, we have $p_T=0$: at first order, the pressure does not vary in the boundary layer. Thus, we now focus on the horizontal component of $u_T$, which is a solution of
\begin{eqnarray}
 &&\p_\tau \begin{pmatrix}
            u_{{T,1}}\\
u_{{T,2}}
           \end{pmatrix}
 - \frac{\nu\e}{\eta^2}\p_{\zeta}^2\begin{pmatrix}
            u_{{T,1}}\\
u_{{T,2}}
           \end{pmatrix} + \begin{pmatrix}
            -u_{{T,2}}\\
u_{{T,1}}
           \end{pmatrix} =0,\label{eq:ubl0}\\
&&\p_{\zeta}u_{T, h|\zeta=0}=-\eta\beta \sigma(\tau,x,y,\om),\label{CL1:ubl0}\\
&&u_{T, h|\zeta=+\infty}=0.\label{CL2:ubl0}
\end{eqnarray}
We now choose $\eta$ so that all the terms in \eqref{eq:ubl0} are of the same order, that is,
$$
\eta=\sqrt{\nu\e}.
$$
Moreover, since $ \sigma$ is a stationary function of time, it seems natural to look for stationary solutions of \eqref{eq:ubl0}, and thus for fundamental solutions $\varphi_1,\varphi_2$ of \eqref{eq:ubl0} in the following sense: $\varphi_i$ ($i=1,2$) is a solution of \eqref{eq:ubl0} in the sense of distributions and satisfies \eqref{CL2:ubl0}, and
$$
\p_\zeta \varphi_{1|\zeta=0}= \delta_0(\tau)\begin{pmatrix}
                                         1\\0
                                        \end{pmatrix},\qquad \p_\zeta \varphi_{2|\zeta=0}= \delta_0(t)\begin{pmatrix}
                                         0\\1
                                        \end{pmatrix}
$$
where $\delta_0$ denotes the Dirac mass at $\tau=0$. 
If we can construct $\varphi_1$ and $\varphi_2$ satisfying the above conditions, then a good candidate for $u_T$ is
$$
u_{T, h}(t,\tau,x_h,\zeta;\om)=-\sqrt{\nu\e}\beta\sum_{j\in\{1,2\}}\int_0^\infty \sigma_j(t,\tau-s,x_h;\om)\varphi_j(s,\zeta) ds.
$$
Hence we now define $\varphi_{1}, \varphi_{2}$. Since the fundamental solution of the heat equation is known, let us make the following change of unknow function (see \cite{MasmoudiCPAM}):
$$
H^{\pm}_j=\p_{\zeta}\left[ e^{\pm i\tau}\begin{pmatrix}
									\varphi_{j,1}\pm i \varphi_{j,2}\\
									\varphi_{j,2}\mp i \varphi_{j,1}
                                                                 \end{pmatrix}
 \right],\quad j=1,2.
$$
Then, setting $e_1^\pm:=(1,\mp i)$, $e_2^\pm:=(\pm i,1)$, we infer that $H_j^\pm=G e_j^\pm$, where $G$ satisfies
\be
\left\{ 
\begin{array}{l}
\ds\p_\tau G - \p_{\zeta}^2 G=0,\quad\tau>0,\zeta>0,\\
G_{|\zeta=0}(\tau)=\delta_0(\tau),\\
G_{|\zeta=+\infty}=0.
\end{array}
 \right.\label{eq:H}
\ee
The boundary condition at $\zeta=0$ should be understood as follows: for all $\varphi\in\mathcal C_b(\R),$ for all $\tau> 0$
$$
\lim_{\zeta\to 0^+}\left[\int_0^\infty \varphi(\tau-s) G(s,\zeta)ds\right]=\varphi(\tau).
$$
It can be checked (see Chapter 4, section 1 in \cite{ladyparabolic}) that 
$$
G(\tau,\zeta):=  \frac{\zeta}{\sqrt{4\pi}\tau^{3/2}} \exp\left({- \frac{\zeta^2}{4\tau} }\right)\quad\text{for } \tau>0,\ \zeta> 0,
$$
is a solution of \eqref{eq:H}, which leads to
\begin{eqnarray*}
\p_\zeta \varphi_j(\tau,\zeta)&:=&\frac{1}{2}\left[e^{-i\tau}  H_j^+ (\tau,\zeta) + e^{+i\tau} H_j^-(\tau,\zeta) \right]\\
&=&\frac{1}{2}G(\tau,\zeta)\left[e^{-i\tau} e_j^+  +e^{+i\tau} e_j^- \right].
\end{eqnarray*}
Unfortunately, when we integrate this formula with respect to $\zeta$ in order to obtain an explicit expression for $u_{{T,h}}$, the convolution kernel thus obtained is
$$
\varphi_j(\tau,\zeta)=-\frac{1}{\sqrt{4\pi \tau}}\exp\left({- \frac{\zeta^2}{4\tau} }\right)\left[e^{-i\tau} e_j^+  +e^{+i\tau} e_j^- \right],
$$
and is not integrable near $\tau=+\infty$. Hence, in the spirit of \cite{MasmoudiCPAM}, we consider an approximate corrector in the boundary layer: for $\delta>0$, we set
$$
G_{\delta}(\tau,\zeta)= \frac{\zeta}{\sqrt{4\pi}\tau^{3/2}} \exp\left({- \frac{\zeta^2}{4\tau} }-\delta \tau\right).
$$
Then the corresponding corrector is given by
\begin{eqnarray}
u_{T,h}^{\delta}(t,\tau,x_h, \zeta,\om)
&=&-{\beta\sqrt{\e\nu}}\sum_{j\in\{1,2\}}\int_0^\infty\:\: \varphi_j(s,\zeta)\exp(-\delta s)\sigma_j(t,\tau-s,x_h;\om)\:ds\label{expr:ubl0}\\
&=&\frac{\beta\sqrt{\nu\e}}{\sqrt{4\pi}}\sum_{\pm}\!\! \int_0^\infty \frac{1}{\sqrt s} \exp\left(-\frac{\zeta^2}{4 s}\right)(\sigma \pm i \sigma^\bot)(t, \tau-s,x_h,\om)e^{-\delta s\pm i s}ds.\nonumber
\end{eqnarray}
The approximate corrector $u_{T}^{\delta}$ satisfies the exact boundary conditions at $\zeta=0$, and equation \eqref{eq:ubl0} up to an error term of order $\delta$
$$
\p_\tau u_{T,h}^{\delta} -\p_\zeta^2 u_{T,h}^{\delta} + \left( u_{T,h}^{\delta}\right)^\bot + \delta  u_{T,h}^{\delta}=0.
$$

The third component of $u_T^\delta$ is then given by
$$
u_{T,3}^{\delta}(\zeta)= -\sqrt{{\nu}{\e} }\int_\zeta^\infty \dv_h u_{T,h}^{\delta},
$$
which yields
$$
u_{T,3}^{\delta}(\cdot,\tau,\cdot, \zeta,\om)
=\frac{{\nu\e}\beta}{\sqrt{4\pi}}\sum_{\pm}\int_0^\infty  \varphi\left(\frac{\zeta}{\sqrt s}\right)(\dv_h \sigma \mp i \rot_h \sigma)(\cdot, \tau-s,\cdot,\om)e^{-\delta s\pm i s}ds,$$
where $\varphi $ is defined by $\varphi'(\zeta)= \exp\left(-\frac{\zeta^2}{4}\right)$, $\varphi(+\infty)=0$.

In horizontal Fourier variables, we have
\be
u_{T,3}^{\delta}(t,\tau,x_h, \zeta,\om)=\frac{{\nu}{\e}\beta }{\sqrt{4\pi}a_1a_2}\sum_{k_h\in\Z^2}\sum_{\pm}e^{i k'_h\cdot x_h}\int_0^\infty \varphi\left(\frac{\zeta}{\sqrt s}\right) 
\hat \sigma^\pm(t,\tau-s,k_h,\om)e^{-\delta s\pm i s}ds
\label{expr:u1T3}
\ee
where
$$\hat \sigma^\pm(k_h)= ik_h' \cdot \hat \sigma(k_h) \pm (k_h')^\bot \cdot \hat \sigma(k_h).$$

Now, set
$$
 u^{\text{BL},\delta}_T(t,x_h,z;\om):= u_T^\delta \left( t,\frac{t}{\e}, x_h,\frac{a-z}{\sqrt{\e\nu}};\om \right).
$$
It can be readily checked that
$$
\p_t  u^{\text{BL},\delta}_T + \frac{1}{\e}e_3\wedge  u^{\text{BL},\delta}_T - \nu \p_z^2  u^{\text{BL},\delta}_T - \Delta_h u^{\text{BL},\delta}_T = \begin{pmatrix}
															(\p_t -\Delta_h + \delta)u_{T,h}^\delta\left( t,\frac{t}{\e}, x_h,\frac{a-z}{\sqrt{\e\nu}};\om \right)\\
															\left(\p_t+ \frac{1}{\e}\p_\tau -\Delta_h - \frac{1}{\e}\p_\zeta^2 + \delta\right) u_{T,3}^\delta \left( t,\frac{t}{\e}, x_h,\frac{a-z}{\sqrt{\e\nu}};\om \right)
                                                                                                                   \end{pmatrix}.
$$

There remains to evaluate $u_T^\delta$ in $L^\infty$ and $L^2$.

\subsection{Continuity estimates}

This paragraph is devoted to the proof of the following Proposition:
\begin{proposition}
Assume that $\sigma\in L^\infty(E\times[0,\infty)\times \T^2, \mathcal C_b(\R_\tau))$, and that $\sigma$ satisfies {\bf (H1)-(H2)}. Then for all $T>0$, there exists a constant $C_T>0$, such that for all $\delta,\nu,\e,\beta>0$, 
\begin{eqnarray}
\left|\left| u_T^\delta ,\ \p_\zeta u_T^\delta,\ \zeta\p_\zeta u_T^\delta\right|  \right|_{L^\infty([0,T]\times\R_\tau\times\T^2\times[0,\infty)_\zeta\times E)} &\leq& C_T{\sqrt{\e\nu}}{\beta},\\
\left|\left| u_T^\delta,\ \p_\zeta u_T^\delta,\ \zeta\p_\zeta u_T^\delta \right|  \right|_{L^\infty([0,T]\times\R_\tau\times E, L^2([0,\infty)_\zeta\times\T^2))}&\leq& C_T{\sqrt{\e\nu}}{\beta}.
\end{eqnarray}

\label{prop:estuT0}
\end{proposition}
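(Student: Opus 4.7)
The proof relies entirely on the explicit formula \eqref{expr:ubl0} for $u_{T,h}^\delta$ (the third component will follow for free since \eqref{expr:u1T3} has an extra factor $\sqrt{\e\nu}$). The central difficulty is that the convolution kernel $s^{-1/2}e^{-\delta s\pm is}$ is not integrable as $\delta \to 0$, so a brute-force estimate using $\|\sigma\|_\infty$ blows up like $\delta^{-1/2}$. The point is to exploit the oscillation $e^{\pm is}$ combined with the absence of spectral mass of $\sigma$ near the resonant frequencies $\pm 1$, i.e.\ hypotheses \textbf{(H1)-(H2)}.

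The plan is to introduce, for $\alpha>0$, the approximation $u_{T,h}^{\delta,\alpha}$ obtained by replacing $\sigma$ by $\sigma_\alpha$ in \eqref{expr:ubl0}, substitute $\sigma_\alpha(t,\tau-s,x_h;\om)=\int_\R e^{-\alpha|\lambda|}e^{i\lambda(\tau-s)}\cFa\sigma(t,\lambda,x_h;\om)\:d\lambda$, and exchange the $s$- and $\lambda$-integrals (legitimate thanks to \textbf{(H1)}, which gives $\cFa\sigma\in L^1_\lambda$). The $s$-integral then reduces to the explicit Laplace transform
$$
\int_0^\infty s^{-1/2}e^{-\zeta^2/(4s)}e^{-zs}\:ds=\sqrt{\pi/z}\:e^{-\zeta\sqrt z},\qquad z=z_\pm(\lambda)=\delta+i(\lambda\mp 1),
$$
with the principal branch $\mathrm{Re}\sqrt z>0$. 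This yields the spectral representation
$$
u_{T,h}^{\delta,\alpha}=\frac{\beta\sqrt{\e\nu}}{2}\sum_\pm\int_\R e^{-\alpha|\lambda|}e^{i\lambda\tau}\cFa(\sigma\pm i\sigma^\bot)(t,\lambda,x_h,\om)\frac{e^{-\zeta\sqrt{z_\pm(\lambda)}}}{\sqrt{z_\pm(\lambda)}}\:d\lambda.
$$
Using $|1/\sqrt{z_\pm}|=(\delta^2+(\lambda\mp 1)^2)^{-1/4}$, the elementary bound $\mathrm{Re}\sqrt{z_\pm}\geq c(\delta^2+(\lambda\mp 1)^2)^{1/4}$, and $\|e^{-\zeta\sqrt{z_\pm}}\|_{L^2(\zeta)}=(2\mathrm{Re}\sqrt{z_\pm})^{-1/2}\leq C(\delta^2+(\lambda\mp 1)^2)^{-1/8}$, Minkowski's inequality in $\zeta$ reduces both target norms to controlling
$$
\int_\R |\cFa\sigma(\lambda)|\:(\delta^2+(\lambda\mp 1)^2)^{-\rho}\:d\lambda,\qquad \rho\in\{1/4,\,3/8\},
$$
the weight $1/4$ corresponding to the $L^\infty(\zeta)$-bound and $3/8$ to the $L^2(\zeta)$-bound.

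I would then split this $\lambda$-integral into $V_\pm$ (the neighbourhood of $\pm 1$ given by \textbf{(H2)}) and its complement. On $V_\pm$ both weights are integrable uniformly in $\delta>0$ (the worst case is $|\mu|^{-3/4}$ near $\mu=0$, still integrable), and $\|\cFa\sigma\|_{L^\infty(V_\pm)}\to 0$ as $\alpha\to 0$ by \textbf{(H2)}, hence is bounded for $\alpha\in(0,\alpha_0)$; on $V_\pm^c$ the weights are bounded by a constant depending on $\mathrm{dist}(V_\pm^c,\{\pm 1\})$, and $\int_{V_\pm^c}|\cFa\sigma|\:d\lambda$ is uniformly controlled by \textbf{(H1)}. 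This gives the two desired bounds on $u_{T,h}^{\delta,\alpha}$, uniformly in $\delta>0$ and $\alpha\in(0,\alpha_0)$. Passing $\alpha\to 0$ for fixed $\delta>0$ is immediate by dominated convergence in the $s$-integral, since the kernel $s^{-1/2}e^{-\delta s-\zeta^2/(4s)}$ is integrable and $\sigma_\alpha\to\sigma$ in $L^\infty$ by Lemma A.1; the uniform bound transfers to $u_{T,h}^\delta$. The derivative estimates use the same spectral representation: $\p_\zeta$ turns $1/\sqrt{z_\pm}$ into $-1$, erasing the resonant singularity so that \textbf{(H1)} alone suffices; the factor $\zeta$ in $\zeta\p_\zeta u_{T,h}^\delta$ contributes $\|\zeta e^{-\zeta\sqrt{z_\pm}}\|_{L^\infty(\zeta)}\leq C/\mathrm{Re}\sqrt{z_\pm}\leq C(\delta^2+(\lambda\mp 1)^2)^{-1/4}$, reproducing the weight already handled. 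The $L^2(\T^2)$-direction of the norms is then trivial since $\sigma$ has only finitely many horizontal Fourier modes.

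The main obstacle is the sharp balance at the resonance: the boundary-layer kernel produces a \emph{barely} integrable weight $(\delta^2+(\lambda\mp 1)^2)^{-3/8}$ near $\pm 1$, and the fact that \textbf{(H2)} furnishes only an asymptotic vanishing of $\cFa\sigma$ (rather than pointwise vanishing at $\pm 1$) is exactly what forces one to work with the regularised family $u_{T,h}^{\delta,\alpha}$ and pass to the limit $\alpha\to 0$ at the very end; a direct argument on $\sigma$ itself does not seem to be available.
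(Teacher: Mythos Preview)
Your approach for the horizontal component $u_{T,h}^\delta$ is correct and genuinely different from the paper's. The paper never evaluates the $s$-integral in closed form: it splits $\int_0^\infty = \int_0^1+\int_1^\infty$ and integrates by parts in $s$ on $[1,\infty)$ (once for the $L^\infty$ bound, twice for $L^2$), producing weights $|z_\pm|^{-1}$ and $|z_\pm|^{-2}$ which are \emph{not} uniformly integrable over $V_\pm$ --- they contribute factors $\ln(1/\delta)$ and $\delta^{-1}$ that must be killed by the full vanishing $\sup_{V_\pm}\|\cFa\sigma\|\to 0$ in {\bf(H2)}. The paper also carries a separate crude estimate on the $\sigma-\sigma_\alpha$ piece. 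Your exact Laplace identity $\int_0^\infty s^{-1/2}e^{-\zeta^2/(4s)-zs}\,ds=\sqrt{\pi/z}\,e^{-\zeta\sqrt z}$ yields the sharper weights $|z_\pm|^{-1/4}$ and $|z_\pm|^{-3/8}$, which \emph{are} uniformly integrable near $\pm 1$; this is why, for $u_{T,h}^\delta$, you only need boundedness of $\sup_{V_\pm}\|\cFa\sigma\|$ rather than its vanishing. That is a real simplification.

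There is, however, a gap in your treatment of the vertical component. The claim that $u_{T,3}^\delta$ ``follows for free'' because of the extra factor $\sqrt{\e\nu}$ is wrong: that factor is independent of $\delta$ and buys nothing for $\delta$-uniformity. Writing $u_{T,3}^{\delta,\alpha}=-\sqrt{\e\nu}\int_\zeta^\infty\dv_h u_{T,h}^{\delta,\alpha}$ and integrating your spectral kernel gives $e^{-\zeta\sqrt{z_\pm}}/z_\pm$, hence weight $|z_\pm|^{-1}$ for $L^\infty(\zeta)$ and $|z_\pm|^{-5/4}$ for $L^2(\zeta)$; neither is uniformly integrable over $V_\pm$ (they give $\ln(1/\delta)$ and $\delta^{-1/4}$ respectively). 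The same issue arises for $\zeta\p_\zeta u_{T,3}^\delta$. Your stated logic --- ``bound $u^{\delta,\alpha}$ uniformly in $(\delta,\alpha)$, then pass $\alpha\to 0$'' --- therefore fails here. The repair is exactly the mechanism the paper uses: bound $|u_{T,3}^{\delta,\alpha}|\le C_1+C_2\sup_{V_\pm}\|\cFa\sigma\|\cdot\ln(1/\delta)$, observe that $u^{\delta,\alpha}\to u^\delta$ pointwise for fixed $\delta$, and take the $\liminf$ in $\alpha$ using the vanishing in {\bf(H2)} to absorb the $\delta$-divergent factor. So your elegant shortcut survives for the horizontal part but not for the vertical one, where you are forced back to the paper's ``infimum over $\alpha$'' argument. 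A minor related point: Lemma~A.1 gives $\sigma_\alpha\to\sigma$ only locally uniformly in $\tau$, not in $L^\infty(\R_\tau)$; your dominated-convergence step still works (pointwise in $s$, dominated by $s^{-1/2}e^{-\delta s}\|\sigma\|_\infty$), but the phrasing should be adjusted.
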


\begin{remark}
With the assumptions of Theorem \ref{thm:conv}, the same bounds also hold for all the derivatives of $u_{T,\delta}$ with respect to the macroscopic time variable $t$ and the horizontal space variable $x_h$.
\end{remark}

\begin{proof}
We focus on the horizontal component of $u_T^\delta$; the vertical one is treated with similar arguments. Recall that $u_{T,h}^\delta$ is given by \eqref{expr:ubl0}; in order to simplify we set
$\sigma^\pm:=\sigma\pm i \sigma^\bot$.

First, we write 
\begin{eqnarray}
u_{T,h}^\delta (\cdot,\tau,\cdot, \zeta,\cdot)&=&\frac{\sqrt{\nu\e}\beta}{\sqrt{4\pi}}\sum_\pm\int_0^\infty \frac{1}{\sqrt s} \exp\left(-\frac{\zeta^2}{4 s}-\delta s\right)
\sigma^\pm (\cdot, \tau-s,\cdot)e^{\pm is}\:ds\nonumber\\
&=&\frac{\sqrt{\nu\e}\beta}{\sqrt{4\pi}}\sum_\pm\int_0^\infty \frac{1}{\sqrt s} \exp\left(-\frac{\zeta^2}{4 s}\right)
\sigma_\alpha^\pm (\cdot, \tau-s,\cdot)e^{(-\delta\pm i)s}\:ds\label{c_alpha}\\
&+& \frac{\sqrt{\nu\e}\beta}{\sqrt{4\pi}}\sum_\pm\int_0^\infty \frac{1}{\sqrt s} \exp\left(-\frac{\zeta^2}{4 s}\right)
\left(\sigma^\pm-\sigma_\alpha^\pm\right)(\cdot, \tau-s,\cdot)e^{(-\delta\pm i)s}\:ds.\label{c-c_alpha}
\end{eqnarray}
The  term \eqref{c-c_alpha} can easily be evaluated thanks to Lemma A.1 in the Appendix; notice that since the convergence given in Lemma A.1 is not uniform with respect to $\tau\in[0,\infty)$, we cannot derive an estimate in $L^\infty([0,\infty)_\tau)$ at this stage. Hence we keep the variable $\tau$ for the time being; there exists a constant $C>0$ such that for all $\tau\geq 0,$ $R>0$, 
\begin{eqnarray}
 &&\left\|\int_0^\infty \frac{1}{\sqrt s} \exp\left(-\frac{\zeta^2}{4 s}-\delta s\right)
\left(\sigma^\pm-\sigma_\alpha^\pm\right)(\cdot, \tau-s,\cdot,\om)e^{\pm is}\:ds\right\|_{L^\infty([0,T]\times E, L^\infty(\T^2))}\nonumber\\
&\leq & C||\sigma-\sigma_\alpha||_{L^\infty([0,T]\times E\times \left[ \tau-R,\tau \right]\times \T^2)}\int_0^R \frac{1}{\sqrt s} \exp\left(-\frac{\zeta^2}{4 s}-\delta s\right)\:ds\label{est_diff_sigma1}\\
&+&C\left\| \sigma \right\|_{L^\infty([0,T]\times\R_\tau\times \T^2\times E)}\int_R^\infty \frac{1}{\sqrt s} \exp\left(-\frac{\zeta^2}{4 s}-\delta s\right)\:ds\nonumber\\
&\leq & \frac{C}{\delta}||\sigma-\sigma_\alpha||_{L^\infty([0,T]\times E\times \left[ \tau-R,\tau \right]\times\T^2)}\nonumber\\
&+& C\left\| \sigma \right\|_{L^\infty([0,T]\times\R_\tau\times \T^2\times E)}\frac{\exp(-\delta R)}\delta.\nonumber
\end{eqnarray}
Choosing $R=\delta^{-2},$ we deduce that
\begin{multline*}
 \left\|\int_0^\infty \frac{1}{\sqrt s} \exp\left(-\frac{\zeta^2}{4 s}-\delta s\right)
\left(\sigma^\pm-\sigma_\alpha^\pm\right)(\cdot, \tau-s,\cdot,\om)e^{\pm is}\:ds\right\|_{L^\infty([0,T]\times E, L^\infty(\T^2))}\\
\leq \frac{C}{\delta}||\sigma-\sigma_\alpha||_{L^\infty([0,T]\times E\times \left[ \tau-\frac{1}{\delta^2},\tau \right]\times\T^2)} + C\frac{\exp\left(-\frac{1}{\delta}\right)}\delta.
\end{multline*}

As for the  term \eqref{c_alpha}, recalling the definition of $\sigma_\alpha$, we have
\begin{eqnarray}
 &&\int_0^\infty \frac{1}{\sqrt s} \exp\left(-\frac{\zeta^2}{4 s}-\delta s\right)
\cFa\sigma^\pm(\cdot, \tau-s,\cdot,\om)e^{\pm is}\:ds\\
&=&\int_0^\infty\int_\R \frac{1}{\sqrt s} e^{-\alpha|\lambda|}\exp\left(-\frac{\zeta^2}{4 s}-\delta s\right)
\cFa\sigma^\pm(\cdot,\lambda,\cdot,\om)e^{i\lambda(\tau-s)}e^{\pm is}\:d\lambda\:ds.\label{eq:alpha_1}
\end{eqnarray}
We first evaluate
$$
\int_0^\infty\frac{1}{\sqrt s}\exp\left(-\frac{\zeta^2}{4 s}\right)e^{-(\delta + i(\lambda\pm 1))s}\:ds.$$
We split the integral into two parts, one going from $s=0$ to $s=1$, and the other from $s=1$ to $s=\infty$. It is obvious that for all $\zeta>0,\delta>0,\lambda\in\R$,
\be
\left|\int_0^1\frac{1}{\sqrt s}\exp\left(-\frac{\zeta^2}{4 s}\right)e^{-(\delta + i(\lambda\pm 1))s}\:ds\right|\leq \int_0^1\frac{1}{\sqrt s}\exp\left(-\frac{\zeta^2}{4 s}\right)\:ds\leq \frac{1}{2}.\label{int01_2}
\ee
Integrating by parts the second integral, we obtain
\begin{eqnarray}
 &&\int_1^\infty\frac{1}{\sqrt s}\exp\left(-\frac{\zeta^2}{4 s}\right)e^{-(\delta + i(\lambda\pm 1))s}\:ds\nonumber\\
&=&\frac{1}{\delta + i(\lambda\pm 1)}\exp\left(-\frac{\zeta^2}{4 }\right)\nonumber\\
&&-\frac{1}{2(\delta + i(\lambda\pm1))}\int_1^\infty\frac{1}{s^\frac{3}{2}}\left[ 1- \frac{\zeta^2}{2s} \right]\exp\left(-\frac{\zeta^2}{4 s}\right)e^{-(\delta + i(\lambda\pm1))s}\:ds.\label{int1infty}
\end{eqnarray}
We are now ready to derive the $L^\infty$ estimate; the function
$$
x\mapsto \left(1-\frac{x^2}2\right) e^{-\frac{x^2}4}
$$
is bounded on $\R$. Hence, gathering \eqref{int01_2} and \eqref{int1infty}, we deduce that there exists a constant $C$ such that for all $\zeta>0,\delta>0,\lambda\in\R$,
$$\left| \int_0^\infty\frac{1}{\sqrt s}\exp\left(-\frac{\zeta^2}{4 s}\right)e^{-(\delta + i(\lambda\pm 1))s}\:ds \right|
\leq  C\left[ 1 +\frac{1}{|\delta + i(\lambda\pm 1)|}   \right].
$$
Inserting this inequality in \eqref{eq:alpha_1}, we obtain
\begin{eqnarray*}
 &&\left\|\int_0^\infty \frac{1}{\sqrt s} \exp\left(-\frac{\zeta^2}{4 s}-\delta s\right)
\cFa\sigma^\pm(\cdot, \tau-s,\cdot,\om)e^{\mp is}\:ds\right\|_{L^\infty(\T^2)}\\
&\leq& C \int_\R e^{-\alpha|\lambda|} \left[ 1 +\frac{1}{|\delta + i(\lambda\mp 1)|}   \right]\left\|\hat \cFa\sigma^\pm(\cdot,\lambda,\cdot,\om)\right\|_{L^\infty(\T^2)}\:d\lambda\\
&\leq& C\left[ \sup_\alpha ||\hat{\sigma}_{+,\alpha}||_{L^\infty(E\times \T^2, L^1(\R))} + \int_{V_\pm}  \frac{1}{|\delta + i(\lambda\mp 1)|}   \left\|\cFa\sigma(\cdot,\lambda,\cdot,\om)\right\|_{L^\infty(\T^2)}\:d\lambda\right] \\&&+C \int_{\R\setminus V_\pm} \left\|\cFa\sigma(\cdot,\lambda,\cdot,\om)\right\|_{L^\infty(\T^2)}\:d\lambda\\
&\leq & C\left[ \sup_\alpha ||\cFa\sigma||_{L^\infty(E\times \T^2, L^1(\R))}  + \sup_{\lambda\in V_\pm}\left\| \cFa\sigma(\lambda) \right\|_{L^\infty(\T^2)}\ln(\delta)\right].
\end{eqnarray*}
Above, we have used the following facts: there exists a constant $c_1>0$ such that
$$
\left| \delta + i(\lambda \mp 1) \right|\geq |\lambda \mp 1|\geq c_1\quad \forall \lambda\in\R\setminus V_\pm,
$$
and there exists another constant $c_2>0$ such that
$$
\int_{V_\pm} \frac{1}{|\delta + i(\lambda\mp 1)|} \leq \int_{\pm 1-c_2}^{\pm 1 + c_2}\frac{1}{\sqrt{\delta^2 + (1+\lambda)^2}}d\lambda\leq C\ln(\delta).
$$
We deduce that for all $\alpha>0$, for all $\delta>0$, $\tau\geq 0,$
\begin{eqnarray*}
 &&\left|\left|u_{T,h}^\delta(\tau)\right|\right|_{L^\infty([0,T]\times \T^2\times [0,\infty)\zeta\times E)}\\&\leq  &C\sqrt{\e\nu}\beta\left[ 1 +\frac{\exp\left(-\frac{1}{\delta}  \right)}{\delta}\right]\\
&+& C\sqrt{\e\nu}\beta\left[ \frac{1}{\delta}||\sigma-\sigma_\alpha||_{L^\infty([0,T]\times \left[ \tau-\delta^{-1},\tau \right]\times E\times\T^2)} +\sup_{\lambda\in V_+\cup V_-}\left\| \cFa \sigma (\lambda) \right\|\ln(\delta) \right].
\end{eqnarray*}
Taking the infimum with respect to $\alpha$ of the right-hand side, with $\delta>0$ fixed, we deduce that
$$
\sup_{\delta>0}\left|\left|u_{T,h}^\delta\right|\right|_{L^\infty([0,T]\times [0,\infty)\tau \times \T^2\times [0,\infty)\zeta\times E)}\leq C\sqrt{\e\nu}\beta.
$$
\vskip1mm

We now turn to the derivation of the $L^2$ estimate, which is similar to the above computations. The main difference lies in the fact that we need to integrate by parts \eqref{int1infty} yet another time, which yields
\begin{eqnarray*}
 &&\int_1^\infty\frac{1}{\sqrt s}\exp\left(-\frac{\zeta^2}{4 s}\right)e^{-(\delta + i(\lambda\pm 1))s}\:ds\nonumber\\
&=&\frac{1}{\delta + i(\lambda\pm 1)}\exp\left(-\frac{\zeta^2}{4 }\right)-\frac{1}{2(\delta + i(\lambda \pm 1))^2}\left[ 1- \frac{\zeta^2}{2}\right]\exp\left(-\frac{\zeta^2}{4 }\right)\\
&&-\frac{1}{2(\delta + i(\lambda\pm 1))^2}\int_1^\infty\frac{1}{s^\frac{5}{2}}\phi\left( \frac{\zeta}{\sqrt{s}} \right)e^{-(\delta + i(\lambda\pm 1))s}\:ds,
\end{eqnarray*}
where
$$
\phi(x)=-\left( \frac{x^4}{8} - \frac{3 x^2}{2} + \frac{3}{2}\right) \exp\left(-\frac{x^2}{4}\right).
$$
Consequently, remembering \eqref{int01_2}, we have
\begin{eqnarray*}
&&\left| \int_0^\infty\frac{1}{\sqrt s}\exp\left(-\frac{\zeta^2}{4 s}\right)e^{-(\delta + i(\lambda\pm 1))s}\:ds \right|
\\&\leq & \int_0^1\frac{1}{\sqrt s}\exp\left(-\frac{\zeta^2}{4 s}\right)\:ds + \frac{1}{|\delta + i(\lambda\pm1)|}\exp\left(-\frac{\zeta^2}{4 }\right)\\
&&+\frac{1}{2|\delta + i(\lambda\pm1)|^2}\left| 1- \frac{\zeta^2}{2}\right|\exp\left(-\frac{\zeta^2}{4 }\right)\\
&&+\frac{1}{2|\delta + i(\lambda\pm1)|^2}\int_1^\infty\frac{1}{s^\frac{5}{2}}\left|\phi\left( \frac{\zeta}{\sqrt{s}} \right)\right|\:ds.
\end{eqnarray*}
Plugging this estimate into \eqref{eq:alpha_1} and using {\bf (H1)-(H2)}, we infer that for all $\zeta>0$, for all $s>0$,
\begin{eqnarray*}
&&\left\| \int_0^\infty \frac{1}{\sqrt s} \exp\left(-\frac{\zeta^2}{4 s}-\delta s\right)
\sigma_\alpha^\pm(\cdot, \tau-s,\cdot,\om)e^{\pm is}\:ds \right\|_{L^\infty(\T^2)}\\
&\leq & C\left[\int_0^1\frac{1}{\sqrt s}\exp\left(-\frac{\zeta^2}{4 s}\right)\:ds\right]   \\
&&+ C\exp\left(-\frac{\zeta^2}{4 }\right)\left( 1 + \sup_{\lambda\in V_\pm}\left\|\cFa\sigma(\lambda) \right\|_{L^\infty([0,T]\times E, L^\infty(\T^2)))}\ln(\delta) \right)\\
&&+C\left| 1- \frac{\zeta^2}{2}\right|\exp\left(-\frac{\zeta^2}{4 }\right)\left( 1 + \sup_{\lambda\in V_\pm}\left\| \cFa\sigma(\lambda) \right\|_{L^\infty([0,T]\times E, L^\infty(\T^2)))}\frac{1}{\delta} \right)\\
&&+C\left[\int_1^\infty\frac{1}{s^\frac{5}{2}}\left|\phi\left( \frac{\zeta^2}{s} \right)\right|\:ds\right]\left( 1 + \sup_{\lambda\in V_\pm}\left\| \cFa\sigma(\lambda) \right\|_{L^\infty([0,T]\times E, L^\infty(\T^2)))}\frac{1}{\delta} \right).
\end{eqnarray*}
Here, we have used the inequality
$$
\int_{V_\mp}\frac{d\lambda}{|\delta + i(\lambda\pm 1)|^2}\leq \int_{\mp 1- c_2}^{\mp 1 + c_2}\frac{d\lambda}{\delta^2 + (\lambda\pm 1)^2}\leq \frac{C}{\delta}.
$$
There only remains to prove that each term of the right-hand side has a finite $L^2$ norm. First, thanks to Jensen's inequality, we have
\begin{eqnarray*}
\int_0^\infty \left( \int_0^1\frac{2}{\sqrt s}\exp\left(-\frac{\zeta^2}{4 s}\right)\:ds \right)^2 d\zeta&\leq & \int_0^\infty \int_0^1\frac{2}{\sqrt s}\exp\left(-\frac{\zeta^2}{2 s}\right)\:ds  d\zeta\\
&\leq & 2\int_0^1\: ds\int_0^\infty e^{-\frac{x^2}{2}}\:dx<\infty.
\end{eqnarray*}
Similarly, 
\begin{eqnarray*}
\int_0^\infty \left( \int_1^\infty\frac{1}{s^\frac{5}{2}}\left|\phi\left( \frac{\zeta}{\sqrt{s}} \right)\right|\:ds \right)^2 d\zeta&\leq & C\int_0^\infty \int_1^\infty\frac{1}{s^\frac{5}{2}}\left|\phi\left(  \frac{\zeta}{\sqrt{s}}\right)\right|^2\:ds  d\zeta\\
&\leq & C\left( \int_1^\infty\frac{1}{s^2}\ds \right)\left(\int_0^\infty\left|\phi\left( x \right)\right|^2\:dx\right)<\infty.
\end{eqnarray*}
We also have to evaluate the $L^2$ norm of the integral in \eqref{est_diff_sigma1}; we have
\begin{eqnarray*}
 &&\int_0^\infty\left[\int_0^\infty \frac{1}{\sqrt s} \exp\left(-\frac{\zeta^2}{4 s}-\delta s\right)\:ds\right]^2d\zeta\\
&\underset{\substack{x=\sqrt{\delta}\zeta,\\u=\delta s}}{\leq} & \frac{1}{\delta^{\frac{3}{2}}}\int_0^\infty\left[\int_0^\infty \frac{1}{\sqrt u} \exp\left(-\frac{x^2}{4 u}-u\right)\:du\right]^2dx\\
&\leq & \frac{1}{\delta^{\frac{3}{2}}}\int_0^\infty\int_0^\infty \frac{1}{ u} \exp\left(-\frac{x^2}{2 u}-u\right)\:du\:dx\\
&\leq &  \frac{1}{\delta^{\frac{3}{2}}}\int_0^\infty\int_0^\infty \frac{1}{ \sqrt u} \exp\left(-\frac{x^2}{2 }-u\right)\:du\:dx\\
&\leq &\frac{C}{\delta^{\frac{3}{2}}}.
\end{eqnarray*}
Gathering all the terms, we obtain, for all $\alpha, \delta>0$, for all $\tau>0,$
\begin{eqnarray*}
&&\left\|u_{T,h}^\delta(\tau)\right\|_{L^\infty([0,T]\times E, L^2([0,\infty)_\zeta, L^\infty(\T^2)))}^2\\&\leq & C\beta^2{\e\nu}\frac{||\sigma-\sigma_\alpha||_{L^\infty([0,T]\times [\tau-\delta^{-1},\tau]\times E, L^\infty(\T^2))}}{\delta^{\frac{3}{2}}} \\&+& C\beta^2{\e\nu}\left(\frac{\exp\left( -\frac{1}{\delta} \right)}{\delta^{\frac{3}{2}}}+ \sup_{\lambda\in V_-}\left| \cFa \sigma^+(\lambda) \right|\left(\frac{1}{\delta}+ \ln(\delta) \right)\right). 
\end{eqnarray*}
Taking the infimum of the above inequality with respect to $\alpha$, we infer the $L^2$ estimate on $u_{T,h}^\delta.$ The estimates on $u_{T,3}^\delta$, $\p_\zeta u_T^\delta$, and $\zeta\p_\zeta u_T^\delta$ are derived in a similar fashion.
\end{proof}

\begin{remark}
Stationary boundary layer terms relative to Dirichlet boundary conditions can also be defined: consider for instance the boundary condition
$$
u_{h|z=0}(t,x_h)=c_{B,h}\left( t,\frac{t}{\e}, x_h;\om \right).
$$
The construction is the same as for Neumann boundary conditions, and is in fact more simple because we need not integrate with respect to the variable $\zeta$. Thus, with the same notations as above, the boundary layer term at the bottom is given by
$$u_{{B,h}}^\delta(t,\tau,x_h, \zeta,\om)
=\frac{1} 2\sum_{j\in\{1,2\}}\int_{0}^\infty G_\delta(s,\zeta)\left[e^{-is} e_j^+  +e^{+is} e_j^- \right]c_{B,j}(t,\tau-s,x,y;\om)\:ds,$$
and
$$
 u_{{B,3}}^\delta(t,\tau,x_h, \zeta,\om)
=\frac{{\nu}{\e} }{\sqrt{4\pi}}\sum_{k_h\in\Z^2}\sum_\pm e^{i k'_h\cdot x_h}\int_0^\infty \frac{1}{\sqrt s} \exp\left(-\frac{\zeta^2}{4 s}\right)
\hat c_{B,h}^\pm(\cdot,\tau-s,k_h,\om)e^{-\delta s\pm i s}ds.
$$
\label{rem:BL-Dir-stat}

\end{remark}

\subsection{Previous results in the quasi-periodic case}

For the reader's convenience, we have gathered here previous results appearing in \cite{CheminDGG,MasmoudiCPAM}, in which the authors compute the boundary layer term at the bottom of the fluid.
We recall that it can be expected that the solution in the interior of the domain  behaves like some function $\exp (-tL/\e) w(t)$, with $w\in L^\infty(E, \mathcal C([0,\infty), \cH))$. In general, the horizontal component of such a function does not vanish at $z=0$, and thus a boundary layer has to be added in order to restore the Dirichlet boundary condition. Consequently, we seek for a boundary layer term $u^{\text{BL}}_B$ which is an approximate solution of equation \eqref{eq:depart} and which satisfies
\be u^{\text{BL}}_{B,h|z=0}(t,x_h)=\sum_{k\in\Z^3,k\neq 0}\hat c_{B,h} (t,k) e^{ik'_h\cdot x_h} e^{-i\lambda_kt/\e};
\label{CL:uB0_0}\ee
for the boundary layer term at the first order, the coefficient $\hat c_{B,h} (t,k)$ will be given by the formula
$$
\hat c_{B,h} (t,k):=-\mean{N_k, w(t)} \begin{pmatrix}
                                      n_1(k)\\n_2(k)
                                     \end{pmatrix}.
$$
However, we will also use this construction for the lower order boundary layer terms, and thus we keep an arbitrary value for $\hat c_{B,h} (t,k)$ for the time being.

As before, we assume that
$$
u^{\text{BL}}_B(t,x_h,z)= u_B \left( t,\frac{t}{\e},x_h,\frac{z}{\sqrt{\e\nu}} \right).
$$
The decomposition \eqref{CL:uB0_0} leads us to search for a corrector $u_B$ satisfying
$$
u_{B,h}= \sum_{k\in\Z^3} u_{B,h,k},
$$
where each term $u_{B,h,k}$ is an approximate solution of \eqref{eq:depart}, and
$$
u_{B,h,k|\zeta=0}(t,\tau, x_h) = \hat c_{B,h}(t,k) e^{-i \lambda_k\tau}  e^{ik_h'\cdot x_h}.
$$
The periodicity in time of the boundary condition prompts us to choose $u_{B,h,k}$ as a periodic function of $\tau$, with frequency $\lambda_k$. Also, it is classical to seek $u_{B,h,k}$ as an exponentially decaying function of $\zeta$; the rate of decay is then dictated by the equation. The precise expression of $u_{B,h,k}$ is the following (see \cite{MasmoudiCPAM}):

\noindent{$\bullet$ \it First case: $k_h\neq 0$.}

\noindent In this case, $u_{B,h,k}$ is an exact solution of \eqref{eq:ubl0}, and is equal to
\be
u_{B,h,k}(t,\tau, x,y,\zeta)=\sum_{\pm} w_{k}^\pm(t;\om) e^{-i\lambda_k\tau + ik_h'\cdot x_h-\eta_k^\pm\zeta}\label{def:uB}\ee
where
\begin{eqnarray*}
\eta_k^{\pm}&=&\sqrt{1 \mp \lambda_k}\frac{1\pm i}{\sqrt{2}},\\
w_k^\pm(t;\om)&=&\frac{1}{2}\begin{pmatrix}
                                       \hat c_{B,1}(t,k) \pm i  \hat c_{B,2}(t,k)\\\hat c_{B,2}(t,k) \mp i \hat c_{B,1}(t,k)
                                      \end{pmatrix}=\frac{ \hat c_{B,1}(t,k) \pm i \hat c_{B,2}(t,k)}2\begin{pmatrix}  1\\ \mp i\end{pmatrix}.
\end{eqnarray*}

The vertical part of the boundary layer is then given by
\be\label{def:uB3}
u_{B,3,k}(t,\tau, x,y,\zeta)=\sqrt{\e\nu}\sum_{\pm} \frac{1}{\eta_k^\pm}ik'_h\cdot w_{k}^\pm(t;\om) e^{-i\lambda_k\tau + ik_h'\cdot x_h-\eta_k^\pm\zeta}.
\ee

\noindent{$\bullet$ \it Second case: $k_h= 0$.}

\noindent In this case, the construction of the resonant boundary layers  in \cite{forcage-resonant} proves that there are indeed boundary layers, but which are of order $\sqrt{\nu t}$, and not $\sqrt{\e\nu}$ in general. Thus the size of the boundary layer depends (slowly) on time. 

First, notice that if $k_h=0,$ then $-\lambda_k=\sgn(k_3)=\pm 1$. As in the first case, we decompose $\hat c_{B,h}(t,k)$ onto the basis $(1,\pm i):$
$$
\hat c_{B,h}(t,k)=\frac{1}{2}\sum_\pm\left( \hat c_{B,1}(t,k) \mp i \hat c_{B,2}(t,k)\right)\begin{pmatrix}  1\\ \pm i\end{pmatrix}.
$$
As a consequence, we have
\begin{eqnarray*}
\sum_{k_3\in\Z^*}\hat c_{B,h}(t,0,k_3)e^{-i\lambda_k\tau}&=&\alpha_+(t)e^{i\tau} \begin{pmatrix}  1\\ i\end{pmatrix} + \alpha_-(t)e^{-i\tau} \begin{pmatrix}  1\\- i\end{pmatrix}\\
&+&\gamma_+(t)e^{i\tau} \begin{pmatrix}  1\\ -i\end{pmatrix} + \gamma_-(t)e^{-i\tau} \begin{pmatrix}  1\\ i\end{pmatrix},
\end{eqnarray*}
where
$$
\begin{aligned}
\alpha_\pm(t)= \sum_{k_3,\sgn(k_3)=\pm 1}\left( \hat c_{B,1}(t,0,k_3) \mp i \hat c_{B,2}(t,0,k_3)\right),\\
\gamma_\pm(t)= \sum_{k_3,\sgn(k_3)=\pm 1}\left( \hat c_{B,1}(t,0,k_3) \pm i \hat c_{B,2}(t,0,k_3)\right).
\end{aligned}
$$
The terms $ \gamma_\pm e^{\pm i\tau}(1,\mp i)$ give rise to a classical boundary layer term, namely
$$
\sum_\pm \gamma_\pm (t) e^{\pm i\tau-\eta^\pm \zeta} \begin{pmatrix}  1\\ \mp i\end{pmatrix},\quad \text{with } \eta^\pm=1\pm i.
$$
For the terms  $ \alpha_\pm e^{\pm i\tau}(1,\pm i)$,
we rather use the following Ansatz (see \cite{forcage-resonant})
\be\label{def:ustat}
\ustat(t,x_h,z)=\psi\left( \frac{z}{\sqrt{\nu t}} \right)\sum_{\pm}\alpha_\pm(t) e^{\pm i\frac{t}{\e}}\begin{pmatrix}
                                         1\\ \pm i
                                        \end{pmatrix}.
\ee
In order that $\ustat$ is an approximate solution of (the linear part of) equation \eqref{eq:depart}, the function $\psi$ must be such that
$$\begin{aligned}
   -\frac{X}{2}\psi'(X)-\psi''(X)=0,\\
\psi_{|X=0}=1,\quad \psi_{|X=+\infty}=0.
  \end{aligned}
$$
which yields
$$
\psi(X)=\frac{1}{\sqrt{\pi}}\int_X^\infty \exp\left( -\frac{u^2}{4} \right)\:du.
$$
With this definition, $\ustat(t)$ vanishes outside a layer of size $\sqrt{\nu t}$ localized near the bottom of the fluid. Hence $\ustat$ is an approximate solution of the linear part of equation \eqref{eq:depart}, and $\ustat_{|z=a}$ is exponentially small.
\vskip1mm

Now, set
$$
u_B(t,\tau,x_h,\zeta):=\sum_{\substack{k\in\Z^3,\\k_h\neq 0}} u_{B,h,k}\left( t,\tau, x_h,\zeta\right) + \sum_\pm \gamma_\pm (t) e^{\pm i\tau-\eta^\pm\zeta} \begin{pmatrix}  1\\ \mp i\end{pmatrix}.
$$
The complete boundary layer term at the bottom $\ubl_B$ is given by
$$
\ubl_B(t,x_h,z)=u_B\left( t,\frac{t}{\e}, x_h,\frac{z}{\sqrt{\e\nu}} \right)+ \ustat(t,x_h,z).
$$

We now give some estimates on the boundary layer terms constructed in this paragraph:
\begin{lemma}
 Let $u_B$ be defined by \eqref{def:uB}-\eqref{def:uB3} and $u^{\text{stat}}$ by \eqref{def:ustat}. Then the following estimates hold, for all $t>0$
$$\begin{aligned}
 \left\| u_{B,h}(t),\ \zeta\p_\zeta u_{B,h}(t)\right\|_{L^\infty([0,\infty)_\tau,L^2(\T^2\times [0,\infty)_\zeta))}\leq C \left( \sum_{\substack{k\in\Z^3,\\k_h\neq 0}} \left| \hat c_{B,h}(t,k) \right|^2  \frac{|k|}{|k_h|}  |k_3|^2  \right)^{\frac{1}{2}},\\
\left\| u_{B,h}(t),\ \zeta\p_\zeta u_{B,h}(t)\right\|_{L^\infty([0,\infty)_\tau\times\T^2\times [0,\infty)_\zeta)}\leq C \sum_{\substack{k\in\Z^3,\\k_h\neq 0}} \left| \hat c_{B,h}(t,k) \right|,\\
\left\| u_{B,3}(t),\ \zeta\p_\zeta u_{B,3}(t)\right\|_{L^\infty([0,\infty),L^2(\T^2\times [0,\infty)))}\leq C\sqrt{\e\nu} \left( \sum_{\substack{k\in\Z^3,\\k_h\neq 0}} \left| \hat c_{B,h}(t,k) \right|^2  \frac{|k|^3}{|k_h|} |k_3|^2  \right)^{\frac{1}{2}},\\
\left\| u_{B,3}(t),\ \zeta\p_\zeta u_{B,3}(t)\right\|_{L^\infty([0,\infty)\times\T^2\times [0,\infty))}\leq C\sqrt{\e\nu} \sum_{\substack{k\in\Z^3,\\k_h\neq 0}} |k|\,\left| \hat c_{B,h}(t,k) \right|,
\end{aligned}
$$
and
$$\begin{aligned}
\left\| \ustat(t),\ z\p_z \ustat(t) \right\|_{L^2(\U)}\leq C (\nu t)^{1/4}\sum_{k_3\in\Z^*}\left|\hat c_B(t,0,k_3)\right|,\\
\left\| \ustat(t),\ z\p_z \ustat(t) \right\|_{L^\infty(\U)}\leq C\sum_{k_3\in\Z^*}\left|\hat c_B(t,0,k_3)\right|.
\end{aligned}
$$

\label{lem:est_UB}
\end{lemma}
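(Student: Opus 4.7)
The plan is to decompose $u_B$ into horizontal Fourier modes via \eqref{def:uB}--\eqref{def:uB3}, estimate each mode separately using the explicit exponential profile in $\zeta$, and then reassemble the bounds by Parseval (for $L^2$) or by the triangle inequality (for $L^\infty$). The key analytic ingredient is a lower bound on the real part of the exponential decay rate: since $\eta_k^\pm = \sqrt{1\mp\lambda_k}(1\pm i)/\sqrt 2$ with $\lambda_k=-k_3'/|k'|$, the identity $1\mp \lambda_k = |k_h'|^2/(|k'|(|k'|\pm k_3'))$ yields
$$
\mathrm{Re}(\eta_k^\pm)\;\geq\; c\,\frac{|k_h'|}{|k'|},\qquad |\eta_k^\pm|=\sqrt2\,\mathrm{Re}(\eta_k^\pm).
$$
All the bounds follow from this ratio. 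Note also that the auxiliary functions $e^{-\eta_k^\pm\zeta}$ and $\zeta\p_\zeta e^{-\eta_k^\pm\zeta}=-\zeta\eta_k^\pm e^{-\eta_k^\pm\zeta}$ satisfy $\|\zeta\eta_k^\pm e^{-\eta_k^\pm\zeta}\|_{L^\infty(\R_+)}\leq \sqrt 2/e$ (because $|\eta_k^\pm|/\mathrm{Re}(\eta_k^\pm)=\sqrt 2$), so both sets of bounds reduce to the same computation.

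For the pointwise ($L^\infty$) estimate, one simply applies the triangle inequality: $|u_{B,h}|\leq \sum_{k_h\neq 0,\pm}|w_k^\pm|$, and since $|w_k^\pm|\leq |\hat c_{B,h}(t,k)|$ the desired bound follows. The same holds for $\zeta\p_\zeta u_{B,h}$. For $u_{B,3}$ and $\zeta\p_\zeta u_{B,3}$, the factor $ik_h'/\eta_k^\pm$ contributes an additional $|k_h'|/|\eta_k^\pm|\leq C|k|$ (using the lower bound on $\mathrm{Re}(\eta_k^\pm)$), giving the $|k|$-weighted bound stated in the Lemma, with the $\sqrt{\e\nu}$ prefactor from \eqref{def:uB3}.

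For the $L^2$ bound the crucial step is a Cauchy--Schwarz with a well-chosen weight. After Parseval in $x_h$ and taking $L^\infty$ in $\tau$, one is reduced to controlling, for each fixed $k_h\neq 0$,
$$
A_{k_h}(\zeta):=\sum_{k_3\in\Z,\pm}|w_k^\pm|\,e^{-\mathrm{Re}(\eta_k^\pm)\zeta}.
$$
Writing $|w_k^\pm|=\bigl(|w_k^\pm||k_3|(|k|/|k_h|)^{1/2}\bigr)\cdot\bigl(|k_3|^{-1}(|k_h|/|k|)^{1/2}\bigr)$ and applying Cauchy--Schwarz, one obtains
$$
|A_{k_h}(\zeta)|^2\;\leq\;\Bigl(\sum_{k_3,\pm}|w_k^\pm|^2\tfrac{|k|}{|k_h|}|k_3|^2\Bigr)\Bigl(\sum_{k_3,\pm}\tfrac{|k_h|}{|k||k_3|^2}e^{-2\mathrm{Re}(\eta_k^\pm)\zeta}\Bigr).
$$
Integrating in $\zeta$ produces a factor $1/(2\mathrm{Re}(\eta_k^\pm))\leq C|k|/|k_h|$ which cancels the weight $|k_h|/|k|$, leaving the summable series $\sum_{k_3\neq 0}|k_3|^{-2}$ (the $k_3=0$ mode can be treated separately as it has no singularity). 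Summing over $k_h$ yields exactly the bound in the statement. The estimate on $u_{B,3}$ is identical, with the extra $|k|^2$ coming from the $ik_h'/\eta_k^\pm$ factor, and the $\zeta\p_\zeta$ variants reduce to the same integral using $|\eta_k^\pm|=\sqrt 2\,\mathrm{Re}(\eta_k^\pm)$. The main subtlety here is the choice of the weight $|k_3|^2$, which is dictated by the need to have both a convergent series in $k_3$ and a correct cancellation of the $|k|/|k_h|$ factor produced by the $\zeta$-integration.

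Finally, the bounds on $\ustat$ follow by the change of variable $X=z/\sqrt{\nu t}$. Since $\psi$ and $X\psi'(X)$ are bounded on $\R_+$ with finite $L^2(\R_+)$ norm, one has $\|\psi(\cdot/\sqrt{\nu t}),\ z\p_z\psi(\cdot/\sqrt{\nu t})\|_{L^\infty(0,a)}\leq C$ and $\|\psi(\cdot/\sqrt{\nu t}),\ z\p_z\psi(\cdot/\sqrt{\nu t})\|_{L^2(0,a)}\leq C(\nu t)^{1/4}$. Combined with the triangle inequality $|\alpha_\pm(t)|\leq C\sum_{k_3\neq 0}|\hat c_B(t,0,k_3)|$, this yields both $L^\infty$ and $L^2$ bounds on $\ustat$ and $z\p_z\ustat$ as stated.
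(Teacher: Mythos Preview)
Your approach is correct and is precisely what the paper intends: the paper leaves this lemma to the reader, supplying only the hint $C|k_h|/|k|\le|\eta_k^\pm|\le 1$, and your argument is the natural way to flesh this out. One harmless slip: your displayed identity for $1\mp\lambda_k$ has the wrong sign in the denominator (since $\lambda_k=-k_3'/|k'|$ one gets $1\mp\lambda_k=(|k'|\pm k_3')/|k'|=|k_h'|^2/(|k'|(|k'|\mp k_3'))$), but the two-sided bound $c\,|k_h'|/|k'|\le\mathrm{Re}(\eta_k^\pm)\le C$ that you use is unaffected.
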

The proof of the above Lemma is left to the reader. Notice that according to the definition of $\eta_k^\pm$, we have
$$
C\frac{ |k_h|}{|k|}\leq \left| \eta_k^\pm\right|\leq 1\quad\forall k\in \Z^3.
$$

\begin{corollary}
Assume that there exists $N>0$ such that
$$
\hat c_{B,h}(t,k)=0\quad \text{if }|k|\geq N,\quad\forall t\geq 0.
$$ 
Then the boundary layer term $\ubl_B$ is an approximate solution of the linear part of equation \eqref{eq:depart}. Precisely, there exists a constant $C_N$, depending only on $N$, such that
\begin{multline*}
 \left\| \p_t \ubl_B + \frac{1}{\e} \ubl_B -\nu\p_z^2 \ubl_B -\Delta_h \ubl_B
 \right\|_{L^\infty([0,T], L^2(\U))}\\
\leq C_N \nu^{1/4} \sup_{t\in[0,T]} \left(\sum_{k\in\Z^3} \left(|\hat c_{B,h}(t,k)|^2 + |\p_t \hat c_{B,h}(t,k)|^2 \right)\right)^{1/2} 
\end{multline*}

\label{cor:est_UB}

\end{corollary}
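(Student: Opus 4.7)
\textbf{Proof plan for Corollary \ref{cor:est_UB}.}

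The strategy is to split $\ubl_B = u_B(t,t/\e,x_h,z/\sqrt{\e\nu}) + \ustat(t,x_h,z)$ and verify directly that each piece cancels the dominant action of the linear operator $\cL_\e := \p_t + \frac{1}{\e} e_3\wedge\cdot - \nu\p_z^2 - \Delta_h$, up to remainders that we then estimate using Lemma \ref{lem:est_UB} together with the finite-mode hypothesis $\hat c_{B,h}(t,k)=0$ for $|k|\geq N$. The chain rule gives $\p_t \mapsto \p_t + \frac{1}{\e}\p_\tau$ on the argument $\tau = t/\e$, and $\nu\p_z^2 \mapsto \frac{1}{\e}\p_\zeta^2$ on the argument $\zeta = z/\sqrt{\e\nu}$.

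\textbf{Contribution of $u_B$.} Grouping the $O(\e^{-1})$ terms, what appears is $\frac{1}{\e}(\p_\tau u_B + e_3\wedge u_B - \p_\zeta^2 u_B)$, which vanishes \emph{exactly} by the definitions in the two cases $k_h\neq 0$ (formulas \eqref{def:uB}--\eqref{def:uB3}, exact solutions of \eqref{eq:ubl0}) and $k_h=0$ (the $\gamma_\pm e^{\pm i\tau - \eta^\pm\zeta}$ modes, for which $(\eta^\pm)^2 = 1\pm i\cdot(\text{eigenvalue})$ by design). The leftover $\p_t u_B - \Delta_h u_B$ has exactly the same structure as $u_B$ itself but with $\hat c_{B,h}(t,k)$ replaced by $\p_t \hat c_{B,h}(t,k)$ or by $-|k_h'|^2 \hat c_{B,h}(t,k)$. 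Applying Lemma \ref{lem:est_UB} in the variable $\zeta$ and using the change of variables $z=\sqrt{\e\nu}\,\zeta$ (which yields a factor $(\e\nu)^{1/4}$ on $L^2_z$ norms), together with the bound $|k|\leq N$ which turns the $\ell^2$-weighted sums in the Lemma into plain $\ell^2$ sums up to a constant $C_N$, produces
$$
\|\p_t u_B - \Delta_h u_B\|_{L^\infty_t L^2(\U)} \leq C_N (\e\nu)^{1/4} \sup_{t\in[0,T]}\Bigl(\sum_{k\in\Z^3}\bigl(|\hat c_{B,h}(t,k)|^2+|\p_t \hat c_{B,h}(t,k)|^2\bigr)\Bigr)^{1/2}.
$$

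\textbf{Contribution of $\ustat$.} Here two separate cancellations occur. First, the vectors $(1,\pm i)^T$ are eigenvectors of $e_3\wedge\cdot$ with eigenvalues $\mp i$, so
$$
\bigl(\p_t + \tfrac{1}{\e}e_3\wedge\cdot\bigr)\bigl[e^{\pm it/\e}(1,\pm i)^T\bigr] = \bigl(\tfrac{\pm i}{\e}+\tfrac{\mp i}{\e}\bigr) e^{\pm it/\e}(1,\pm i)^T = 0,
$$
which is precisely why the \emph{resonant} boundary layer cannot use the exponential profile and must instead use $\psi$. Second, the profile equation $\psi''(X) + \tfrac{X}{2}\psi'(X)=0$ implies $(\p_t - \nu\p_z^2)[\psi(z/\sqrt{\nu t})]=0$. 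Since $\alpha_\pm(t)$ is $x_h$-independent, $\Delta_h \ustat = 0$, and the full remainder is just
$$
\cL_\e \ustat = \sum_\pm \alpha_\pm'(t)\,e^{\pm it/\e}\,\psi\!\left(\tfrac{z}{\sqrt{\nu t}}\right)(1,\pm i,0)^T.
$$
By the $L^2$-part of Lemma \ref{lem:est_UB}, this is bounded in $L^\infty([0,T],L^2(\U))$ by $C (\nu T)^{1/4}\sup_t |\alpha_\pm'(t)|$, and the $\alpha_\pm'$ involve only $\p_t \hat c_{B,h}(t,0,k_3)$ with $|k_3|\leq N$, hence is itself bounded by $C_N\nu^{1/4}\sup_{t\in[0,T]}(\sum_k|\p_t\hat c_{B,h}(t,k)|^2)^{1/2}$.

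\textbf{Assembly.} Summing the two contributions and using $(\e\nu)^{1/4}\leq \nu^{1/4}$ (since $\e \leq 1$ in the regime considered) gives the stated $C_N \nu^{1/4}$ bound. The main technical point — and the only place where something nontrivial must be checked — is the second cancellation above for $\ustat$: the fact that the choice of eigenvectors of $e_3\wedge$ matches the frequency $\pm 1/\e$ of the forcing, while the self-similar profile $\psi$ is adapted to the heat operator $\p_t - \nu\p_z^2$, so that all $O(1/\e)$ and $O(1/t)$ terms cancel in the equation, leaving only the slow time derivative of the amplitudes $\alpha_\pm$. Everything else is routine bookkeeping of change-of-variables factors and summation over the finite set $\{|k|\leq N\}$.
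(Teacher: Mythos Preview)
Your argument is essentially the paper's own proof, down to identifying the error as $(\p_t-\Delta_h)u_B$ plus $\psi(z/\sqrt{\nu t})\sum_\pm\alpha_\pm'(t)e^{\pm it/\e}$; you actually supply more detail than the paper on the two cancellations behind $\ustat$. One minor point (which the paper's proof also elides): your claim that the $O(\e^{-1})$ block ``vanishes exactly'' is only literally true for the horizontal components, since $u_{B,3}$ in \eqref{def:uB3} is defined by the divergence constraint and does not solve \eqref{eq:ubl0}; the residual $\tfrac{1}{\e}(\p_\tau-\p_\zeta^2)u_{B,3}$ is nonzero, but the prefactor $\sqrt{\e\nu}$ on $u_{B,3}$ and the standing hypothesis $\nu=O(\e)$ keep it within the $C_N\nu^{1/4}$ budget after the $L^2_z$ rescaling.
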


\begin{proof}
By construction, $\ubl_B$ is an approximate solution of the linear part of equation \eqref{eq:depart}, with an error term equal to
$$
\left[ (\p_t-\Delta_h )u_B\right] \left(t,\frac{t}{\e},x_h,\frac{z}{\sqrt{\e\nu}}\right)+ \varphi\left( \frac{z}{\sqrt{\nu t}} \right)\sum_\pm \p_t\alpha^\pm(t) e^{\pm i\frac{t}{\e}},
$$
where $\p_t$ is the derivation operator with respect to the macroscopic time variable. Thanks to the assumption on the coefficients $\hat c_{B,h}$, we have
$$
\left[\int_0^T\int_{\U}\int_E\left|\p_t u_B\left(t,\frac{t}{\e},x_h,\frac{z}{\sqrt{\e\nu}}\right) \right|^2dm_0(\om)\:dz\:dx_h\:dt\right]^{1/2}\leq C_N\left[ (\e\nu)^{1/4}+\nu^{1/4} \right] \|\p_t c_{B,h} \|
$$
whereas the term $\Delta_h u_B$ is bounded in $L^\infty(E, L^2([0,T], H^{-1,0}))$ by $ C_{N}(\e\nu)^{1/4} \| c_{B,h} \| $.
At last, the error term due to $\ustat$ satisfies
$$
\left\| \varphi\left( \frac{z}{\sqrt{\nu t}} \right)\sum_\pm \p_t\alpha^\pm(t) e^{\pm i\frac{t}{\e}} \right\|_{L^\infty(E,L^2(\U))}\leq C_{N}\nu^{1/4}\|\p_t c_{B,h} \|.
$$\end{proof}

\section{The solution in the interior at main order}

\label{sec:interior}

This section is devoted to the construction of the first order interior terms in expansion \eqref{Ansatz}. At this stage, we merely know how to construct boundary layer terms which deal with the horizontal part of the boundary conditions \eqref{CL}; moreover, following the analysis in paragraph \ref{ssec:gal}, we expect $u^{\e,\nu}(t)$ to behave like $\exp(-tL/\e) w(t)$ in the interior. Thus the idea is to define a function
$$
\uint(t):=\exp\left( -\frac{t}{\e}L \right) w(t) + \vint\left(t,\frac{t}{\e}\right) + \delta \uint\left(t,\frac{t}{\e}\right),
$$
where $\vint$ and $\delta \uint$ are corrector terms, such that $\uint$ is an approximate solution of \eqref{eq:depart}. We also require that 
$$
\uint + u^{\text{BL}, \delta}_T + u_B^\text{BL}
$$
satisfies the boundary conditions \eqref{CL} at main order.

Let us now explain the role of the correctors $\vint$ and $\delta \uint$: it can be checked in the formulas of the previous section that the third components of the boundary layer terms do not vanish at the boundary: indeed, one has 
$$
u^{\text{BL}, \delta}_{T,3|z=a}=\cO (\beta \e\nu\| \sigma\|),\quad u^{\text{BL}}_{B,3|z=0}=\cO(\sqrt{\e\nu}\|w\|).
$$
The role of the corrector $\vint$ is precisely to lift these boundary conditions and to restore the zero-flux conditions at the bottom and at the surface.
Consequently, the term $\vint$ has fast oscillations (at frequencies of order $\e^{-1}$), and in general, $\vint$ is not an approximate solution of \eqref{eq:depart}. Filtering out the oscillations in the term
$$
\frac{\p \vint}{\p t} + \frac{1}{\e}e_3\wedge \vint 
$$
yields the source terms $S_B$ and $S_T$ in the equation satisfied by $w$ (see equation \eqref{eq:barw}). The remaining oscillating terms in the expression above are then taken care of through the addition of the corrector $\delta \uint$.

The organization of this section is as follows: first, we deal with the corrector $\vint$, by giving a precise definition and explaining how oscillations are filtered. Then we derive the equation on the function $w$; in general, this equation depends on the small parameter $\delta$, introduced when constructing the boundary layer terms at the surface. Thus in the  third paragraph, we identify the limit as $\delta \to 0$ of the function $w$, which yields the envelope equation. Eventually, the fourth and last paragraph is concerned with the definition of the corrector $\delta \uint.$

\subsection{Lifting the vertical boundary conditions}

In the rest of this section, we set
\begin{eqnarray*}
 c_{B,3}(t,\tau,x_h)&:=& - \frac{1}{\sqrt{\e\nu}}u_{B,3|\zeta=0}(t,\tau,x_h)\\
&=&-\sum_{\substack{k\in\Z^3,\\k_h\neq 0}}\sum_\pm\frac{ik'_h\cdot w_k^\pm}{\eta_k^\pm}e^{ik'_h\cdot x_h}e^{-i\lambda_k\tau},
\end{eqnarray*}
where 
$$
w_k^\pm(t)=-\frac{1}{2}\mean{N_k,w(t)}\begin{pmatrix}
                             n_1(k) \pm i  n_2(k)\\n_2(k)\mp i n_1(k)
                            \end{pmatrix}
$$
and $\eta_k^\pm$ was defined in the previous section.  In order to shorten the notation, we set
\begin{eqnarray*}
\hat c_{B,3}(t,\tau,k_h)&:=&\int_{\T^2} c_{B,3}(t,x_h) e^{-ik_h'\cdot x_h} dx_h\\
&=&-a_1a_2\sum_{k_3\in \Z}\sum_\pm\frac{ik'_h\cdot w_k^\pm(t)}{\eta_k^\pm}e^{-i\lambda_k\tau}.
\end{eqnarray*}
The function $w$ will be defined in the next section.

Similarly, we set
\begin{eqnarray*}
 c_{T,3}(t,\tau,x_h;\om)&:=&- \frac{1}{\beta{\e\nu}}u^\delta_{T,3|\zeta=0}(t,\tau,x_h;\om)\\
&=&\frac{1}{2}\sum_\pm \int_0^\infty  \left[ \dv_h \sigma \mp i\rot_h\sigma \right](t,\tau-s,x_h;\om)e^{-\delta s \pm is} ds.
\end{eqnarray*}
We also set $\hat c_{T,3}(t,\tau,k_h;\om)=\int_{\T^2}c_{T,3}(t,x_h;\om)e^{-ik_h'\cdot x_h}dx_h,$ so that
$$
\hat c_{T,3}(t,\tau, k_h)=\frac{1}{2}\sum_\pm
\int_0^\infty \hat \sigma^\pm (t,\tau-s, k_h) e^{-\delta s \pm i s}ds.$$

With the above definitions, the function $c_{B,3}$ is quasi-periodic with respect to the fast time variable $\tau$, whereas $c_{T,3}$ is random and stationary with respect to the fast time variable $\tau$.

$\bullet$  \textbf{Defintion of $\vint$. }We now define a function $\vint$ which is divergence free and such that
\be
\begin{aligned}
\vint_{3|z=0}(t,x_h)=\sqrt{\e\nu}c_{B,3}\left(t,\frac{t}{\e},x_h  \right),\\
\vint_{3|z=a}(t,x_h)=\e\nu\beta c_{T,3}\left(t,\frac{t}{\e},x_h  \right).
\end{aligned}\label{CB:uint}\ee
Of course, conditions \eqref{CB:uint} do not determine $ \vint$ unequivocally. A possible choice is
\begin{eqnarray}
\vint_3(t,\tau,x)&=&\frac{1}{a} \left[\e\nu\beta c_{T,3}\left(t,\tau,x_h\right) z + \sqrt{\e\nu} c_{B,3}\left(t,\tau,x_h\right)(a-z)\right]\label{def:v3},\\
\vint_h(t,\tau,x)&=&\frac{1}{a} \nabla_h \Delta_h^{-1}\left[\sqrt{\e\nu} c_{B,3}\left(t,\tau,x_h\right) -\e\nu\beta  c_{T,3}\left(t,\tau,x_h\right)\right].\label{def:vh}
\end{eqnarray}

In fact, since $c_{B,3}$ is an  almost periodic function,  a more convenient choice can be made, which is the so-called ``non-resonant'' choice in \cite{MasmoudiCPAM}. In this case, the equation on $\delta \uint$ is slightly more simple, since there is no source term due to $c_{B,3}$. However, we have chosen here not to distinguish  between stationary and almost periodic boundary conditions, and thus to work with the expressions \eqref{def:v3}, \eqref{def:vh}.

%%%%%%%%%%%%%%%%%%%%%%%%%%%%%%%%%%%%%%%%%%%%%%%%%%%%%%%%%%%%%%%%%%%%%%%%%%%%%%%%%%%%%%%%%%%%%%%%%%%%%%%%%%%%%%%%%%%%%%%%%%%%%%
%%%%%%%%%%%%%%%%%%%%%%%%%%%%%%%%%%%%%%%%%%%%%%%%%%%%%%%%%%%%%%%%%%%%%%%%%%%%%%%%%%%%%%%%%%%%%%%%%%%%%%%%%%%%%%%%%%%%%%%%%%%%%%

$\bullet$ \textbf{Filtering the oscillations.} We give here the statement and proof of a Lemma which will be useful in the construction of $\delta \uint$ and $w$.

\begin{lemma}Let $T>0$ be arbitrary.

Let $v \in L^\infty([0,T]\times [0,\infty)_\tau, L^2(\T^2\times E))$ such that $\p_\tau v \in  L^\infty([0,T]\times [0,\infty)_\tau, L^2(\T^2\times E))$ and
\begin{eqnarray}
\dv v=0,\label{condV1}\\ 
v_{3|z=0}(t,\tau,x_h)=\sqrt{\e\nu}c_{B,3}(t,\tau,x_h),\label{condV2}\\
v_{3|z=a}(t,\tau,x_h)=\beta{\e\nu}c_{T,3}(t,\tau,x_h)\label{condV3}.
\end{eqnarray}

Then as $\theta\to\infty$, the family
$$
S_\theta:=\frac{1}{\theta}\int_0^\theta\mathcal L(-\tau)\Proj\left[\p_\tau  v+  e_3\wedge v  \right]\:d\tau
$$
converges almost everywhere and in $L^\infty([0,T], L^2(\T^2\times[0,a]\times E))$, and its limit does not depend on the choice of the function $v$. Precisely, 
\be
\lim_{\theta\to\infty}S_\theta=:\bar S[c_{B,3}, c_{T,3}]= \frac{1}{\sqrt{{a}{a_1a_2}}}\sum_{k\in \Z^3}\frac{|k_h'|}{|k'|^2}  \cE_{-\lambda_k}\left[\sqrt{\e\nu}\hat c_{B,3}(k_h)- (-1)^{k_3}\beta{\e\nu} \hat c_{T,3}(k_h) \right]N_k.\label{lim:Stheta}
\ee
\label{lem:source}

\end{lemma}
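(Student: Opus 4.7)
The plan is two-fold: first establish that the limit does not depend on the choice of $v$ satisfying \eqref{condV1}--\eqref{condV3}, then compute it explicitly on the lifting \eqref{def:v3}--\eqref{def:vh}.

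\emph{Independence.} If $v,v'$ are two admissible choices, then $w:=v-v'$ satisfies $\dv w=0$ and $w_3=0$ on $\{z=0,a\}$, so $w\in\cH$ pointwise in $(t,\tau)$. For $w\in\cH$, $\Proj[\p_\tau w+e_3\wedge w]=\p_\tau w+Lw$; since $\cL(-\tau)$ commutes with $L$, the integrand equals $\frac{d}{d\tau}[\cL(-\tau)w]$, so the Cesàro average telescopes to $\theta^{-1}[\cL(-\theta)w(\theta)-w(0)]$, which tends to $0$ in $L^\infty([0,T],L^2(\U\times E))$ by the boundedness of $v,v'$.

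\emph{Reduction to modes.} Fix now the specific $v$ from \eqref{def:v3}--\eqref{def:vh}; the key features are that $v_h$ is independent of $z$ and $v_3$ is affine in $z$. Using the skew-adjointness of $L$ on $\cH$ and $\cL(\tau)N_k=e^{-i\lambda_k\tau}N_k$, one rewrites
\[
\mean{N_k,S_\theta}=\frac{1}{\theta}\int_0^\theta e^{i\lambda_k\tau}\mean{N_k,\p_\tau v+e_3\wedge v}\,d\tau,
\]
which by Proposition \ref{prop:ergodic} (the integrand being stationary in $\tau$) converges almost surely and in $L^2(E)$ to $\cE_{-\lambda_k}[\mean{N_k,\p_\tau v+e_3\wedge v}]$.

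\emph{Mode computation.} Three cases arise. When $k_h\neq 0$ and $k_3\neq 0$, the identity $\int_0^a\cos(k_3'z)\,dz=0$ eliminates the horizontal contributions to both $\mean{N_k,v}$ and $\mean{N_k,e_3\wedge v}$, leaving only $\overline{n_3(k)}\int_0^a\sin(k_3'z)\hat v_3\,dz$ for the former and zero for the latter; the explicit integrals $\int_0^a z\sin(k_3'z)\,dz=-a(-1)^{k_3}/k_3'$ and $\int_0^a(a-z)\sin(k_3'z)\,dz=a/k_3'$, combined with $\cE_{-\lambda_k}[\p_\tau\psi]=-i\lambda_k\cE_{-\lambda_k}[\psi]$ and the dispersion relation $\lambda_k=-k_3'/|k'|$, produce the prefactor $|k_h'|/(\sqrt{aa_1a_2}\,|k'|^2)$. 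When $k_h\neq 0$ and $k_3=0$, $N_{k,3}=0$ and only the horizontal parts contribute: $\mean{N_k,v}=0$ by orthogonality of $\overline{n_h(k)}$ and $k_h'$, while $\mean{N_k,e_3\wedge v}$ directly yields the right coefficient (using $|k'|=|k_h'|$ and $(-1)^{k_3}=1$). When $k_h=0$, the gradient structure of $v_h$ forces $\hat v_h(0)=0$, so both projections vanish, consistent with the $|k_h'|$ in the numerator of \eqref{lim:Stheta}.

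The main technical point, beyond the Fourier bookkeeping in the first case, is upgrading the mode-by-mode convergence to convergence of the full series $S_\theta=\sum_k\mean{N_k,S_\theta}N_k$ in $L^\infty([0,T],L^2(\U\times E))$. This follows from dominated convergence on the coefficients: $\hat c_{T,3}$ is a finite sum in $k_h$ by the standing assumption that $\sigma$ has finitely many horizontal Fourier modes, and $\hat c_{B,3}$ is controlled by its explicit expression in terms of $w$, yielding a summable envelope uniform in $\theta$.
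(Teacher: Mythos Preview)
Your argument is correct and follows essentially the same route as the paper's proof: the same telescoping for independence, the same explicit computation of $\mean{N_k,\p_\tau v}$ and $\mean{N_k,e_3\wedge v}$ on the lifting \eqref{def:v3}--\eqref{def:vh} (your case split is just a reorganization of the paper's $\mathbf 1_{k_3\neq 0}/\mathbf 1_{k_3=0}$ bookkeeping), and the same reduction to Proposition~\ref{prop:ergodic} mode by mode. Two small points worth tightening: your appeal to stationarity via Proposition~\ref{prop:ergodic} does not literally cover $c_{B,3}$, which is almost periodic rather than random stationary---the paper handles this by the trivial extension of $\cE_\lambda$ to almost periodic functions noted in the Remark following the lemma; and for the passage from modewise to $L^2$ convergence, the paper makes the uniform tail control explicit by truncating at $|k_h|\leq A$ and using the $L^2(\T^2,L^\infty)$ bound on $c_{B,3},c_{T,3},\p_\tau c_{B,3},\p_\tau c_{T,3}$ before applying Lebesgue on the remaining (summable in $k_3$) modes, which is slightly sharper than your ``summable envelope'' sketch.
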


\begin{remark}
In the above Lemma, the operator $\cE_\lambda$, which was originally defined for random stationary functions, has been extended to almost periodic functions: if
$$
c(\tau)=\sum_{\mu\in M} \alpha_\mu e^{i\mu\tau},
$$ 
with $\sum_\mu |\alpha_\mu|<\infty$, then
$$
\cE_\lambda[c]:=\lim_{\theta\to\infty} \frac{1}{\theta}\int_0^\theta e^{-\lambda\tau}c(\tau) \:d\tau=\mathbf 1_{\lambda=\mu}\alpha_\mu.
$$

\end{remark}

%%%%%%%%%%%%%%%%%%%%%%%%%%%%%%%%%%%%%%%%%%%%%%%%%%%%%%%%%%%%%%%%%%%%%%%%%%%%%%%%%%%%%%%%%%%%%%%%%%%%%%%%%%%%%%%%%%%%%%%%%%%%%%
%%%%%%%%%%%%%%%%%%%%%%%%%%%%%%%%%%%%%%%%%%%%%%%%%%%%%%%%%%%%%%%%%%%%%%%%%%%%%%%%%%%%%%%%%%%%%%%%%%%%%%%%%%%%%%%%%%%%%%%%%%%%%%

\begin{proof}Let $v^1, v^2$ be two solutions of \eqref{condV1}-\eqref{condV3}, and let $V=v^1-v^2$. Notice that $V\in L^\infty([0,T]\times[0,\infty)_\tau; L^2(E,\mathcal H)),$ and $\p_\tau V\in L^\infty([0,T]\times[0,\infty)_\tau; L^2(E\times \U)).$
We write$$
 \mathcal L(-\tau)\Proj\left[\p_\tau  V + e_3\wedge  V  \right] =  \mathcal L(-\tau)\left[\p_\tau  V + L V \right]= \frac{\p}{ \p \tau }\left[ \mathcal L(-\tau) V(\tau)\right].$$
Consequently,
$$
\frac{1}{\theta}\int_0^\theta\mathcal L(-\tau)\left[\p_\tau  V + \Proj\left( e_3\wedge  V\right)  \right] \:d\tau=\frac{\mathcal L(-\theta) V_{|\tau=\theta}- V_{|\tau=0}}{\theta}.
$$
The right-hand side of the above equality vanishes in $L^\infty([0,T]\times E,L^2(\U))$ as $\theta\to\infty$. Hence the limit is independent of the choice of $v$.
 
In order to complete the proof of the lemma, it is thus sufficient to show that the limit exists for the choice \eqref{def:v3}-\eqref{def:vh}, and to compute the limit in this case. First, we recall that for any function $F\in L^2(\U)$, we have
$$
\Proj F=\sum_{k\in\Z^3,k\neq 0}\mean{N_k, F} N_k.
$$It can be readily checked that if $k_h=0$, then $\mean{N_k,\p_\tau\vint}=0$. 
Thus for all $k=(k_h,k_3)\in\Z^3$ such that $k_h\neq 0$, we compute
\begin{eqnarray*}
\mean{N_k, \p_\tau \vint }&=&\frac{1}a\int_0^a \cos(k_3' z)\overline{n_h(k)}\cdot\frac{ik'_h}{|k'_h|^2}\left(  \e\nu\beta\p_\tau \hat c_{T,3}(\cdot,k_h)- \sqrt{\e\nu}\p_\tau \hat c_{B,3}(\cdot,k_h)\right)\:dz\\
&+&\frac1a\int_0^a \overline{n_3(k)}\sin( k'_3 z)\left( \sqrt{\e\nu}\p_\tau \hat c_{B,3}(\cdot,k_h)(a-z)+ \e\nu\beta\p_\tau  \hat c_{T,3}(\cdot,k_h)z\right)\:dz\\
&=&\overline{n_3(k)}\frac{\mathbf 1_{k_3\neq 0}}{ k'_3}\left[ \sqrt{\e\nu}\p_\tau \hat c_{B,3}(\cdot,k_h)- (-1)^{k_3} \e\nu\beta\p_\tau  \hat c_{T,3}(\cdot,k_h) \right]\\
&+& \mathbf1_{k_3=0}\;\overline{n_h(k)}\cdot\frac{ik'_h}{|k'_h|^2}\left(  \e\nu\beta\p_\tau  \hat c_{T,3}(\cdot,k_h)-\sqrt{\e\nu}\p_\tau \hat c_{B,3}(\cdot,k_h)\right).
\end{eqnarray*}
Notice that if $k_3=0$, then
$$
\overline{n_h(k)}\cdot k'_h=0;
$$
consequently, we have
$$
\mean{N_k, \p_\tau \vint }=-\frac{i}{\sqrt{aa_1a_2}}\frac{\mathbf 1_{k_3\neq 0}|k_h'|}{|k'|k_3'}\left[  \sqrt{\e\nu}\p_\tau \hat c_{B,3}(\cdot,k_h)- (-1)^{k_3}\e\nu\beta\p_\tau  \hat c_{T,3}(t,\tau,k_h;\om) \right].
$$
In a similar way,
\begin{eqnarray*}
 \mean{N_k, e_3\wedge \vint }&=&\frac1a\int_0^a \cos( k'_3 z)\overline{n_h(k)}\cdot \frac{i(k'_h)^{\bot}}{|k'_h|^2}\left( \e\nu\beta\hat c_{T,3}(\cdot,k_h)-  \sqrt{\e\nu}\hat c_{B,3}(\cdot,k_h)\right)\:dz\\
&=&\mathbf 1_{k_3=0}\overline{n_h(k)}\cdot \frac{i(k'_h)^{\bot}}{|k'_h|^2}\left( \e\nu\beta\hat c_{T,3}(\cdot,k_h)-  \sqrt{\e\nu}\hat c_{B,3}(\cdot,k_h)\right)\\
&=&\frac{\mathbf 1_{k_3=0}}{\sqrt{aa_1a_2}}\frac{1}{|k'_h|}\left( \sqrt{\e\nu}\hat c_{B,3}(\cdot,k_h)- \e\nu\beta \hat c_{T,3}(\cdot,k_h)\right).
\end{eqnarray*}
We deduce from the above calculations that
\begin{eqnarray}\label{expr:Sigma}
 &&\mathcal L(-\tau)\Proj(\p_\tau\vint + e_3\wedge \vint )\\\nonumber&=&-\frac{i}{\sqrt{aa_1a_2}}\sum_{k\in \Z^3}\frac{\mathbf 1_{k_3\neq 0}|k'_h|}{k'_3 |k'|}e^{i\lambda_k\tau}\left[ \sqrt{\e\nu} \p_\tau  \hat c_{B,3}- (-1)^{k_3}\e\nu\beta\p_\tau  \hat c_{T,3} \right](t,\tau,k_h;\om)N_k\\\nonumber
&+&\frac{1}{\sqrt{aa_1a_2}}\sum_{k\in \Z^3}\mathbf 1_{k_3=0}\frac{1}{|k'_h|}\left( \sqrt{\e\nu}\hat c_{B,3}(t,\tau,k_h;\om)-  \e\nu\beta\hat c_{T,3}(t,\tau,k_h;\om)\right)N_k.\end{eqnarray}

We decompose the sum in the right-hand side into two sums, one bearing on $k_h$ such that $|k_h|>A$, denoted by $S_{1,A}$, and the other on $|k_h|\leq A$, denoted by $S_{2,A}$, for some $A>0$ arbitrary. Using the fact that $\beta\sqrt{\e\nu}=\cO(1)$, we have
\begin{eqnarray*}
&&\| S_{1,A}(t,\tau)\|_{L^2}^2\\&\leq &C\e\nu\left\|  \sum_{|k_h|>A}\sum_{k_3\in\Z}\frac{\mathbf 1_{k_3\neq 0}|k_h'|}{ k'_3 |k'|}e^{i\lambda_k\tau}\left[  \p_\tau  \hat c_{B,3}- (-1)^{k_3}\beta\sqrt{\e\nu}\p_\tau  \hat c_{T,3} \right](t,\tau,k_h;\om)N_k \right\|^2_{L^2}\\
&+ &C\e\nu\left\| \sum_{|k_h|>A}\frac{1}{|k_h|}\left( \beta\sqrt{\e\nu}\hat c_{T,3}(t,\tau,k_h;\om)-  \hat c_{B,3}(t,\tau,k_h;\om)\right)N_{k_h,0}  \right\|^2_{L^2}\\
&\leq & C\e\nu\sum_{|k_h|>A} \left( |  \p_\tau  \hat c_{B,3} (t,\tau,k_h;\om)|^2 +  |  \p_\tau  \hat c_{T,3} (t,\tau,k_h;\om)|^2 \right)\\
&&+ C\e\nu\sum_{|k_h|>A} \left( |   \hat c_{B,3} (t,\tau,k_h;\om)|^2 +  |   \hat c_{T,3} (t,\tau,k_h;\om)|^2 \right).
\end{eqnarray*}
Since $c_B,c_T,\p_\tau c_B, \p_\tau c_T$ belong to $L^2(\T^2, L^\infty([0,\infty)\times [0,T]\times E))$, we deduce that the sum $S_{1,A}$ vanishes in $L^{\infty}([0,T]\times[0,\infty), L^2(\T^2\times[0,a]\times E))$ as $A\to \infty$. Thus we work with $A$ sufficiently large, but fixed, so that $S_{1,A}$ is arbitrarily small in $L^2$ norm, and we focus on $S_{2,A}$.

For $k\in\Z^3$ fixed, we have, according to Proposition \ref{prop:ergodic},
\begin{eqnarray*}
\frac{1}{\theta}\int_0^\theta e^{i\lambda_k\tau}\p_\tau  \hat c_{T,3}(t,\tau,k_h;\om)\:d\tau
&=&-i\lambda_k\frac{1}{\theta}\int_0^\theta e^{i\lambda_k\tau} \hat c_{T,3} (t,\tau,k_h;\om)\:d\tau\\
&+&\frac{1}{\theta}\left\{ e^{i\lambda_k \theta }   \hat c_{T,3}(t,\theta,k_h;\om) -  \hat c_{T,3} (t,0,k_h;\om) \right\}\\
&\underset{\theta\to\infty}{\longrightarrow}&-i\lambda_k\cE_{-\lambda_k}\left[  \hat c_{T,3}(t,k_h) \right](\om)
\end{eqnarray*}
in $L^\infty([0,\infty)_t, L^2(E)).$ The calculation for $c_{B,3}$ is identical.

Using Lebesgue's Theorem, we deduce that as $\theta\to\infty$
\be\label{limS2}
 \frac{1}{\theta}\int_0^\theta S_{2,A}(t,\tau)\:d\tau
\to  \frac{1}{\sqrt{aa_1a_2}}\sum_{|k_h|\leq A}\sum_{k_3\in\Z}\frac{|k_h'|}{|k'|^2} \cE_{-\lambda_k}\left[ \sqrt{ \e\nu }\hat c_{B,3}(t,k_h)- (-1)^{k_3} \e\nu\beta\hat c_{T,3}(t,k_h) \right]N_k,
\ee
and the convergence holds in $L^\infty([0,T], L^2(\U\times E)).$ Moreover, we have
\begin{eqnarray*}
 \sum_{k\in\Z^3 }\frac{|k_h'|^2}{|k'|^4} \left|\cE_{-\lambda_k} [\hat c_{T,3}(t,k_h)]\right|^2&\leq & C\sum_{k_3\in \Z^*}\frac{1}{1+|k_3|^2}\left\| \cE_{-\lambda_k} [c_{T,3}(t)]\right\|_{L^2(\T^2)}^2\\
&\leq & C \| c_{T,3}\|_{L^\infty([0,\infty)\times[0,\infty)\times E, L^2(\T^2))}^2.
\end{eqnarray*}
A similar estimate holds for $c_{B,3}$. 
Thus the right-hand side of \eqref{limS2}
converges in $L^2(\U\times E)$ as $A\to\infty$. Eventually, we infer \eqref{lim:Stheta}.
\end{proof}
\vskip2mm
%%%%%%%%%%%%%%%%%%%%%%%%%%%%%%%%%%%%%%%%%%%%%%%%%%%%%%%%%%%%%%%%%%%%%%%%%%%%%%%%%%%%%%%%%%%%%%%%%%%%%%%%%%%%%%%%%%%%%%%%%%%%%%%%%%%%%%%%%%%%%%%%%%%%%
%%%%%%%%%%%%%%%%%%%%%%%%%%%%%%%%%%%%%%%%%%%%%%%%%%%%%%%%%%%%%%%%%%%%%%%%%%%%%%%%%%%%%%%%%%%%%%%%%%%%%%%%%%%%%%%%%%%%%%%%%%%%%%%%%%%%%%%%%%%%%%%%%%%%%

$\bullet$\textbf{ Computation of the source terms.} For the sake of completeness, we now derive an expression of $\bar S[c_{B,3}, c_{T,3}]$ in terms of $w$ and $\sigma$.
We begin with $\cE_{-\lambda_k}[\hat c_{B,3}]$. Remembering the definition of $\hat c_{B,3}(t,k_h)$, we have
$$
\cE_{-\lambda_k}\left[ \hat c_{B,3}(t,k_h)  \right]= -a_1a_2 \sum_\pm \frac{ik_h'\cdot w_k^\pm(t)}{\eta_k^\pm}.
$$
Easy calculations lead to
$$
\cE_{-\lambda_k}\left[ \hat c_{B,3}(t,k_h)  \right]=\sqrt{\frac{a_1a_2 }{2a}}\mathbf 1_{k_h\neq 0}\mean{N_k, w(t)}|k_h'|\sum_\pm\frac{1\pm \lambda_k}{\sqrt{1\mp \lambda_k}}\;\frac{1\pm i}{{2}}
$$
Thus, we define the \textit{Ekman pumping term at the bottom of the fluid} by
\be\label{def:Sekman}
S_B(w):=\sum_{k\in\Z^3}\mean{N_k,w} A_k N_k,
\ee
where 
$$
A_k:=\frac{|k_h'|^2}{2\sqrt{2}a|k'|^2}\sum_\pm \frac{1\pm \lambda_k}{\sqrt{1\mp \lambda_k}}\;(1\pm i).
$$

There remains to compute the coefficients $\cE_{-\lambda_k}(\hat c_{T,3}(t,k_h))$; since the boundary condition $c_{T,3}$ depends on the small parameter $\delta$, the corresponding Ekman pumping term will depend on $\delta$ as well. The limit as $\delta$ vanishes of the corresponding source term will be computed in the next paragraph. By definition of $\cE_\lambda$, we have, for all $k_h\in\Z^2$, for all $\lambda\in\R$,
\begin{eqnarray*}
\cEl\left[\hat c_{T,3}(t,k_h)\right](\om)&=&\frac{1}{2}\sum_\pm\lim_{\theta\to\infty}\frac{1}{\theta}\int_0^\theta\int_0^\infty \hat \sigma^\pm(t,\tau-s,k_h;\om)e^{-\delta s -i\lambda \tau\pm is}\: ds\:d\tau\\
&=&\frac{1}{2}\sum_\pm\lim_{\theta\to\infty}\int_0^\infty\left( \frac 1 \theta \int_0^\theta\hat \sigma^\pm(t,\tau,k_h;\theta_{-s}\om) e^{-i\lambda\tau}\:d\tau \right) e^{-\delta s\pm is}\:ds,
\end{eqnarray*}
where 
$$
\hat \sigma^\pm(k_h)=(ik_h'\pm (k_h')^\bot)\cdot \hat \sigma(k_h) .
$$
Thanks to  Lebesgue's dominated convergence Theorem and Proposition \ref{prop:ergodic}, we infer, for all $\delta>0$,
\begin{eqnarray*}
\cEl\left[\hat c_{T,3}(t,k_h)\right](\om)&=&\frac{1}{2}\sum_\pm\int_0^\infty \cEl\left[ \hat \sigma^\pm(t,k_h) \right](\theta_{-s}\om)e^{-\delta s\pm is}\:ds\\
&=&\frac{1}{2}\sum_\pm\int_0^\infty \cEl\left[ \hat \sigma^\pm(t,k_h) \right](\om)e^{-\delta s\pm is-i\lambda s}\:ds\\
&=&\frac{1}{2}\sum_\pm\cEl\left[ \hat \sigma^\pm(t,k_h) \right](\om) \frac{-1}{-\delta + i(- \lambda \pm 1)} .
\end{eqnarray*}
Thus we define the \textit{Ekman pumping term at the top of the fluid} by
\be\label{def:SekmanT-delta}
S_T^\delta(\sigma)=\frac{1}{2} \frac{1}{\sqrt{aa_1a_2}}\sum_{k\in\Z^3}\sum_\pm\frac{(-1)^{k_3}|k_h'|}{|k'|^2} \frac{\cE_{-\lambda_k}\left[ \hat \sigma^\pm(k_h) \right]}{-\delta + i( \lambda_k\pm 1)}N_k.
\ee

Going back to Lemma \ref{lim:Stheta}, we deduce that
$$
\bar S[c_{B,3}, c_{T,3}]= \sqrt{\e\nu} S_B(w) + \e\nu\beta S^\delta_T(\sigma).
$$

\subsection{The envelope equation}
\label{ssec:envelope}
Now that the term $\vint$ is defined, there remains to construct $w$ and $\delta \uint$ such that  $\uint$ is an approximate solution of equation \eqref{eq:depart}. We recall that $\delta \uint, \vint$ are strongly oscillating  terms, small in $H^1$ norm. Consequently, setting $\buint(t,\tau) = \cL(\tau )w(t)$, we have
\begin{eqnarray*}
&&\p_t \uint + \uint\cdot \nabla\uint +  \frac{1}{\e} e_3\wedge \uint -\Delta_h\uint-\nu \p_z^2\uint\\
&\approx& \cL \left( \frac{t}{\e} \right)\p_t w + \buint\cdot \nabla\buint - \Delta_h\buint+ \frac{1}{\e}\left[ \p_\tau \delta \uint + L  \delta \uint\right]\left(t,\frac{t}{\e}  \right)\\&&+ \frac{1}{\e}\left[ \p_\tau \vint +e_3\wedge \vint\right]\left(t,\frac{t}{\e}  \right) + \nabla p^\text{int}\\
&=& \cL \left( \frac{t}{\e} \right)\left[ \p_t w + Q\left(\frac{t}{\e},w,w\right) - \Delta_{h} w  \right]+ \nabla p^\text{int}\\&+& \frac{1}{\e}\left[\cL \left( \tau \right)\p_\tau \left(\mathcal L\left( -\tau \right)  \delta \uint(t,\tau)\right)\right]_{|\tau=\frac{t}{\e}} + \Sigma\left( t,\frac{t}{\e} \right),
\end{eqnarray*}
where
$$
Q(\tau,w,w) = \mathcal L(-\tau) \Proj\left[ \nabla( \mathcal L(\tau) w  \otimes \mathcal L(\tau) w  ) \right].
$$
and $\Sigma$ is defined by
\be
\Sigma(t,\tau):= \frac{1}{\e } \left[\frac{\p }{\p \tau} \vint(t,\tau) +e_3\wedge\vint(t,\tau)\right].\label{def:Sigma}
\ee

Thus it is natural to choose $w$ and $\delta \uint$ such that for all $t,\tau$,
\be
\p_t w + Q(\tau,w,w) - \Delta_{h} w + \mathcal L\left( -\tau \right) \Proj \Sigma\left( t, \tau\right)  +\frac{1}{\e}\p_\tau \left[\mathcal L\left( -\tau \right)  \delta \uint(t,\tau) \right] =0.\label{eq:def_v}
\ee
The quantity $\cL(-\tau) \Proj \Sigma(t,\tau)$ has already been computed in Lemma \ref{lem:source} (see \eqref{expr:Sigma}). Since $w$ does not depend on $\tau$, the first idea is to average the above equation on a time interval $[0,\theta]$, and to pass to the limit as $\theta\to\infty$ in order to derive an equation for $w$.
Assuming that the term $\delta\uint$ is bounded uniformly in $\tau$, we have
$$
\lim_{\theta\to\infty }\int_0^\theta\frac{1}{\e}\p_\tau \left[\mathcal L\left( -\tau \right)  \delta \uint(t,\tau) \right]\: d\tau=0. 
$$
On the other hand, we have already proved in Lemma \ref{lem:source} that 
\begin{eqnarray*}
 \lim_{\theta\to\infty}\frac{1}{\theta}\int_0^\theta\mathcal L\left( -\tau \right) \Proj \Sigma\left( t, \tau\right) \:d\tau &=& \frac{1}{\e}\bar S[c_{B,3}, c_{T,3}]\\&=&\sqrt{\frac{\nu}{\e}} S_B[w] + \nu\beta S_T^\delta(\sigma)\quad\text{in }L^\infty_\text{loc}([0,\infty)_t,L^2(\U\times E)).
\end{eqnarray*}

Moreover, we have
$$
Q(\tau,w,w)=\sum_{k,l,m\in\Z^3}e^{i(-\lambda_l - \lambda_m+ \lambda_k)\tau}\mean{N_l,w}\mean{N_m,w}\mean{N_k, (N_l\cdot \nabla) N_m} N_k,
$$
and it is proved in \cite{CheminDGG} that if $w$ is sufficiently smooth,
$$
\frac{1}{\theta}\int_0^\theta Q(\tau,w,w)\rightharpoonup\bar Q(w,w)
$$
in the distributional sense, where $\bar Q$ is defined by \eqref{def:Q}. Hence, for all $\delta>0$, we define $w^\delta $ as the unique solution in $ L^\infty(E, \mathcal C([0,\infty),\mathcal H \cap H^{0,1} ))\cap  L^\infty(E, L^2_{\text{loc}}([0,\infty), H^{1,0}))$ of the equation
\be\label{eq:envelope1}\begin{aligned}
    &\p_t w^\delta + \bar Q(w^\delta,w^\delta) -\Delta_h w^\delta + \sqrt{\frac{\nu}{\e}}S_B(w^\delta) +\nu\beta S_T^\delta(\sigma)=0,\\
&w^\delta_{|t=0}=u_0\in \cH \cap H^{0,1}. \end{aligned}\ee
We refer to Proposition 6.5 p. 145 in \cite{CheminDGG} and to the comments following Proposition \ref{prop:eqlim} in the Introduction of this paper for existence and uniqueness results about equation \eqref{eq:envelope1}. Notice that if $\sigma\in L^\infty([0,T)\times [0,\infty)_\tau\times E, L^2(\T^2))$ only has a finite number of horizontal modes,  then $S_T^\delta(\sigma)\in L^\infty([0,T]\times E, H^{0,1}).$
Moreover, the fact that $\Re(A_k)\geq 0$ in the definition of $S_B$ implies that   the Ekman pumping due to the Dirichlet condition at $z=0$ induces a damping term in the envelope equation.

\vskip1mm

$\bullet$ The idea is then to pass to the limit in $S_T^\delta(\sigma)$ as $\delta\to 0$ when $\sigma$ satisfies {\bf (H1)-(H2)}, using \eqref{nonres_cEL}. Let us admit for the time being that the last property of Proposition \ref{prop:ergodic} holds, i.e.
\be
\exists \eta>0, \ \forall \lambda\in[-1-\eta, -1+ \eta]\cup[1-\eta, 1+ \eta],\ \cEl(\sigma)=0.\label{nonres_barsigma1}
\ee
Property \eqref{nonres_barsigma1} entails that the sum in the right-hand side of \eqref{def:SekmanT-delta} bears only on the triplets $(k_1,k_2,k_3)$ such that 
$$
|\lambda_k-1|\geq \eta,\ |\lambda_k+1|\geq \eta,
$$
which entails
$$
|k_3|\leq C(\eta) |k_h|.
$$

Consequently, since $\sigma$ only has a finite number of horizontal modes,  we deduce that the sum in the definition of $ S_{T}^{\delta}(\sigma)$ is finite. Hence $ S_{T}^{\delta}(\sigma)$ converges as $\delta\to 0$ in $L^{\infty}([0,\infty)\times E; L^2(\T^2\times[0,a)))$ towards
\be
S_T(\sigma):=-\sqrt{\frac{a_1a_2}{a}}\sum_{\substack{k\in\Z^3,\\ k_h\neq 0}}\frac{(-1)^{k_3}}{|k'_h|}\left( \lambda_k k_h' +i(k_h')^\bot \right)\cdot \cE_{-\lambda_k}\left[ \hat \sigma(k_h) \right]N_k.\label{def:barS_T}
\ee

The same property holds when $\sigma $ has an infinite number of horizontal Fourier modes, provided $\sigma$ is sufficiently smooth with respect to the horizontal variable $x_{h}$ and satisfies \textbf{(H1)-(H2)}.

Thus for all $T_{0}>0$, the source term $S_{T}^\delta(\sigma)$ remains bounded in $L^\infty((0,T_0)\times E,H^{0,1})$ as $\delta\to 0$; whence $w^\delta$ is bounded, uniformly in $\delta$, in $L^\infty(E,\mathcal C([0,T_{0}],\mathcal H \cap H^{0,1} ) \cap L^2([0,T_{0}], H^{1,0}))$. Moreover, let $w$ be the unique solution in $L^\infty(E,\mathcal C([0,\infty),\mathcal H \cap H^{0,1} ))\cap L^\infty(E,L^2_{\text{loc}}([0,\infty), H^{1,0}))$ of

\be\begin{aligned}
    &\p_t w + \bar Q(w,w) -\Delta_h w+  \sqrt\frac{\nu}{\e}S_B(w) +{\nu}{\beta}S_T(\sigma)=0,\label{enveloppe_lim}\\
&w_{|t=0}=u_0.
   \end{aligned}\ee
A standard energy estimate leads to the following error bound, for all $T>0$,
\begin{multline}
 ||w-w_{\delta}||_{L^\infty([0,T]\times E, L^2)} + ||\nabla_h(w-w_{\delta})||_{L^\infty(E,L^2([0,T]\times \U))}\\\leq C{\nu}{\beta} ||S_T(\sigma)-S_T^\delta(\sigma) ||_{L^\infty(E,L^2([0,T]\times \U))}.
\label{in:w_wdelta}\end{multline}

Thus, when constructing the approximate solution in the next section, we will use the function $w^\delta$, but we will keep in mind that $w^\delta$ converges towards $w$ as $\delta$ vanishes.

\vskip1mm

%%%%%%%%%%%%%%%%%%%%%%%%%%%%%%%%%%%%%%%%%%%%%%%%%%%%%%%%%%%%%%%%%%%%%%%%%%%%%%%%%%%%%%%%%%%%%%%%%%%%%%%%%%%%

$\bullet$ \label{nonres_barsigma}Let us now turn to the proof of property \eqref{nonres_barsigma1} (which is the same as \eqref{nonres_cEL}). Using \textbf{(H2)}, we choose $\eta_0>0$ such that 
$$
[-1-\eta_0, -1 + \eta_0]\subset V_-,\quad[1-\eta_0, 1 + \eta_0]\subset V_+.
$$
For $\lambda\in\R$ arbitrary, and for $\theta>0$, we have
\begin{eqnarray*}
&&\left\|\frac{1}{\theta}\int_0^\theta \sigma(\tau,\om) e^{-i\lambda\tau}\:d\tau\right\|_{L^\infty([0,T]\times\T^2\times E)}\\&=& \left\|\frac{1}{\theta}\int_0^\theta (\sigma-\sigma_\alpha + \sigma_\alpha)(\tau,\om) e^{-i\lambda\tau}\:d\tau\right\|_{L^\infty([0,T]\times\T^2\times E)}\\
&\leq & ||\sigma-\sigma_\alpha||_{L^\infty((0,\theta)\times[0,T]\times  E\times\T^2)}\\
&& + \frac{1}{\theta}\left\|\int_0^\theta\int_\R e^{-\alpha |\mu|+i\mu\tau-i\lambda\tau}\cFa \sigma(\mu)\:d\mu\:d\tau \right\|_{L^\infty([0,T]\times\T^2\times E)}\\
&\leq & ||\sigma-\sigma_\alpha||_{L^\infty((0,\theta)\times[0,T]\times  E\times \T^2)}\\
&& + \left\|\int_\R e^{-\alpha |\mu|}\frac{e^{i(\mu-\lambda)\theta}-1}{i(\mu-\lambda)\theta}\cFa \sigma(\mu)\:d\mu\right\|_{L^\infty(\T^2)}\\
&\leq &  ||\sigma-\sigma_\alpha||_{L^\infty((0,\theta)\times[0,T]\times  E\times \T^2)}\\
&& + \left(\sup_{\mu\in V_-\cup V_+}\left\| \cFa \sigma(\mu)\right\|_{L^\infty([0,T]\times E\times\T^2)} \right)\left( \left| V_+ \right| + \left| V_- \right| \right)\\
&& + \int_{\R\setminus ( V_-\cup V_+)}e^{-\alpha |\mu|}\left|\frac{e^{i(\mu-\lambda)\theta}-1}{i(\mu-\lambda)\theta}\right| \; \left\|\cFa \sigma(\mu)\right\|_{L^\infty([0,T]\times E\times\T^2)}\:d\mu\:d\tau .
\end{eqnarray*}
Let us now evaluate the last integral when $\lambda$ is close to $\pm 1$, say for instance
$$
|\lambda-1|\leq \frac{\eta_0}{2}.
$$
Then if $\mu\in \R\setminus ( V_-\cup V_+)$, we have $|\mu-1|\geq \eta_0$, and thus
$$
|\mu-\lambda|\geq \frac{\eta_0}{2}.
$$
In particular,
$$
\left|\frac{e^{i(\mu-\lambda)\theta}-1}{i(\mu-\lambda)\theta}\right|\leq \frac{2}{|\mu-\lambda|\theta}\leq \frac{C}{\theta}.
$$
Hence, for all $\theta>0$, for $\lambda$ such that $|\lambda\pm 1|\leq \eta_0/2$, the following inequality holds for all $\alpha>0$
\begin{eqnarray*}
&&\left\|\frac{1}{\theta}\int_0^\theta \sigma(\tau,\om) e^{-i\lambda\tau}\:d\tau\right\|_{L^\infty([0,T]\times \T^2, L^2(E))}\\&\leq &  ||\sigma-\sigma_\alpha||_{L^\infty([0,\theta]\times[0,T] \times E\times \T^2)} + \frac{C}{\theta}\sup_{\alpha>0} \left\| \cFa \sigma(\mu) \right\|_{L^\infty([0,T]\times E\times \T^2, L^1(\R_\lambda))}\\&+& \sup_{\mu\in V_-\cup V_+}\left\| \cFa \sigma(\mu)\right\|_{L^\infty([0,T]\times E\times\T^2)} \left( \left| V_+ \right| + \left| V_- \right| \right).
\end{eqnarray*}
In the above inequality, we first take $\theta$ large enough, so that the left-hand side is close to $\|\bar\sigma(\lambda)\|$, and $C\sup_\alpha \| \cFa\sigma\|/\theta$ is small. Then we let $\alpha$ go to zero, with $\theta$ fixed; we deduce that
$$
\cEl[\sigma]=0\quad\forall \lambda\ \text{such that } d(\lambda,\pm 1)\leq \frac{\eta_0}{2}.
$$

\subsection{Definition of $\delta \uint$}

Once $w$ (or $w^\delta$) and $\vint$ are defined, there only remains to obtain an equation on $\delta \uint$. As stated before, $\delta \uint$ is chosen so that equality \eqref{eq:def_v} holds for all $\tau\geq 0$. According to the above computations, this amounts to taking $\delta \uint$ such that
\begin{eqnarray}\nonumber
\frac{\p}{\p\tau}\left[ \mathcal L(-\tau) \delta\uint(\tau) \right]&=&\e\bar Q(w,w) - \e Q(\tau,w,w) +\bar S[c_{B,3},c_{T,3}] -\e\mathcal L(-\tau) \Proj\Sigma(t,\tau),\\\nonumber
 \mathcal L(-\tau) \delta\uint(\tau) &=&\e\int_0^\tau \left[ \bar Q(w,w)- Q(s,w,w)\right] \:ds \nonumber\\&&\nonumber+ \int_0^\tau \left[ \bar S[c_{B,3},c_{T,3}] - \e\mathcal L(-s) \Proj\Sigma(t,s)\right]\:ds\\\label{def:deltauint}
\delta\uint(\tau)&=&\e \mathcal L(\tau)\int_0^\tau \left[ \bar Q(w,w)- Q(s,w,w)\right] \:ds \\&&+  \mathcal L(\tau) \int_0^\tau \left[  \bar S[c_{B,3},c_{T,3}]  - \e\mathcal L(-s) \Proj\Sigma(t,s)\right]\:ds.\nonumber
\end{eqnarray}
Equivalently, $\delta \uint$ satifies the equation
$$
\p_\tau \delta \uint + L\delta \uint = \e\mathcal L(\tau) \left[ \bar Q(w,w)- \e Q(\tau,w,w) \right]+\mathcal L(\tau)  \bar S[c_{B,3},c_{T,3}]  - \e \Proj\Sigma(t,\tau). $$
We now derive a bound on the coefficients of $\delta\uint$:

%%%%%%%%%%%%%%%%%%%%%%%%%%%%%%%%%%%%%%%%%%%%%%%%%%%%%%%%%%%%%%%%%%%%%%%%%%%%%%%%%%%%%%%%%%%%%%ù

\begin{lemma}Let $T>0$, $N>0$, and let  $w\in L^\infty(E,\mathcal C([0,T],\mathcal H ))$ such that 
$$
\mean{N_k,w(t)}=0\quad\forall k,|k|>N,\ \forall t\in[0,T].
$$

Let $\Sigma$ be given by \eqref{def:Sigma}, and $\delta \uint$ by \eqref{def:deltauint}. Then for all $k\in\Z^3$, for all $\eta>0$,  there exists a constant $C_{\eta,k}$ such that for all $\tau\geq 0$,  for all $\e,\nu,\beta>0$ such that $\nu=\mathcal O(\e)$ and $\sqrt{\e\nu}\beta=\cO(1)$
$$
\left\| \mean{N_k,\delta\uint(t,\tau)} \right\|_{L^\infty([0,T], L^2(E))}\leq (\e+ \sqrt{\e\nu})(C_{\eta,k}+ \eta\tau).$$
\label{lem:bound_deltauint}
\end{lemma}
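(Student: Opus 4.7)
The plan is to split $\langle N_k, \delta \uint(t,\tau)\rangle$ into the two pieces of \eqref{def:deltauint} and bound each. Since $L$ is skew-adjoint with $LN_k=i\lambda_k N_k$, one has $\langle N_k,\cL(\tau)f\rangle = e^{-i\lambda_k\tau}\langle N_k,f\rangle$, so the prefactor $\cL(\tau)$ contributes only a unimodular factor. It therefore suffices to estimate separately
$$
\e\,I_Q(\tau) := \e\int_0^\tau \langle N_k, \bar Q(w,w)-Q(s,w,w)\rangle\,ds
$$
and
$$
I_\Sigma(\tau) := \int_0^\tau \langle N_k, \bar S[c_{B,3},c_{T,3}] - \e\,\cL(-s)\Proj\Sigma(t,s)\rangle\,ds.
$$

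For $I_Q$, expand $Q(s,w,w)-\bar Q(w,w)$ in the basis $(N_{m})$ as in the formula recalled just before \eqref{eq:envelope1}. Because $\mean{N_l,w}=0$ for $|l|>N$, the projection onto $N_k$ is a \emph{finite} sum of oscillating exponentials $e^{is(\lambda_k-\lambda_l-\lambda_m)}\mean{N_l,w}\mean{N_m,w}\alpha_{l,m,k}$ where each frequency is non-zero (the resonant contributions have been subtracted by $\bar Q$). Each such integral is bounded by $2/|\lambda_k-\lambda_l-\lambda_m|$ uniformly in $\tau$, yielding $|I_Q(\tau)|\leq C_{N,k}\|w\|_{L^\infty([0,T]\times E,\cH)}^2$.

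For $I_\Sigma$, I use the explicit expression \eqref{expr:Sigma} and separate the $c_{B,3}$ and $c_{T,3}$ contributions. The $c_{B,3}$ part is a trigonometric polynomial in $s$ with finitely many frequencies (inherited from the truncation of $w$) and with prefactor $\sqrt{\e\nu}$; its non-resonant frequencies integrate to bounded quantities while the resonant one cancels against the corresponding component of $\sqrt{\e\nu}\,S_B(w)$, so this contribution is controlled by $\sqrt{\e\nu}\,C_{N,k}$. The $c_{T,3}$ part is stationary and random, of prefactor $\e\nu\beta=\sqrt{\e\nu}(\sqrt{\e\nu}\beta)=\cO(\sqrt{\e\nu})$. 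By Proposition \ref{prop:ergodic} applied to $\tau\mapsto e^{-i\lambda_k\tau}\hat c_{T,3}(t,\tau,k_h)$, the Ces\`aro means converge in $L^2(E)$ to $\cE_{-\lambda_k}[\hat c_{T,3}]$; given $\eta>0$, fix $\theta_0=\theta_0(\eta,k)$ such that this discrepancy is $\leq \eta$ for all $\theta\geq \theta_0$, uniformly in $t\in[0,T]$. Writing
$$
\int_0^\tau [f(s)-\bar f]\,ds = \tau\left(\frac 1\tau\int_0^\tau f(s)\,ds - \bar f\right),
$$
one obtains $\leq 2\theta_0\|f\|_{L^\infty}$ for $\tau\leq \theta_0$ and $\leq \eta\tau$ for $\tau>\theta_0$. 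Hence $\|I_\Sigma(\tau)\|_{L^\infty([0,T],L^2(E))}\leq \sqrt{\e\nu}\,(C_{\eta,k}+\eta\tau)$, and combining this with the bound on $\e I_Q$ yields the result.

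The main obstacle is the third step: Proposition \ref{prop:ergodic} delivers \emph{qualitative} ergodic convergence with no uniform rate, so the linear-in-$\tau$ term $\eta\tau$ is intrinsic and cannot be avoided without further mixing hypotheses on $(\theta_\tau)$. This is exactly why the bound is stated with an arbitrary $\eta>0$ and a constant $C_{\eta,k}$ allowed to blow up as $\eta\to 0$; downstream, the convergence argument of Section \ref{sec:CVproof} will have to choose $\eta$ suitably small and $\tau=t/\e$ only up to times of order $\eta^{-1}$, which matches the construction of the approximate solution.
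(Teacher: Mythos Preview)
Your proof is correct and follows essentially the same route as the paper. For $I_Q$ the arguments are identical. For $I_\Sigma$, the paper invokes Lemma~\ref{lem:source} directly (which already packages the Ces\`aro convergence of $\cL(-s)\Proj\Sigma$ toward $\frac{1}{\e}\bar S$ for \emph{both} boundary contributions at once), then splits at a threshold $\tau_{\eta,k}$ exactly as you do with $\theta_0$. Your explicit separation of $c_{B,3}$ and $c_{T,3}$ is a harmless refinement; it amounts to reproving the relevant piece of Lemma~\ref{lem:source} inline, and incidentally shows that the $\eta\tau$ term arises only from the random stationary part. One small slip: for $k_3\neq 0$, the integrand in \eqref{expr:Sigma} involves $\partial_\tau\hat c_{T,3}$ rather than $\hat c_{T,3}$, so one integration by parts (as in the proof of Lemma~\ref{lem:source}) is needed before Proposition~\ref{prop:ergodic} applies in the form you state.
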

%%%%%%%%%%%%%%%%%%%%%%%%%%%%%%%%%%%%%%%%%%%%%%%%%%%%%%%%%%%%%%%%%%%%%%%%%%%%%%%%%%%%%%%%%%%%%%%%%%

\begin{remark}
The above Lemma is stated with a function $w$ having only a finite number of Fourier modes, which is not the case for the solution of \eqref{eq:envelope1} in general. However, when constructing the approximate solution in the next section, we will consider regularizations of the solution $w$ of the envelope equation \eqref{eq:lim}, so that this issue is in fact unimportant.

\end{remark}

\begin{proof}
We begin with the derivation of a bound for the term
\begin{eqnarray*}
 &&\int_0^\tau \left[ \bar Q(w,w)- Q(s,w,w)\right] \:ds\\
&=&-\sum_{\substack{k,l,m\\ \lambda_l+\lambda_m\neq \lambda_k}}\alpha_{l,m,k}\mean{N_m,w}\mean{N_l,w}\left( \int_0^\tau  e^{i(\lambda_k-\lambda_l-\lambda_m)s} ds \right)N_k.
\end{eqnarray*}
Notice that the set $(l,m)\in\Z^3\times\Z^3$ such that $\mean{N_m,w}\mean{N_l,w}\neq 0$ is finite, and included in $B_N\times B_N$.
Moreover, if $(l,m)\in B_N\times B_N$ and $\lambda_l+\lambda_m\neq \lambda_k$, then there exists a constant $\alpha_{N,k}>0$ such that
$$
|\lambda_l+\lambda_m-\lambda_k|\geq \alpha_{N,k}.
$$
As a consequence, we have
\begin{eqnarray*}
 &&\left| \mean{N_k,\int_0^\tau \left[ \bar Q(w(t),w(t))- Q(s,w(t),w(t))\right] \:ds} \right|\leq  \frac{1}{\alpha_{N,k}} \|w\|_{L^\infty((0,T)\times \T^2\times[0,a]\times E)}^2.
\end{eqnarray*}

In a similar way, we now derive a bound on the second term in \eqref{def:deltauint}. According to Lemma~\ref{lem:source}, we have, for all $k\in\Z^3$,
$$
\frac{1}{\tau}\int_0^\tau \mean{N_k,\mathcal L(-s) \Proj\Sigma(t,s)}\:ds \to \frac{1}{\e}\mean{N_k,\bar S[c_{B,3},c_{T,3}] }
$$
as $\tau\to\infty$, in $L^\infty([0,T],L^2( E))$. Let $\tau_{\eta,k}>0$ such that if $\tau\geq \tau_{\eta,k}$, then
$$
\left\| \frac{1}{\tau}\int_0^\tau \mean{N_k,\mathcal L(-s) \Proj\Sigma(t,s)}\:ds-\frac{1}{\e}\mean{N_k,\bar S[c_{B,3},c_{T,3}] } \right\|_{L^\infty([0,T],L^2( E))}\leq \eta.
$$
Now, for $\tau<\tau_{\eta,k}$, we have
\begin{eqnarray*}
&&\left\| \mean{N_k,\int_0^\tau \left[  \frac{1}{\e}\bar S[c_{B,3},c_{T,3}]  - \mathcal L(-s) \Proj\Sigma(t,s)\right]ds} \right\|_{L^\infty([0,T],L^2( E))}\\&\leq &\tau_{\eta,k}\left\| \mean{N_k,\frac{1}{\e}\bar S[c_{B,3},c_{T,3}] } \right\|_{L^\infty([0,T],L^2( E))}\\&+& \sqrt{\tau_{\eta,k}}\int_0^{\tau_{\eta,k}}\left\| \mean{N_k,\Sigma(\cdot,s)} \right\|_{L^\infty([0,T],L^2( E))}ds\\&\leq &C_{\eta,k}.
\end{eqnarray*}
Gathering all the estimates, we infer the inequality announced in Lemma \ref{lem:bound_deltauint}.
\end{proof}

%%%%%%%%%%%%%%%%%%%%%%%%%%%%%%%%%%%%%%%%%%%%%%%%%%%%%%%%%%%%%%%%%%%%%%%%%%%%%%%%%%%%%%%%%%%%%%%%%%%%%%%%%%%%%%%%%%%%%%%%%%%%%%%%

\section{Proof of convergence}

\label{sec:CVproof}
This section is devoted to the proof of the convergence result in Theorem \ref{thm:conv}. In the previous sections, we have already defined boundary layer terms and interior terms at the main order. Unfortunately, the sum of those first order terms is not a sufficiently good approximation of $u^{\e,\nu}$. Hence the first step of the proof is to define additional correctors, and thus to build an adequate approximate solution. We then derive some technical estimates on the various terms of the approximate solution, and eventually we prove the convergence thanks to an energy estimate.

\subsection{Building an approximate solution}

\label{sec:approx}

The approximate solution is obtained as the sum of some interior terms and some boundary layer terms; although we have to construct several correctors in order to obtain a good approximation of the function $u^{\e,\nu}$, we emphasize that all terms vanish in $L^2$ norm, except the solution $w^\delta$ of the approximated envelope equation \eqref{eq:envelope1}. In this paragraph, we build the correctors step by step, using the general constructions of the previous sections. At each step, we will give some bounds on the corresponding term; these bounds will be proved in the next paragraph.

\vskip2mm

\noindent{\it $\bullet$ First step. The interior term at the main order.}

We have seen that the interior term at main order is given as the solution of the envelope equation \eqref{eq:envelope1}, and that when the parameter $\delta$ vanishes, the envelope equation becomes \eqref{enveloppe_lim}. However, we are not able to construct the boundary layer terms at the top for $\delta=0$, and thus we must keep the approximated solution of the envelope equation, namely $w^\delta$. Moreover, when constructing the corrector terms $\ubl, \delta\uint, \vint$, we will need some high regularity estimates in space and time on $w^\delta$, which are in general not available for $w^\delta $ or $w$. Thus we introduce a \textbf{regularization} of $w^\delta$ with respect to the time variable, and we \textbf{truncate the large frequencies} in $w^\delta$. 

Let $\chi\in \mathcal D(\R)$ be a cut-off function such that 
\begin{gather*}
 \chi(t)=0\quad \forall t\in[0,\infty),\quad
\chi(t)=0\quad \forall t\in(-\infty,-1],\\
\chi(t)\geq 0\quad\forall t\in\R,\ \int_{\R}\chi=1.
\end{gather*}

For $n\in\N^*$, set $\chi_n:=n^{-1}\chi(\cdot/n),$ and define, for $n,N>0$, 
$$
w_{n,N}^\delta:=\Proj_N\left[w^\delta\ast_t\chi_n\right]=(\Proj_N w^\delta)\ast_t \chi_n,
$$
where $\Proj_N$ stands for the projection onto the vector space generated by $N_k$ for $|k|\leq N.$
The convolution in time is well-defined thanks to the assumptions on the support of $\chi$. We have clearly
$$ \begin{aligned}  \lim_{n,N\to\infty}\sup_{\delta>0}\| w^\delta-\wnd\|_{L^\infty([0,T]\times E, L^2)}=0,\\
\lim_{n,N\to\infty}\sup_{\delta>0}\| w^\delta-\wnd\|_{L^\infty(E, L^2([0,T], H^{1,0}))}=0.
  \end{aligned}
$$
Moreover, the following result holds, and will be proved in the next paragraph:
\begin{lemma}
The function $\wnd$ is an approximate solution of \eqref{eq:envelope1}, with an error term $r_{n,N}^\delta$ which vanishes in $L^2([0,T], H^{-1,0})$ as $n,N\to \infty$, uniformly in $\delta$.

\label{lem:est:wnd}

\end{lemma}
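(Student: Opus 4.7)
The plan is to apply the time convolution $\ast_t\chi_n$ and the spectral projection $\Proj_N$ to equation \eqref{eq:envelope1}, exploiting the fact that $\p_t$, $\Delta_h$, $S_B$ and $\Proj_N$ are all diagonal in the Hilbertian basis $(N_k)$ (recall that $\Delta_h N_k = -|k_h'|^2 N_k$ and $S_B N_k = A_k N_k$), and hence commute with both $\Proj_N$ and with the time convolution. This yields
$$
\p_t \wnd + \Proj_N\!\left[\bar Q(w^\delta,w^\delta)\ast_t\chi_n\right] - \Delta_h \wnd + \sqrt{\tfrac{\nu}{\e}}\, S_B(\wnd) + \nu\beta\,\Proj_N\!\left[S_T^\delta(\sigma)\ast_t\chi_n\right] = 0,
$$
so the error $r_{n,N}^\delta$ obtained by plugging $\wnd$ into \eqref{eq:envelope1} reduces to two contributions:
$$
r_{n,N}^\delta = \underbrace{\bar Q(\wnd,\wnd) - \Proj_N\!\left[\bar Q(w^\delta,w^\delta)\ast_t\chi_n\right]}_{=:A_{n,N}^\delta} + \nu\beta\underbrace{\left(S_T^\delta(\sigma) - \Proj_N\!\left[S_T^\delta(\sigma)\ast_t\chi_n\right]\right)}_{=:B_{n,N}^\delta}.
$$

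The source contribution $B_{n,N}^\delta$ is handled first. The discussion in paragraph \ref{ssec:envelope} (based on \eqref{nonres_barsigma1}) gives that $S_T^\delta(\sigma)$ is bounded, uniformly in $\delta>0$, in $L^\infty([0,T]\times E, H^{0,1})$. Classical mollifier/approximation theory in $L^2([0,T], L^2)$ then shows that $B_{n,N}^\delta$ vanishes in $L^2([0,T], L^2)\subset L^2([0,T], H^{-1,0})$ as $n,N\to\infty$, uniformly in $\delta$.

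For the quadratic contribution $A_{n,N}^\delta$, split it using bilinearity of $\bar Q$ as
$$
A_{n,N}^\delta = \bar Q(\wnd-w^\delta,\wnd) + \bar Q(w^\delta,\wnd-w^\delta) + \bar Q(w^\delta,w^\delta) - \Proj_N\!\left[\bar Q(w^\delta,w^\delta)\ast_t\chi_n\right].
$$
By \eqref{in:w_wdelta} and the uniform bound on $S_T^\delta(\sigma)$, the family $(w^\delta)_{\delta>0}$ is bounded uniformly in $\delta$ in $L^\infty(E,\mathcal C([0,T],\cH\cap H^{0,1}))\cap L^\infty(E,L^2([0,T], H^{1,0}))$. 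The structural analysis of $\bar Q$ recalled in the Introduction (and carried out in detail in \cite{CheminDGG}, Chapter~6) shows that $\bar Q$ acts as a bilinear form of order one that, combined with the anisotropic energy space just mentioned, maps continuously into $L^2([0,T], H^{-1,0})$. Using the strong convergence $\wnd\to w^\delta$ in this same space, the first two summands go to zero in $L^2([0,T], H^{-1,0})$, uniformly in $\delta$. The last summand is once more a spectral-cutoff/mollifier difference applied to the uniformly bounded family $\bar Q(w^\delta,w^\delta)$, and thus tends to zero in $L^2([0,T], H^{-1,0})$, again uniformly in $\delta$.

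The main obstacle is precisely to ensure that all the convergences above hold \emph{uniformly in $\delta$}. This uniformity propagates from the fundamental uniform bound on $S_T^\delta(\sigma)$, which itself rests on the non-resonance hypothesis \textbf{(H2)} and its consequence \eqref{nonres_barsigma1}: indeed, without (H2) the factor $(-\delta+i(\lambda_k\pm 1))^{-1}$ appearing in \eqref{def:SekmanT-delta} would blow up as $\delta\to 0$ along resonant modes, destroying every uniform estimate downstream. Once the uniform bounds on $w^\delta$ are secured, the proof becomes a routine combination of mollifier estimates and the continuity of $\bar Q$.
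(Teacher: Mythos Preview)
Your overall strategy coincides with the paper's: apply $\Proj_N$ and $\ast_t\chi_n$ to \eqref{eq:envelope1}, identify the error as a quadratic piece plus a source piece, and use the bilinear continuity of $\bar Q$ in $L^2([0,T],H^{-1,0})$. The derivation of the equation for $\wnd$ and the first two summands of your $A_{n,N}^\delta$ are handled essentially as in the paper.

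There is, however, a genuine gap in the uniformity argument for your third summand
\[
\bar Q(w^\delta,w^\delta)-\Proj_N\!\left[\bar Q(w^\delta,w^\delta)\ast_t\chi_n\right],
\]
and similarly for $B_{n,N}^\delta$. You assert that these vanish uniformly in $\delta$ because the underlying families are uniformly bounded. But uniform boundedness of a family in $L^2([0,T],H^{-1,0})$ does \emph{not} imply that mollifier or spectral-cutoff errors go to zero uniformly over the family: that requires equicontinuity in time and tightness in the Fourier variable, which you have not established for $\bar Q(w^\delta,w^\delta)$.

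The paper avoids this by a different telescoping. For the source term it writes $\Proj_N S_T^\delta[\sigma-\sigma\ast\chi_n]+(\Proj-\Proj_N)S_T^\delta(\sigma)$, so the mollification falls on $\sigma$ itself, which is fixed and $\delta$-independent. For the quadratic term, instead of comparing $\bar Q(\wnd,\wnd)$ directly to $\bar Q(w^\delta,w^\delta)$, it isolates
\[
\bar Q(\wnd(t),\wnd(t))-\bar Q(\Proj_N w^\delta,\Proj_N w^\delta)\ast\chi_n(t)
=\int_\R \bar Q\bigl(\wnd(t)-\Proj_N w^\delta(u),\,\Proj_N w^\delta(u)\bigr)\chi_n(t-u)\,du,
\]
using $\int\chi_n=1$ and bilinearity to bring the convolution \emph{inside} $\bar Q$. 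Combined with the estimate $\|\bar Q(a,b)\|_{H^{-1,0}}\le C(\|a\|_{H^{1,0}}\|b\|_{H^{0,1}}+\|a\|_{H^{0,1}}\|b\|_{H^{1,0}})$, the problem reduces to controlling $\sup_{|h|\le 1/n}\|w^\delta-\tau_h w^\delta\|$ in $L^2_t H^{1,0}$ and $L^\infty_t H^{0,1}$, i.e.\ to equicontinuity of $w^\delta$ itself rather than of $\bar Q(w^\delta,w^\delta)$. This is far more accessible, since $\p_t w^\delta$ is bounded uniformly in $\delta$ directly from equation \eqref{eq:envelope1}. Your decomposition can be completed, but the last step needs precisely this reduction; as written, ``uniformly bounded $\Rightarrow$ mollifier error vanishes uniformly'' is not a valid inference.
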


Hence we work with $\wnd$ instead of $w$ from now on; for all $k,s>0$, there exists a constant $C_{n,N}(k,s)$ such that
$$
\|\wnd\|_{L^\infty(E,W^{k,\infty}([0,T], H^s(\U))}\leq C_{n,N}(k,s).
$$
In the sequel, we denote by  $C_{n,N}$ all constants depending on $n$ and $N$ (and possibly $T, u_0$ and $\sigma$), but not on $\delta$.
\vskip2mm

%%%%%%%%%%%%%%%%%%%%%%%%%%%%%%%%%%%%%%%%%%%%%%%%%%%%%%%%%%%%%%%%%%%%%%%%%%%%%%%%%%%%%%%%%%%%%%%%%%%%%%%%%%%
%%%%%%%%%%%%%%%%%%%%%%%%%%%%%%%%%%%%%%%%%%%%%%%%%%%%%%%%%%%%%%%%%%%%%%%%%%%%%%%%%%%%%%%%%%%%%%%%%%%%%%%%%%%

\noindent{\it $\bullet$ Second step. The boundary layer terms at the first order.}

The boundary layer terms at main order, $\ubl_B$ and $\ubl_{T}$, are defined in Section \ref{sec:BL}, where the function $w$ is replaced by $\wnd$. Thus $\ubl_{B}$ depends in fact on the parameters $n, N$ and $\delta$, and $\ubl_T$ depends on $\delta$.
Using the results of Proposition \ref{prop:estuT0} and  the previous step, the following estimates can be proved:
\begin{lemma}
We recall that $\nu=\cO(\e) $ and $\beta \sqrt{\e\nu}=\cO(1)$.
Setting
\begin{eqnarray*}
\ubl(t,x_h,z)&:=&\ubl_{B}(t,x_h,z)+\ubl_{T}(t,x_h,z)\\
&=&u_{B}\left(t,\frac t\e,x_{h},\frac z{\sqrt{\e\nu}} \right) + u^\text{BL,res}(t,x_{h},z) + u_{T}\left(t,\frac t\e,x_{h},\frac {a-z}{\sqrt{\e\nu}} \right)
\end{eqnarray*}
we have
\begin{eqnarray}\label{est:ubl}
 &&\left\| \ubl,\  z\p_z\ubl ,\ (z-a)\p_z \ubl \right\|_{L^\infty([0,T]\times\T^2\times[0,a]\times E)} \leq C_{n,N} ,\\
\nonumber&&\left\| \ubl \right\|_{L^\infty([0,T]\times E, H^{1,0})}\leq C_{n,N} \nu^{1/4} ,\\
&&\left\|   z\p_z\ubl ,\ (z-a)\p_z \ubl \right\|_{L^\infty([0,T]\times E, L^2(\U))}\leq C_{n,N}\nu^{1/4}
\nonumber.
\end{eqnarray}

Moreover, $\ubl$
 is an approximate solution of the linear part of equation \eqref{eq:depart}, with an error term bounded in $L^\infty([0,T]\times E,L^2(\U))$ by
$$
 C_{n,N}\nu^{1/4} + C \frac{\delta}{\sqrt{\e}}.
$$ 
 
\label{lem:est:ubl-fin}
\end{lemma}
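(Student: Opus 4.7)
The plan is to assemble the bounds already established in Section \ref{sec:BL}: Proposition \ref{prop:estuT0} for the top profile $u_T^\delta$, and Lemma \ref{lem:est_UB} together with Corollary \ref{cor:est_UB} for the bottom profile $u_B$ and the stationary resonant piece $\ustat$. The only systematic bookkeeping concerns the conversion of $L^2$-norms from the stretched boundary layer variable $\zeta$ to the physical variable $z$: with $\zeta=z/\sqrt{\e\nu}$ or $\zeta=(a-z)/\sqrt{\e\nu}$, the Jacobian factor $\sqrt{\e\nu}$ produces a multiplicative $(\e\nu)^{1/4}$ when passing from $L^2_\zeta$ to $L^2_z$. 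Combined with the scaling hypotheses $\sqrt{\e\nu}\beta=\cO(1)$ and $\e\lesssim 1$, this factor is absorbed into $C\nu^{1/4}$.

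First I would collect the pointwise bounds. Proposition \ref{prop:estuT0} directly yields $\|u_T^\delta\|_\infty\leq C\sqrt{\e\nu}\beta\leq C$. Lemma \ref{lem:est_UB} gives $\|u_B\|_\infty \leq C\sum_k|\hat c_{B,h}(t,k)|$; since $\wnd=\Proj_N[w^\delta *_t \chi_n]$ has only finitely many Fourier modes, the coefficients $\hat c_{B,h}(t,k)=-\mean{N_k,\wnd(t)}(n_1(k),n_2(k))$ vanish for $|k|>N$, so this sum is controlled by $C_{n,N}\|\wnd\|_\infty\leq C_{n,N}$, uniformly in $\delta$ thanks to the $\delta$-uniform bound on $w^\delta$ established before \eqref{in:w_wdelta}. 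Lemma \ref{lem:est_UB} similarly controls $\ustat$ in $L^\infty$ by $C_{n,N}$. For the weighted quantities, the stretched-variable identities $z\p_z \ubl_B=\zeta\p_\zeta u_B$ and $(z-a)\p_z \ubl_T=-\zeta\p_\zeta u_T^\delta$ reduce the estimates to those already in Proposition \ref{prop:estuT0} and Lemma \ref{lem:est_UB}; the ``crossed'' contributions (top profile near $z=0$ and vice versa) are exponentially small by Lemma \ref{lem:def-uTBL} and the exponential decay of $u_B$ at infinity.

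Next I would propagate to the $L^\infty_t L^2$ bounds via the rescaling: $\|\ubl_T\|_{L^\infty_t L^2_x}\leq (\e\nu)^{1/4}\|u_T^\delta\|_{L^\infty L^2_\zeta}\leq C(\e\nu)^{1/4}\sqrt{\e\nu}\beta\leq C(\e\nu)^{1/4}\leq C\nu^{1/4}$, and analogously for $u_B$; the resonant piece $\ustat$ contributes the native $(\nu t)^{1/4}$ from Lemma \ref{lem:est_UB}. Since horizontal differentiation commutes with the vertical rescaling, the $L^\infty([0,T]\times E,H^{1,0})$ bound follows by applying the same reasoning to $\nabla_h \ubl$.

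For the residual in the linear part of \eqref{eq:depart}, the two contributions add. Corollary \ref{cor:est_UB} bounds the $\ubl_B$-error in $L^\infty_t L^2$ by $C_N\nu^{1/4}\sup_t (\sum_k|\hat c_{B,h}|^2+|\p_t\hat c_{B,h}|^2)^{1/2}\leq C_{n,N}\nu^{1/4}$, the time-derivative bound being a consequence of the mollification in time defining $\wnd$. Lemma \ref{lem:def-uTBL} bounds the $\ubl_T$-error by $C(1+\delta/\e)(\e\nu)^{1/4}\|\sigma\|$, which I split as $(\e\nu)^{1/4}\leq C\nu^{1/4}$ and $(\delta/\e)(\e\nu)^{1/4}=\delta\e^{-3/4}\nu^{1/4}\leq C\delta/\sqrt\e$; the latter inequality is precisely where the hypothesis $\nu=\cO(\e)$ enters, converting $\nu^{1/4}$ into $\e^{1/4}$. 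The only real point requiring attention is the uniformity of all constants $C_{n,N}$ with respect to $\delta$, which holds because every piece of data feeding the estimates, $\wnd$ and the $L^\infty$-type norms of $\sigma$, is controlled independently of $\delta$.
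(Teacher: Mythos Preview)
Your proposal is correct and follows exactly the approach the paper itself indicates: the paper's proof is the one-line remark that the lemma ``follows immediately from Lemma \ref{lem:def-uTBL}, Proposition \ref{prop:estuT0}, Lemma \ref{lem:est_UB} and Corollary \ref{cor:est_UB},'' and you have simply spelled out the bookkeeping (rescaling $\zeta\mapsto z$, finite Fourier support of $\wnd$, and the use of $\nu=\cO(\e)$ to convert $(\delta/\e)(\e\nu)^{1/4}$ into $C\delta/\sqrt\e$). One cosmetic slip: $(z-a)\p_z\ubl_T=\zeta\p_\zeta u_T^\delta$ without the minus sign, but this does not affect the estimate.
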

The above Lemma follows immediately from Lemma \ref{lem:def-uTBL}, Proposition \ref{prop:estuT0}, Lemma \ref{lem:est_UB} and Corollary \ref{cor:est_UB}.

\vskip2mm

%%%%%%%%%%%%%%%%%%%%%%%%%%%%%%%%%%%%%%%%%%%%%%%%%%%%%%%%%%%%%%%%%%%%%%%%%%%%%%%%%%%%%%%%%%%%%%%%%%%%%%%%%%%
%%%%%%%%%%%%%%%%%%%%%%%%%%%%%%%%%%%%%%%%%%%%%%%%%%%%%%%%%%%%%%%%%%%%%%%%%%%%%%%%%%%%%%%%%%%%%%%%%%%%%%%%%%%

\noindent{\it $\bullet$ Third step. The interior corrector terms $\vint$ and $\delta \uint$.}

We now define the correctors $\vint$ and $\delta \uint$ as in \eqref{def:v3}-\eqref{def:vh} and \eqref{def:deltauint} respectively, taking $w=\wnd$ in \eqref{def:deltauint}. Notice  that the boundary conditions $c_{B,3}$ and $c_{T,3}$ are of order one in $L^\infty$. More precisely, using the fact that $\wnd$ has a finite number of Fourier modes on the one hand, and  {\bf (H1)-(H2)} on the other, we deduce that
$$
 \| \vint\|_{L^\infty([0,T]\times[0,\infty)\times\T^2\times[0,a])}\leq C\left(\sqrt{\e\nu}\| \wnd\|_{L^\infty([0,T], H^3)} + {\nu\e}{\beta}\right)\leq C_{n,N}\sqrt{\nu\e};
$$
moreover,  according to Lemma \ref{lem:bound_deltauint},
$$
\forall \eta>0,\ \forall k\in\Z^3,\ \exists C_{\eta,k}>0,\ \left\| \mean{N_k,\delta\uint\left( t,\frac{t}{\e} \right)} \right\|_{L^\infty([0,T],L^2( E))}\leq \eta + C_{\eta,k}\e.
$$
Thus we set, for $K>0$ arbitrary,
$$
\delta \uint_K:= \Proj_K\delta\uint= \sum_{|k|\leq K}\mean{N_k,\delta\uint} N_k.
$$
 According to the above convergence result, for all $K\in \N$, we have
$$
\left\|\delta \uint_K \left( t,\frac{t}{\e} \right)\right\|_{L^\infty([0,T],L^2( E, W^{1,\infty}(\U)))}\to 0\quad\text{as }\e,\nu\to 0.
$$
Moreover, there exists a constant $C_{n,N,K}$ such that
$$
\left\|\delta \uint_K \left( t,\frac{t}{\e} \right)\right\|_{L^\infty([0,T]\times E, W^{1,\infty}(\U))}\leq C_{n,N,K}.
$$
In the rest of the paper, we set
\be\label{def:uint}
\uint(t):= \mathcal L\left( \frac{t}{\e} \right)\wnd(t) + \vint\left( t,\frac{t}{\e} \right) + \delta \uint_K\left( t,\frac{t}{\e} \right);
\ee
the following lemma holds:
\begin{lemma}
Let $r_{n,N}^{\delta}$ be the remainder term in the equation on $\wnd$ (see Lemma \ref{lem:est:wnd}). Then
the function $\uint$ satisfies
$$
\p_t \uint +\frac{1}{\e}e_3\wedge \uint+ \uint\cdot \nabla \uint-\Delta_h\uint -\nu\p_z^2\uint + \nabla p=\cL\left( \frac{t}{\e} \right)r_{n,N}^\delta+\wrm_1 +\wrm_2 + \wrm_{3},
$$
where $\wrm_1=o(1)$ in $L^2([0,T]\times E\times \T^2 \times [0,a])$, $\wrm_2=o(1)$ in $L^2([0,T]\times E, H^{-1,0}),$ and 
$$
\forall n,N,\quad\lim_{K\to\infty}\sup_{\e,\nu,\beta,\delta}\| \wrm_{3}\|_{L^\infty(E,L^2([0,T]\times\T^2\times[0,a])}=0.
$$
Moreover, 
$$
\uint_{|t=0}=u_0 + o(1)\quad\text{in } L^\infty(E, L^2(\times\U)),
$$
and there exists a constant $C_{n,N,K}$ such that
\be
 \| \uint\|_{L^\infty([0,T]\times E, W^{1,\infty}(\U))}\leq C_{n,N,K}.\label{in:est-uint}
\ee
\label{lem:est-uint}
\end{lemma}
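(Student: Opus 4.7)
The strategy is to substitute \eqref{def:uint} into the left-hand side of \eqref{eq:depart} and exploit the fact that $\wnd$ and $\delta\uint$ were precisely designed to jointly satisfy the identity \eqref{eq:def_v} up to the envelope-equation remainder $r_{n,N}^\delta$ of Lemma~\ref{lem:est:wnd}. Since each of the three summands in \eqref{def:uint} is divergence-free by construction, one only has to identify the forces modulo a pressure gradient. I would first apply the chain rule (accounting for the slow/fast time dependence), using the identity $\cL'(\tau)=-L\cL(\tau)$ to compute
\[
\p_t\uint+\tfrac{1}{\e}e_3\wedge\uint \;=\; \cL(t/\e)\p_t\wnd \;+\; \p_t\vint \;+\; \p_t\delta\uint_K \;+\; \Sigma(t,t/\e) \;+\; \tfrac{1}{\e}\bigl(\p_\tau+L\bigr)\delta\uint_K \;+\; \nabla q,
\]
where $q$ absorbs the pure-pressure part $\frac{1}{\e}(I-\Proj)(e_3\wedge\uint)$ and I used the definition \eqref{def:Sigma} of $\Sigma$.

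The key cancellation comes from applying $\cL(t/\e)$ to equation \eqref{eq:def_v} written with $w=\wnd$: using the commutations $\cL(\tau)Q(\tau,w,w)=\Proj[\cL(\tau)w\cdot\nabla\,\cL(\tau)w]$ and $[\cL(\tau),\Delta_h]=0$, one gets
\[
\cL(t/\e)\p_t\wnd+\Proj[\cL\wnd\cdot\nabla\cL\wnd]-\Delta_h(\cL\wnd)+\Proj\Sigma+\tfrac{1}{\e}(\p_\tau+L)\delta\uint \;=\; \cL(t/\e)\,r_{n,N}^\delta.
\]
Substituting this into the previous display, then adding $\uint\cdot\nabla\uint-\Delta_h\uint-\nu\p_z^2\uint$ and expanding the nonlinearity as $\cL\wnd\cdot\nabla\cL\wnd$ plus mixed terms, all remainder contributions fall into three families: (i) slow time derivatives $\p_t\vint,\,\p_t\delta\uint_K$, mixed nonlinear terms (each containing at least one small factor $\vint=O(\sqrt{\e\nu})$ or $\delta\uint_K=o(1)$), and the viscous term $-\nu\p_z^2\uint$ (controlled via the explicit $\zeta$-derivatives of $u_B,u_T$ and the $W^{2,\infty}$-smoothness of $\wnd$); these collectively form $\wrm_1$ and vanish in $L^2$; (ii) $-\Delta_h(\vint+\delta\uint_K)$, which is one horizontal derivative of something of size $O(\sqrt{\e\nu})$ and $o(1)$ respectively, hence small in $H^{-1,0}$; this is $\wrm_2$; (iii) the truncation error $\wrm_3=\frac{1}{\e}(\p_\tau+L)(\delta\uint_K-\delta\uint)$, computed explicitly from \eqref{def:deltauint} as
\[
(\p_\tau+L)\delta\uint \;=\; \e\,\cL(\tau)\bigl[\bar Q(\wnd,\wnd)-Q(\tau,\wnd,\wnd)\bigr] + \cL(\tau)\,\bar S[c_{B,3},c_{T,3}] - \e\,\Proj\Sigma(t,\tau),
\]
whose projection on $|k|>K$, divided by $\e$, has its $L^2$-tail going to zero as $K\to\infty$ uniformly in $(\e,\nu,\beta,\delta)$ once one checks that the coefficients of $\bar Q,Q,\bar S,\Proj\Sigma$ are $L^2$-summable in $k$ (using the finite number of horizontal modes of $\sigma$ and $\wnd$, the hypotheses \textbf{(H1)-(H2)}, and the explicit polynomial-in-$z$ decay of $\mean{N_k,\vint}$ in $|k_3|$).

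The initial-data approximation is then routine: $\cL(0)\wnd(0)=\Proj_N(u_0\ast_t\chi_n)(0)\to u_0$ in $L^2(E\times\U)$ as $n,N\to\infty$, $\vint(0,0)=O(\sqrt{\e\nu})$ by Lemma~\ref{lem:est:ubl-fin}, and $\delta\uint_K(0,0)=0$ by the integral formula \eqref{def:deltauint}. The $W^{1,\infty}$ bound \eqref{in:est-uint} follows from the $H^s$-smoothness of $\wnd$ (finite spectral cutoff, mollified in $t$), the polynomial-in-$z$ form of $\vint$, and the finite cutoff $\Proj_K$ on $\delta\uint_K$ combined with Lemma~\ref{lem:bound_deltauint}. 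The most delicate point I anticipate is the uniform-in-parameters control of $\wrm_3$: because $\delta\uint$ a priori carries infinitely many vertical modes even when $\wnd$ does not (the vertical profile of $\vint$ and the time spectrum of $\sigma$ both feed high $k_3$ modes into $\Sigma$), the summability argument for the tail must genuinely rely on the explicit rate of decay of the coefficients, and this is precisely the reason why the additional truncation $\Proj_K$ has to be introduced in the definition of $\uint$.
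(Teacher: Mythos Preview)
Your proof plan is essentially the same as the paper's and is correct in its structure: the identification of $\wrm_3$ as the high-frequency tail $\frac{1}{\e}(\Proj-\Proj_K)[\p_\tau\delta\uint+L\delta\uint]$, the use of the designed identity \eqref{eq:def_v} for the key cancellation, and the handling of initial data and the $W^{1,\infty}$ bound all match the paper. One inaccuracy to correct: $\uint$ contains \emph{no} boundary-layer profiles, so controlling $-\nu\p_z^2\uint$ has nothing to do with ``$\zeta$-derivatives of $u_B,u_T$''; it is simply $O(\nu)$ in $L^2$ because $\wnd$, $\vint$ and $\delta\uint_K$ are all smooth in $z$ (finite spectral support for $\wnd$ and $\delta\uint_K$, polynomial in $z$ for $\vint$). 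A second minor point: the bound $\vint=O(\sqrt{\e\nu})$ follows directly from the explicit formulas \eqref{def:v3}--\eqref{def:vh} and the $L^\infty$ bounds on $c_{B,3},c_{T,3}$, not from Lemma~\ref{lem:est:ubl-fin} (which concerns $\ubl$). Finally, the paper places $-\Delta_h\delta\uint_K$ in $\wrm_1$ rather than $\wrm_2$ (since $\delta\uint_K\in H^2$ by the cutoff $\Proj_K$), but your choice works equally well.
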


In the above Lemma and in the rest of the paper,  the $o(1)$ means that for all $n,N,K$, the corresponding expression vanishes as $\e,\nu\to 0$, uniformly in $\delta$.
\vskip2mm

%%%%%%%%%%%%%%%%%%%%%%%%%%%%%%%%%%%%%%%%%%%%%%%%%%%%%%%%%%%%%%%%%%%%%%%%%%%%%%%%%%%%%%%%%%%%%%%%%%%%%%%%%%%
%%%%%%%%%%%%%%%%%%%%%%%%%%%%%%%%%%%%%%%%%%%%%%%%%%%%%%%%%%%%%%%%%%%%%%%%%%%%%%%%%%%%%%%%%%%%%%%%%%%%%%%%%%%

\noindent{\it $\bullet$ Fourth step. The boundary layer term at the second order.}

At this stage, we have exhibited a function $\uint$ (resp. $\ubl$) which is an approximate solution of the evolution equation \eqref{eq:depart} (resp. of its linear part); moreover, the boundary layer term $\ubl$ and the corrector $\vint$ have been built so that the boundary conditions are satisfied at the leading order. Precisely, we have
$$
\begin{aligned}
\ubl_{h|z=0}(t) + \uint_{h|z=0}(t)&=\vint_{h|z=0}(t,t/\e) + \delta \uint_{K,h|z=0}(t,t/\e) + u_{T,h|\zeta=\frac{a}{\sqrt{\e\nu}}}(t,t/\e),\\
  \p_z\left(\ubl_{h|z=a}(t) + \uint_{h|z=a}(t)\right)&=\beta \sigma(t,t/\e)+\frac{1}{\sqrt{\e\nu}}\p_\zeta u_{B,h|\zeta=\frac{a}{\sqrt{\e\nu}}}(t,t/\e) + \p_z u^\text{BL,res}_{h|z=a}(t),\\
\ubl_{3|z=0}(t) + \uint_{3|z=0}(t)&=u_{T,3|\zeta=\frac{a}{\sqrt{\e\nu}}}(t,t/\e),\\
\ubl_{3|z=a}(t) + \uint_{3|z=a}(t)&=u_{B,3|\zeta=\frac{a}{\sqrt{\e\nu}}}(t,t/\e).
\end{aligned}
$$
The  terms $u_{T|\zeta=\frac{a}{\sqrt{\e\nu}}}$, $ \p_\zeta u_{B, h|\zeta=\frac{a}{\sqrt{\e\nu}}}$, $ u_{B, 3|\zeta=\frac{a}{\sqrt{\e\nu}}}$ and $\ustat_{|z=a}$ are exponentially small, thus satisfy the assumptions of Lemma A.2 in the Appendix; they will be taken care of at the very last step. But in general, setting $\tilde c_{B,h}:=\vint_{h|z=0} + \delta\uint_{K,h|z=0}$, the quantity $\e^{-1}\tilde c_{B,h}$ does not vanish. Thus, we define another boundary layer term in order to restore the Dirichlet boundary condition at $z=0$. We now have to make precise which parts are almost periodic or random stationary in $\tilde c_{B,h}(t,\tau)$. We have
$$
\vint_{h|z=0}=\vint_{h}= -\frac{\beta{\e\nu}}{a}\nabla_h \Delta_h^{-1}(c_{T,3}) + \frac{\sqrt{\e\nu}}{a}\nabla_h \Delta_h^{-1}(c_{B,3}).
$$
The first term in the right-hand side is clearly random and stationary, whereas the second one is almost periodic. Concerning the term $\delta \uint_K$, the situation is not so clear. Using \eqref{def:deltauint}, we write
$$
\delta \uint_K(t,\tau)=\sum_{|k|\leq K}e^{-i\lambda_k\tau} \delta b_k(t,\tau) N_k,
$$
where
\begin{eqnarray*}
 \delta b_k(t,\tau) &:=&\e\mean{N_k, \int_0^\tau \left(\bar Q(\wnd,\wnd) - Q(s,\wnd,\wnd)\right)\:ds}\\
&+& \mean{N_k, \int_0^\tau  \bar S[c_{B,3}, c_{T,3}] - \e\mathcal L(-s) \Proj \Sigma(t,s)}.
\end{eqnarray*}
According to Lemma \ref{lem:bound_deltauint},
$$
\sup_{t\in[0,T]}\left\|\delta b_k\left( t,\frac{t}{\e} \right)\right\|_{L^2(E)}=o(1),
$$
and
$$
\sup_{t\in[0,T]}\left\|\frac{\p}{\p t}\delta b_k\left( t,\frac{t}{\e} \right)\right\|_{L^\infty(E)}\leq C_{n,N}.
$$
Thus we forget the fact that $\delta b_k$ depends on the microscopic time variable $\tau$, and we merely treat $\delta \uint_K$ as an almost periodic function. Hence we use the construction of section \ref{sec:BL} (see in particular Remark \ref{rem:BL-Dir-stat} for the random stationary part), and we denote by  $\delta\ubl$ the boundary layer term thus obtained. By definition,
$$
\delta \ubl_{h|z=0}=-\tilde c_{B,h},
$$
and
$$
\p_t \delta \ubl + \frac{1}{\e} e_3\wedge \delta \ubl - \nu \p_z^2 \delta \ubl=o(1)\quad \text{in } L^\infty([0,T]\times E\times \U).
$$
Using the same kind of estimates as in Lemma \ref{lem:est_UB}, we deduce that
$$
\left\| \delta \ubl_h\right\|_{L^2([0,T]\times E\times \U)}=o((\nu)^{1/4}).
$$

\vskip2mm

%%%%%%%%%%%%%%%%%%%%%%%%%%%%%%%%%%%%%%%%%%%%%%%%%%%%%%%%%%%%%%%%%%%%%%%%%%%%%%%%%%%%%%%%%%%%%%%%%%%%%%%%%%%
%%%%%%%%%%%%%%%%%%%%%%%%%%%%%%%%%%%%%%%%%%%%%%%%%%%%%%%%%%%%%%%%%%%%%%%%%%%%%%%%%%%%%%%%%%%%%%%%%%%%%%%%%%%

\noindent{\it $\bullet$ Fifth step.  The ``stopping'' corrector.}

Let us now examine the remaining boundary conditions.
\begin{itemize}
 \item[$\triangleright$] Horizontal component at $z=0$: this term is the simplest of all. We have
$$
\delta_{B,h}(t):=\left(\uint_h(t)+ \ubl_h(t)+ \delta\ubl_h(t)\right)_{|z=a}=u_{T,h|\zeta=\frac{a}{\sqrt{\e\nu}}}(t,t/\e),
$$
and thus, using the same arguments as in Proposition \ref{prop:estuT0}, we prove that there exists a constant $C$ such that
$$\begin{aligned}
   \|\delta_{B,h}(t) \|_{H^3(\T^2)}\leq C \exp\left( -\frac{a}{\sqrt{\e\nu}} \right)\\
\|\p_t\delta_{B,h}(t) \|_{H^3(\T^2)}\leq \frac{C}{\e}\exp\left( -\frac{a}{\sqrt{\e\nu}} \right).
  \end{aligned}
$$
Since $\e^{-k}\exp\left( -{a}/{\sqrt{\e\nu}} \right)=o(1)$ for all $k\in\N^*$,  $\delta_{B,h}$ satisfies the conditions of Lemma A.2 in the Appendix.

\item[$\triangleright$] Vertical component at $z=0$: we compute
$$
 \delta_{B,3}(t):=\left(\uint_3(t)+ \ubl_3(t)+ \delta\ubl_3(t)\right)_{|z=0}=u_{T,3|\zeta=\frac{a}{\sqrt{\e\nu}}}(t,t/\e) + \delta\ubl_{3|z=0}(t).
$$
It is easily proved that $u_{T,3|\zeta=a/\sqrt{\e\nu}}(t,t/\e) $ satisfies the hypotheses of Lemma A.2, provided $\sigma$ is sufficiently smooth. Concerning $\delta\ubl_{3}$, we have, according to the assumptions on $\sigma,$
$$\begin{aligned}
   \|\delta\ubl_{3|z=0} \|_{L^\infty([0,T], L^2(E,H^3(\T^2))}\leq o(\sqrt{\e\nu}) + C_{n,N,K}{(\nu\e)^{3/2}}{\beta}\leq o(\e),\\
\|\p_t\delta\ubl_{3|z=0} \|_{L^\infty([0,T], L^2(E,H^3(\T^2)))}=o(1).
  \end{aligned}
$$
Thus $\delta_{B,3}$ satisfies the conditions of Lemma A.2. 
\item[$\triangleright$] Horizontal component at $z=a$:
\begin{eqnarray*}
 \delta_{T,h}(t)&=&\p_z\left(\uint_h(t)+ \ubl_h(t)+ \delta\ubl_h(t)\right)_{|z=a}- \frac{1}{\beta}\sigma\left( t,\frac{t}{\e} \right)\\
&=&\frac{1}{\sqrt{\e\nu}}\p_\zeta u_{B,h|\zeta=\frac{a}{\sqrt{\e\nu}}}(t,t/\e) + \p_z \ustat_{h|z=a}(t)+ \p_z  \delta\ubl_{h|z=a}(t).
\end{eqnarray*}
For all $s>0$, we have
$$\begin{aligned}
   \left\|\p_z \ustat_{h|z=a} \right\|_{L^\infty([0,T]\times E, H^s(\T^2))}\leq C_{n,N} \frac{1}{\sqrt{\nu T}}\exp\left( -\frac{a^2}{4\nu T} \right)=o(\e),\\
\left\|\p_t\p_z \ustat_{h|z=a}(t) \right\|_{L^\infty([0,T]\times E, H^s(\T^2))}\leq C_{n,N} \frac{1}{\nu^{3/2} }\exp\left( -\frac{a^2}{4\nu T} \right)=o(\e).
  \end{aligned}
$$
(Remember that $\nu=\mathcal O(\e)$.)
Thus all terms of the right-hand side are exponentially small as $\e$ vanishes, and  $\delta_{T,h}$ satisfies the conditions of Lemma A.2.

\item[$\triangleright$] Vertical component at $z=a$: let
\begin{eqnarray*}
 \delta_{T,3}(t)&:=&\left(\uint_3(t)+ \ubl_3(t)+ \delta\ubl_3(t)\right)_{|z=a}\\&=&u_{B,3|\zeta=\frac{a}{\sqrt{\e\nu}}}(t,t/\e) + \delta\ubl_{3|\zeta=\frac{a}{\sqrt{\e\nu}}}(t).
\end{eqnarray*}
Once again, $ \delta_{T,3}$ is exponentially small in all $H^s$ norms, and thus matches the conditions of Lemma A.2.

\end{itemize}
We thus define $u^{\text{stop}}$, given by Lemma A.2, so that
$$\begin{aligned}
u^{\text{stop}}_{h|z=0}=-\delta_{B,h},\quad&\p_zu^{\text{stop}}_{h|z=a}=-\delta_{T,h}\\
u^{\text{stop}}_{3|z=0}=-\delta_{B,3},\quad&u^{\text{stop}}_{3|z=a}=-\delta_{T,3},
\end{aligned}
$$
and such that $u^{\text{stop}}$ is an approximate solution of the linear part of equation \eqref{eq:depart}, with an error term which is $o(1)$ in $L^2$. Notice that the corrector $u^{\text{stop}}$ itself is $o(\e)$ in $L^2$.

\vskip2mm

We now define
\begin{eqnarray}\label{def:uapp}
 u^{\text{app}}&:=& \uint+ \ubl+ \delta \ubl + u^{\text{stop}}\\
&=& \uint+ u^{\text{rem}}.
\end{eqnarray}
By construction, the remainder $u^{\text{rem}}$ is $o(1)$ in $L^\infty([0,T], L^2(E\times \U)$ and
$
u^{\text{app}}
$ satisfies conditions \eqref{CL}. The goal of the next paragraph is to prove that $u^{\text{app}}$ is an approximate solution of \eqref{eq:depart}, which allows us to  conclude in paragraph \ref{ssec:energy} that $\ug- u^{\text{app}}$ vanishes thanks to an energy estimate.

\subsection{Estimates  on the approximate solution}
We start by proving the lemmas stated in the previous paragraph.

$\bullet$\textit{ Proof of Lemma \ref{lem:est:wnd} (Estimates on $\wnd$).}
 
Remembering \eqref{eq:envelope1}, it is easily checked that $\wnd$ satisfies 
$$
\p_t \wnd + \Proj_N(\bar Q(w^\delta, w^\delta))\ast \chi_n - \Delta_h \wnd + \sqrt{\frac{\nu}{\e}} S_B(\wnd) +\nu\beta \Proj_N S_T^\delta (\sigma\ast \chi_n)=0.
$$
Thus $\wnd$ is an approximate solution of \eqref{eq:envelope1}, with an error term $r_{n,N}^\delta$ equal to
\begin{eqnarray}
r_{n,N}^\delta&=& \bar Q(\wnd,\wnd)-\Proj_N\bar Q(w^\delta,w^\delta)\ast\chi_n + 
{\nu}{\beta}(S_T^\delta(\sigma) - \Proj_N S_T^\delta (\sigma \ast \chi_n))
\nonumber\\
&=&\label{def:rnd}\left[ (\Proj- \Proj_N)\bar Q(w^\delta, w^\delta) \right]\ast \chi_n+ \left[ \bar Q(\Proj_N w^\delta, \Proj_N w^\delta )- \bar Q(w^\delta, w^\delta) \right]\ast \chi_n \\
&&\nonumber+\left[\bar Q(\wnd,\wnd)- \bar Q(\Proj_N w^\delta, \Proj_N w^\delta )\ast \chi_n\right]  + 
{\nu}{\beta}\Proj_NS_T^\delta[\sigma-\sigma\ast\chi_n] + \nu\beta (\Proj-\Proj_N ) S_T^\delta (\sigma).
\end{eqnarray}
In order to evaluate $r_{n,N}^\delta$, we need continuity estimates on the quadratic term $\bar Q$. 
We recall that $\bar Q$ is bilinear continuous from
$$
L^\infty([0,T], H^{0,1})\times L^2([0,T], H^{1,0})\quad\text{into}\quad L^2([0,T], H^{-1,0}).
$$
(see Proposition 6.6 in \cite{CheminDGG} for a proof of this non trivial fact). Moreover, for $a,b\in H^1\cap \cH,$
it can be proved, using the methods of \cite{CheminDGG}, that there exists a constant $C>0$ such that
\begin{eqnarray}
\| \bar Q(a,b)\|_{H^{-1,0}}&\leq &C \| a\|^{1/2}_{L^2}\| a\|^{1/2}_{H^{1,0}} \| b\|^{1/2}_{L^2}\| b\|^{1/2}_{H^{1,0}}  \nonumber\\
&&+C\| \p_3 a\| _{L^2 }\| b\|^{1/2}_{L^2}\| b\|^{1/2}_{H^{1,0}}+ C\| \p_3 b\| _{L^2 }\| a\|^{1/2}_{L^2}\| a\|^{1/2}_{H^{1,0}}\nonumber\\
&\leq & C\left( \| a\|_{H^{1,0}}\| b\|_{H^{0,1}} + \| a\|_{H^{0,1}}\| b\|_{H^{1,0}}  \right)\label{cont-Q}.
\end{eqnarray}
It is easily deduced from the above inequality that the  two terms in line \eqref{def:rnd} converge towards zero; 
on the other hand, the regularity of $\sigma$ entails that $S_T^\delta[\sigma-\sigma\ast\chi_n]$ vahishes in $L^2$ as $n\to\infty$, uniformly in $\delta$, together with $ (\Proj-\Proj_N ) S_T^\delta (\sigma)$.
We thus focus on the last term in the expression of $r_{n,N}^\delta$, which we write 
\begin{eqnarray*}
&&\bar Q(\wnd(t),\wnd(t))-\bar Q(\Proj_N w^\delta, \Proj_N w^\delta )\ast \chi_n\\&=&\int_{\R}\bar Q(\wnd(t),\Proj_Nw^\delta(u))\chi_n(t-u)\:du - \int_{\R}\bar Q(\Proj_Nw^\delta(u),\Proj_Nw^\delta(u))\chi_n(t-u)\:du \\
&=&\int_{\R}\bar Q(\wnd(t)-\Proj_Nw^\delta(u),\Proj_Nw^\delta(u))\chi_n(t-u)\:du,
\end{eqnarray*}
and thus, using inequality \eqref{cont-Q} together with the $L^\infty([0,T], H^{0,1})$ bound on $w^\delta$, we infer
\begin{eqnarray*}
&&\left\| \bar Q(\wnd(t),\wnd(t))-\bar Q(w^\delta,w^\delta)\ast\chi_n (t)\right\|_{H^{-1,0}}\\
&\leq & C\int_{\R} \left\|  \wnd(t)-\Proj_Nw^\delta(u) \right\|_{H^{0,1}}\left\|\Proj_Nw^\delta(u)\right\|_{H^{1,0}}\chi_n(t-u)\:du\\
&+& C\int_{\R} \left\|  \wnd(t)-\Proj_Nw^\delta(u) \right\|_{H^{1,0}}\left\|\Proj_Nw^\delta(u)\right\|_{H^{0,1}}\chi_n(t-u)\:du.
\end{eqnarray*}
Eventually, we get
\begin{eqnarray*}
&&\left\| \bar Q(\wnd(t),\wnd(t))-\bar Q(w^\delta,w^\delta)\ast\chi_n (t)\right\|_{L^\infty(E,L^2([0,T],H^{-1,0}))}\\&\leq & C \sup_{|h|\leq \frac{1}{n}}\left\| w^\delta- \tau_hw^\delta \right\|_{L^\infty(E,L^2([0,T], H^{1,0}))}+  C \sup_{|h|\leq \frac{1}{n}}\left\| w^\delta- \tau_hw^\delta \right\|_{L^\infty([0,T]\times E, H^{0,1})},
\end{eqnarray*}
where $\tau_h w:(t,x)\mapsto w(t+h,x).$ The right-hand side of the above inequality vanishes as $n\to\infty$, uniformly in $\delta$.

Thus $r_{n,N}^\delta $ vanishes as $n,N\to\infty$ in $L^2([0,T]\times E, H^{-1,0})$, uniformly in $\delta$. $\square$

%%%%%%%%%%%%%%%%%%%%%%%%%%%%%%%%%%%%%%%%%%%%%%%%%%%%%%%%%%%%%%%%%%%%%%%%%%%%%%%%%%%%%%%%%%%%%%%%%%%%%%%%%%%%%%%%%%%%%%%%%%%%%%
%%%%%%%%%%%%%%%%%%%%%%%%%%%%%%%%%%%%%%%%%%%%%%%%%%%%%%%%%%%%%%%%%%%%%%%%%%%%%%%%%%%%%%%%%%%%%%%%%%%%%%%%%%%%%%%%%%%%%%%%%%%%%%
\vskip2mm
Hence we have proved that $\wnd$ is an approximate solution of \eqref{eq:envelope1}. We now tackle the bounds on $\uint$.

\vskip1mm

$\bullet$ \textit{Proof of Lemma \ref{lem:est-uint} (Estimates on $\uint$).}
 First of all, the estimate \eqref{in:est-uint} is easily deduced from the previous bounds on $\wnd,\vint$ and $\delta \uint$. Thus the main point is to check that the assertions on $\wrm_i$, $i=1,2,3$, hold true.

We begin with the term $\wrm_3$, which is due to the truncation of the large frequencies in $\delta\uint$; precisely, we have
$$
\wrm_3(t):=\frac{1}{\e}(\Proj-\Proj_K)\left[ \p_\tau \delta\uint + L \delta\uint \right]\left( t,\frac{t}{\e} \right).
$$
Remembering the definition of $\delta\uint$ (see \eqref{def:deltauint}), we infer
\begin{eqnarray*}
&&\| \wrm_3\|_{L^\infty(E,L^2([0,T]\times\T^2\times[0,a])}\\
&\leq & \left\| (\Proj-\Proj_K)\left[\bar Q(\wnd,\wnd)\right]  \right\|_{L^\infty(E,L^2([0,T]\times\U))}
\\&+& \left\| (\Proj-\Proj_K)\left[Q(s,\wnd,\wnd)\right]  \right\|_{L^\infty([0,\infty)_s\times E,L^2([0,T]\times\U))}
\\&+&\frac{1}{\e}\left\| (\Proj-\Proj_K)\left[\bar S[c_{B,3}, c_{T,3}]\right]  \right\|_{L^\infty(E,L^2([0,T]\times\U))}
\\&+&\left\| (\Proj-\Proj_K)\Sigma\right\|_{L^\infty([0,\infty)\times E,L^2([0,T]\times\U))}.
\end{eqnarray*}
If $\nu=\mathcal O(\e)$, and $\sqrt{\nu\e}\beta=\mathcal O(1)$, all terms vanish  as $K\to\infty$ uniformly in $\e,\nu,\delta$. Thus the condition on $\wrm_3$ is satisfied.

On the other hand, we have defined $\vint$ and $\delta\uint$ so that $\uint$ is an approximate solution of equation \eqref{eq:depart}, with an error term which we now evaluate in $L^2([0,T]\times \U \times E) + L^2([0,T]\times E, H^{-1,0})$. Apart from the one mentioned above, which is due to the truncation of the large spatial frequencies in $\delta\uint$, the error term is equal to
$$\begin{aligned}
\mathcal L\left(\frac{t}{\e}\right)r_{n,N}^\delta(t)+\left[(\p_t -\Delta_h -\nu \p_z^2)(\delta \uint_K+\vint)\right]\left(t,\frac{t}{\e} \right)
\\
+\left[ \uint\cdot \nabla \right](\delta \uint_K + \vint)\left( t,\frac{t}{\e} \right) + \left[(\delta \uint_K + \vint)\left( t,\frac{t}{\e} \right) \cdot \nabla \right] \mathcal L\left( \frac{t}{\e} \right)\wnd(t).
\end{aligned}
$$
Let $
\wrm_2(t):=-\Delta_h \vint\left( t,t/\e \right).
$
Then $\wrm_2$ is bounded in $L^2([0,T]\times E, H^{-1,0})$ by
$$
\sqrt{\e\nu}\| c_{B,3}\|_{L^\infty([0,T]\times [0,\infty)_\tau\times E, H^1(\T^2))} + \e\nu\beta \| c_{T,3}\|_{L^\infty([0,T]\times [0,\infty)_\tau\times E, H^1(\T^2))}=o(1).
$$
Keeping aside $\mathcal L\left(t/\e\right)r_{n,N}^\delta(t)$, the remaining
  error terms are bounded in $L^2([0,T]\times \T^2 \times [0,a]\times E)$ by
\begin{eqnarray*}
&& \| \p_t\delta \uint_K\|_{L^\infty([0,T]_t\times \left[0,\frac{T}{\e}\right]_\tau, L^2(E\times \U))}\\
 &+ &\| \p_t\vint\|_{L^\infty([0,\infty)_\tau, L^2([0,T]\times \T^2 \times [0,a]\times E)}\\
&+& \|\delta \uint_K  \|_{L^\infty([0,T]_t\times \left[0,\frac{T}{\e}\right]_\tau, L^2(E, H^2))} \\
&+& \| \uint\|_{L^\infty} \|\delta \uint_K + \vint \|_{L^\infty([0,T]_t\times \left[0,\frac{T}{\e}\right]_\tau, L^2(E, H^1)) }\\
&+&\| \uint\|_{L^\infty(E, L^2([0,T], H^1))}\|\delta \uint_K + \vint\|_{L^2(E, L^\infty([0,T]_t\times \left[0,\frac{T}{\e}\right]_\tau\times \U))}\\
&= & o(1) .
  \end{eqnarray*}
Above, we have used the fact that $\wnd$, and whence $\vint$, $\delta \uint_K$, are smooth with respect to the time variable $t$.
$\square$

\vskip2mm

$\bullet$ At this stage, we know that $\uint$ is an approximate solution of \eqref{eq:depart}, and that $u^\text{rem}$ is an approximate solution of the linear part of \eqref{eq:depart}, such that additionally $u^\text{rem}=o(1)$. There remains to prove that the function $u^{\text{app}}=\uint + u^\text{rem}$ is an approximate solution of equation \eqref{eq:depart}. 
The core of the proof  lies in the following Lemma:
\begin{lemma}[Non linear estimate on the remainder term]
For all $n,N, $ as $\e,\nu\to 0$ with $\nu=\mathcal O(\e)$ and $\beta\sqrt{\e\nu}=\mathcal O(1),$ we have
$$
\sup_{\delta>0}\left\| \uint\cdot \nabla  u^{\text{rem}} + u^{\text{rem}}\cdot \nabla \uint + u^{\text{rem}}\cdot \nabla u^{\text{rem}}\right\|_{L^2([0,T]\times \T^2\times[0,a]\times E)}\to 0.
$$

\label{lem:non_lin_est}
\end{lemma}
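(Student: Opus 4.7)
The plan is to decompose $u^{\text{rem}}=\ubl+\delta\ubl+u^{\text{stop}}$ and $\uint=\cL(t/\e)\wnd+\vint+\delta\uint_K$, and to bound the three products separately by systematically exploiting the estimates of Lemmas~\ref{lem:est:ubl-fin} and~\ref{lem:est-uint}. The tame product $u^{\text{rem}}\cdot\nabla\uint$ is disposed of immediately: \eqref{in:est-uint} gives $\|\nabla\uint\|_{L^\infty([0,T]\times E\times\U)}\leq C_{n,N,K}$, while by construction each of $\ubl$, $\delta\ubl$ and $u^{\text{stop}}$ vanishes in $L^\infty([0,T], L^2(\U\times E))$, so $\|u^{\text{rem}}\cdot\nabla\uint\|_{L^2}=o(1)$.

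For $\uint\cdot\nabla u^{\text{rem}}$ I would split into horizontal and vertical parts. Horizontally, $\|\uint\|_{L^\infty}\|\nabla_h u^{\text{rem}}\|_{L^2}\leq C_{n,N,K}\cdot o(1)$, using \eqref{est:ubl} for $\ubl$ and the analogous bounds for $\delta\ubl$ and $u^{\text{stop}}$. The vertical part is delicate: a rescaling $\zeta=z/\sqrt{\e\nu}$ of the boundary-layer bounds from Lemma~\ref{lem:est_UB} and Proposition~\ref{prop:estuT0} shows that $\|\p_z u^{\text{rem}}\|_{L^2}\leq C_{n,N}(\e\nu)^{-1/4}$, so I must extract a compensating factor from $\uint_3$. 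Write $\uint_3=A+\vint_3$ with $A:=(\cL(t/\e)\wnd)_3+\delta\uint_{K,3}$. The key structural observation is that $A$ vanishes at $z=0$ and $z=a$: indeed $\wnd\in\cH$ and $\cL$ leaves $\cH$ invariant, while inspection of \eqref{def:deltauint} shows that $\delta\uint$ is built entirely from $\bar Q$, $Q(s,\cdot,\cdot)$, $\bar S[c_{B,3},c_{T,3}]$ and $\Proj\Sigma$, each of which takes values in $\cH$, so that $\delta\uint_{K,3}$ also vanishes at the horizontal boundaries. Smoothness of $A$ in $z$ yields $|A(x_h,z)|\leq\min(z,a-z)\|\p_z A\|_{L^\infty}\leq C_{n,N,K}\min(z,a-z)$, and the Hardy-type estimates of Lemma~\ref{lem:est:ubl-fin} (extended in the obvious way to $\delta\ubl$ and $u^{\text{stop}}$) give
\[
\|A\,\p_z u^{\text{rem}}\|_{L^2}\leq C_{n,N,K}\bigl(\|z\,\p_z u^{\text{rem}}\|_{L^2}+\|(a-z)\p_z u^{\text{rem}}\|_{L^2}\bigr)\leq C_{n,N,K}\,\nu^{1/4}.
\]
For $\vint_3$, the explicit formula \eqref{def:v3} together with $\beta\sqrt{\e\nu}=\cO(1)$ gives $\|\vint_3\|_{L^\infty}\leq C\sqrt{\e\nu}$, so $\|\vint_3\,\p_z u^{\text{rem}}\|_{L^2}\leq C\sqrt{\e\nu}\cdot C_{n,N}(\e\nu)^{-1/4}=\cO((\e\nu)^{1/4})$.

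The self-interaction $u^{\text{rem}}\cdot\nabla u^{\text{rem}}$ follows the same pattern. The horizontal part is controlled by $\|u^{\text{rem}}_h\|_{L^\infty}\|\nabla_h u^{\text{rem}}\|_{L^2}\leq C_{n,N}\cdot o(1)$ via \eqref{est:ubl}. For the vertical derivative I would exploit that every third component of $u^{\text{rem}}$ carries an extra factor $\sqrt{\e\nu}$: from Lemma~\ref{lem:est_UB}, Proposition~\ref{prop:estuT0} and the analogous bounds for $\delta\ubl$ and $u^{\text{stop}}$ one obtains $\|u^{\text{rem}}_3\|_{L^\infty}\leq C_{n,N}\sqrt{\e\nu}$, and combining with $\|\p_z u^{\text{rem}}\|_{L^2}\leq C_{n,N}(\e\nu)^{-1/4}$ yields again an $\cO((\e\nu)^{1/4})$ bound.

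The main obstacle is the vertical piece of $\uint\cdot\nabla u^{\text{rem}}$. Without the structural observation that $\delta\uint$ lies in $\cH$, the best one could assert about $\delta\uint_{K,3}$ is that its $L^\infty$ norm is $o(1)$ without any explicit rate (this is all that Lemma~\ref{lem:bound_deltauint} provides once one sets $\tau=t/\e$), and this rate-free smallness does not beat the $(\e\nu)^{-1/4}$ blow-up of $\|\p_z u^{\text{rem}}\|_{L^2}$ in the regime $\nu=\cO(\e)$. Having $\delta\uint_{K,3|z=0,a}=0$ is exactly what allows the Hardy-type absorption and makes the whole estimate close.
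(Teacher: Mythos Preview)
Your approach is correct and shares the same backbone with the paper's proof: the decisive point in both is that the third component multiplying $\p_z(\ubl+\delta\ubl)$ vanishes at $z=0$ and $z=a$, so that the weighted estimates $\|z\p_z\ubl\|$ and $\|(a-z)\p_z\ubl\|$ from Lemma~\ref{lem:est:ubl-fin} absorb the $(\e\nu)^{-1/4}$ loss.

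The organisation differs slightly. The paper groups $\uint\cdot\nabla u^{\text{rem}}+u^{\text{rem}}\cdot\nabla u^{\text{rem}}=\uapp\cdot\nabla u^{\text{rem}}$ and exploits that $\uapp$ satisfies the \emph{exact} boundary conditions, so $\uapp_{3|z=0,a}=0$. It then splits $\uapp$ into three brackets whose third components all vanish at the boundaries, and on the groups that are not $W^{1,\infty}$ (namely $\vint+\ubl$ and $\delta\ubl+u^{\text{stop}}$) it uses Hardy's inequality together with the divergence-free identity $\p_z\uapp_3=-\dv_h\uapp_h$ to control $\|z^{-1}(\cdot)_3\|_{L^2}$ by $\|(\cdot)_h\|_{H^{1,0}}$; this is paired with the $L^\infty$ bound on $z\p_z(u_B+\delta\ubl)$. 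You instead keep $\uint$ and $u^{\text{rem}}$ separate and use the direct pointwise bounds $|\vint_3|,\ |u^{\text{rem}}_3|\leq C_{n,N}\sqrt{\e\nu}$, which spares you Hardy's inequality but requires checking this $L^\infty$ smallness for each third component (in particular $u_{T,3}^\delta=\cO(\e\nu\beta)=\cO(\sqrt{\e\nu})$ and $\ustat_3=0$). Both routes close; the paper's grouping into $\uapp$ is a bit cleaner because the exact vanishing comes for free from the construction, while yours is more hands-on but avoids the Hardy/divergence-free trick. One small point: your inequality $\|A\p_z u^{\text{rem}}\|_{L^2}\leq C(\|z\p_z u^{\text{rem}}\|_{L^2}+\|(a-z)\p_z u^{\text{rem}}\|_{L^2})$ should really be read with the domain split at $z=a/2$ (as the paper does), since e.g.\ $z\p_z u_T$ is not small near $z=a$; only $\|\min(z,a-z)\p_z u^{\text{rem}}\|_{L^2}$ is genuinely $\cO(\nu^{1/4})$.
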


\begin{proof}
 First, we have
\begin{eqnarray*}
&&\left\| \left(u^{\text{rem}}\cdot \nabla\right) \uint \right\|_{L^2([0,T]\times \T^2\times[0,a]\times E)}\\&\leq & \left\|u^{\text{rem}}\right\|_{L^2([0,T]\times \T^2\times[0,a]\times E)} \left\|  \uint \right\|_{L^\infty([0,T]\times E, W^{1,\infty})}\\
&\leq & C_{n,N,K}\left( \|\ubl \|_{L^2} +\|\delta \ubl\|_{L^2} + \| u^{\text{stop}}\|_{L^2} \right).
\end{eqnarray*}
The right-hand side vanishes thanks to the estimates of the previous paragraph.

The other terms are slightly more complicated. We write
\begin{eqnarray*}
 \uint\cdot \nabla  u^{\text{rem}} + u^{\text{rem}}\cdot \nabla u^{\text{rem}}&=&u^{\text{app}}\cdot \nabla u^{\text{rem}}\\
&=&u^{\text{app}}\cdot \nabla u^{\text{stop}}+u^{\text{app}}\cdot \nabla \left( \ubl + \delta \ubl \right).
\end{eqnarray*}

The first term in the right-hand side is bounded in $L^2([0,T]\times E\times \U)$ by
$$
\|u^{\text{app}} \|_{L^\infty}\| u^{\text{stop}}\|_{L^2([0,T]\times E, H^1)}\leq C_{n,N,K}\e.
$$
We thus focus on the second term, which we further split into
$$
u^{\text{app}}_h\cdot \nabla_h \left( \ubl + \delta \ubl \right) + u^{\text{app}}_3\p_z \left( \ubl + \delta \ubl \right).
$$
We have
\begin{eqnarray*}
&& \left\|  u^{\text{app}}_h\cdot \nabla_h \left( \ubl + \delta \ubl \right)\right\|_{L^2([0,T]\times E\times \U)}\\
&\leq & \|  u^{\text{app}}\|_{L^\infty([0,T]\times E\times \U)} \|\ubl + \delta \ubl  \|_{L^2([0,T]\times E, H^{1,0})}\\
&\leq & C_{n,N,K}\nu^{1/4 } .
\end{eqnarray*}

We split the other term as follows
\begin{eqnarray*}
\left\| u^{\text{app}}_3\p_z \left( \ubl + \delta \ubl \right) \right\|_{L^2(\U)}^2
&=&\int_{\T^2}\int_0^{a/2}\left|u^{\text{app}}_3\p_z \left( u_B + \delta \ubl \right) \right|^2\\
&+&\int_{\T^2}\int_0^{a/2}\left|u^{\text{app}}_3\p_z  u_T  \right|^2\\
&+& \int_{\T^2}\int_{a/2}^a\left|u^{\text{app}}_3\p_z \left( u_B + \delta \ubl \right) \right|^2\\
&+&\int_{\T^2}\int_{a/2}^a\left|u^{\text{app}}_3\p_z u_T  \right|^2.
\end{eqnarray*}
For $z\geq a/2$, $t>0$, we have
$$
\left|\p_z \left( u_B + \delta \ubl \right)(t)\right|^2\leq C_{n,N}\left[(\e\nu)^{-1} \exp\left( -\frac{ca}{\sqrt{\e\nu}} \right)+ \frac{1}{\nu t}\exp\left( -\frac{ca}{\sqrt{\nu t} }\right) \right]
$$
and thus
\begin{eqnarray*}
&&\int_0^T\int_{\T^2}\int_{a/2}^a\left|u^{\text{app}}_3\p_z \left( u_B + \delta \ubl \right) \right|^2\\
&\leq & C_{n,N}\left[(\e\nu)^{-1} \exp\left( -\frac{ca}{\sqrt{\e\nu}} \right)+ \exp\left( -\frac{ca}{\sqrt{\nu T}} \right) \right].
\end{eqnarray*}
Similarly,
\begin{eqnarray*}
 &&\int_0^T\int_{\T^2}\int_0^{a/2}\left|u^{\text{app}}_3\p_z  u_T  \right|^2\\
&\leq & C_{n,N}\beta^2 \exp\left( -\frac{ca}{\sqrt{\e\nu}} \right)\leq C_{n,N} (\e\nu)^{-1} \exp\left( -\frac{ca}{\sqrt{\e\nu}} \right).
\end{eqnarray*}
We now evaluate the two remaining terms. The idea is the following: since $u^{\text{app}}_3$ vanishes at the boundary, we have
$$\begin{aligned}
   u^{\text{app}}_3(z)\approx C z\quad\text{for } z=o(1),\\
\text{and }u^{\text{app}}_3(z)\approx C (z-a)\quad\text{for } z-a=o(1),
  \end{aligned}
$$
and $z\p_z u_B$, $(z-a)\p_z u_T$ are evaluated in \eqref{est:ubl}. Moreover, we can split $\uapp$ into
\begin{eqnarray*}
 u^{\text{app}}(t)&=& \left[\mathcal L\left( \frac{t}{\e} \right)\wnd(t) + \delta \uint_K\left(t,\frac{t}{\e} \right) \right] +
\left[ \vint\left( t,\frac{t}{\e} \right) + \ubl(t)\right] \\&+& \left[ \delta\ubl(t) + u^{\text{stop}}(t) \right].
\end{eqnarray*}
By definition of $\vint$ and $u^{\text{stop}}$, the vertical components of each of the three terms in brackets vanish at $z=0$ and $z=a$; additionally, the first term is bounded in $L^\infty([0,T]\times E, W^{1,\infty})$ by a constant $C_{n,N,K}$, while the (vertical components of the) second and  third ones are respectively of order
$$
   C_{n,N}\left(\sqrt{\e\nu} +(\e\nu)^{3/4}\right) \quad\text{and}\quad
o((\e\nu)^{3/4})+ o(\e)
 $$
in $L^\infty([0,T]\times E, H^{1,0}).$ Once again, the formulation $u^{\text{stop}}=o(\e)$ must be understood as 
$$
\forall n, N ,K,\ \lim_{\e,\nu\to 0} \sup_{\delta>0}\e^{-1}\|u^{\text{stop} }\|=0.
$$

As a consequence, we have
\begin{eqnarray*}
 &&\int_{\T^2}\int_0^{a/2}\left|u^{\text{app}}_3(t)\p_z \left( u_B + \delta \ubl \right)(t) \right|^2 \\
&\leq & \left\|  z^{-1}\left[\left(\mathcal L\left( \frac{t}{\e} \right)\wnd\right)_3(t) + \delta \uint_{K,3}\left(t,\frac{t}{\e}\right)\right]\right\|_{L^\infty}^2\left\| z\p_z \left( u_B + \delta \ubl \right)(t) \right\|_{L^2}^2\\
&+& \left\|  z^{-1}\left[\vint_3\left( t,\frac{t}{\e} \right) + \ubl_3(t)+\delta\ubl_3(t) + u^{\text{stop}}_3(t)\right]\right\|_{L^2}^2\left\| z\p_z \left( u_B + \delta \ubl \right)(t) \right\|_{L^\infty}^2.
\end{eqnarray*}
Using Hardy's inequality together with the divergence-free property, we infer that
\begin{eqnarray*}
 &&\int_{\T^2}\int_0^{a/2}\left|u^{\text{app}}_3(t)\p_z \left( u_B + \delta \ubl \right)(t) \right|^2 \\
&\leq & C_{n,N,K}\nu^{1/2}\left\|  \p_z\cL \left( \frac{t}{\e} \right)\wnd + \delta \uint_{K}\left(t,\frac{t}{\e}\right)\right\|_{L^\infty}^2\\
&+&C_{n,N,K}\left\| \p_z\left[\vint_3\left( t,\frac{t}{\e} \right) + \ubl_3(t)+\delta\ubl_3(t) + u^{\text{stop}}_3(t)\right]\right\|_{L^2}^2\\
&\leq & C_{n,N,K}\nu^{1/2}\left\|  \mathcal L\left( \frac{t}{\e} \right)\wnd(t) + \delta \uint_{K}\left(t,\frac{t}{\e}\right)\right\|_{W^{1,\infty}}^2\\
&+&C_{n,N,K}\left\| \vint_h\left( t,\frac{t}{\e} \right) + \ubl_h(t)+\delta\ubl_h(t) + u^{\text{stop}}_h(t)\right\|_{H^{1,0}}^2\\
&\leq & o(1).
\end{eqnarray*}
The term
$$
\int_{\T^2}\int_{a/2}^a\left|u^{\text{app}}_3(t)\p_z u_T (t) \right|^2
$$
is treated in a similar way. Gathering all the terms, we deduce  the convergence result stated in Lemma \ref{lem:non_lin_est}.
\end{proof}

In the rest of this section, following the notations introduced in Lemma \ref{lem:est-uint}, we denote by $\wrm_1$ any term which satisfies  
\be\label{wrm1}
\forall n,N,K,\quad\lim_{\e\to 0}\sup_{\delta>0}\left\|  \wrm_1 \right\|_{L^2([0,T]\times E\times \T^2\times[0,a])}=0,
\ee
by $\wrm_2$ any term which satisfies 
\be\label{wrm2}
\forall n,N,K,\quad\lim_{\e\to 0}\sup_{\delta>0}\left\|  \wrm_2 \right\|_{L^2([0,T]\times E, H^{-1,0})}= 0,
\ee
and by $\wrm_3$ any term which satisfies 
\be\label{wrm3}
\forall n,N,\quad\lim_{K\to\infty}\sup_{\e,\nu,\beta,\delta}\| \wrm_{3}\|_{L^\infty([0,\infty)\times E,L^2([0,T]\times\T^2\times[0,a])}=0.
\ee

According to Lemmas \ref{lem:est:ubl-fin}, \ref{lem:est-uint} and \ref{lem:non_lin_est}, $\uapp$ satisfies an equation of the type
\begin{multline}\label{eq:uapp}
    \p_t \uapp + \uapp \cdot \nabla \uapp + \frac{1}{\e} e_3\wedge \uapp - \Delta_h \uapp -\nu \p_z^2\uapp\\
=\nabla p + \cL\left( \frac{t}{\e} \right)r_{n,N}^\delta+ \wrm_1 + \wrm_2 + \wrm_3+ \mathcal O\left(\frac{\delta}{\sqrt{\e}}\right)_{L^2},
 \end{multline}
We recall that the remainder $r_{n,N}^\delta$, which was defined by \eqref{def:rnd}, satisfies
$$
\lim_{n,N\to\infty}\sup_{\e,\nu,\delta}\| r_{n,N}^\delta\|_{L^2([0,T]\times E, H^{-1,0})}=0.
$$
Equation \eqref{eq:uapp} is supplemented with the boundary conditions \eqref{CL} and the initial condition 
$$
\uapp_{|t=0}=w_0 +\delta w_0^{1} + \delta w_0^{2},
$$
where $\delta w_0^1$ and $\delta w_0^2$ are such that
$$\begin{aligned}
\lim_{n,N\to\infty}\sup_{\delta,\e,\nu}\|\delta w_0^{1}\|_{L^\infty(E,L^2(\U))}=0,\\
\text{and}\quad\forall n,N,\quad\lim_{\e,\nu\to 0}\sup_{\delta>0}\| \delta w_0^{2}\|_{L^\infty(E,L^2(\U))}=0.\end{aligned}$$In order to avoid too heavy notation, we will simply write
$$
\uapp_{|t=0}=w_0 + o(1).
$$

\subsection{Energy estimate}

\label{ssec:energy}

 We now evaluate the difference between $\ug$ and $\uapp$ thanks to an energy estimate.
The function $\ug-\uapp$ is a solution of
\begin{eqnarray*}
&& \p_t(\ug-\uapp) +\frac{1}{\e} e_3\wedge (\ug-\uapp)-\Delta_h(\ug-\uapp) -\nu \p_z^2(\ug-\uapp)\\
&=&\nabla p'+ \wrm_1 + \wrm_2+\wrm_3 - \cL\left(\frac{t}{\e}  \right) r_{n,N}^\delta + \mathcal O\left(\frac{\delta}{\sqrt{\e}}\right)_{L^2}\\&-& (\ug\cdot \nabla)(\ug-\uapp) -\left[(\ug-\uapp)\cdot \nabla \right] \uapp.
\end{eqnarray*}

Taking the scalar product of the above equation by $\ug-\uapp$ and using the Cauchy-Schwarz inequality, we deduce that for all $t>0$, for almost every $\om \in E$,
\begin{eqnarray*}
&&\frac{1}{2}\frac{d}{dt}\| \ug(t,\om)-\uapp(t,\om)\|_{L^2}^2 +\frac{1}{2} \|\ug(t,\om)-\uapp (t,\om)\|_{H^{1,0}}^2 \\
&\leq & \int_{\T^2\times[0,a]}\left| \left[\left((\ug(t,\om)-\uapp(t,\om))\cdot\nabla  \right) \uapp (t,\om)\right]\cdot(\ug(t,\om)-\uapp(t,\om)) \right|\\
&+& \left\| \wrm_1 (t,\om)\right\|_{L^2(\U)}^2 + \left\| \wrm_2(t,\om) \right\|_{H^{-1,0}}^2 +  \left\| \wrm_3 (t,\om)\right\|_{L^2(\U)}^2 \\&+&C \left\|r_{n,N}^\delta(t)  \right\|_{H^{-1,0}}^2 + C \frac{\delta^2}{\e} + C \| \ug(t,\om)-\uapp(t,\om)\|_{L^2}^2 .
\end{eqnarray*}
In the above inequality, we have dropped the term  $\nu\|\p_z(\ug-\uapp)\|_{L^2}^2$ in the left-hand side.
We now evaluate the term
$$
\int_{\T^2\times[0,a]}\left|\left((\ug-\uapp)\cdot \nabla \right)  \uapp \cdot(\ug-\uapp) \right|.
$$
First, let us write
$$
\uapp=\left[ \uint + u^{\text{stop}} \right] + \left[ \ubl + \delta \ubl \right].
$$
The function $\uint + u^{\text{stop}} $ is bounded in $L^\infty([0,T]\times E, W^{1,\infty}(\U)$ by a constant $C_{n,N}$; similarly, $\nabla_h(\ubl + \delta\ubl)$ is bounded in $L^\infty([0,T]\times E\times \U).$ As a consequence, we have
\begin{eqnarray*}
&&\int_{\T^2\times[0,a]}\left| (\ug-\uapp)\cdot \nabla \left[ \uint + u^{\text{stop}} \right] \cdot(\ug-\uapp) \right|\\&+& \int_{\T^2\times[0,a]}\left| (\ug_h-\uapp_h)\cdot \nabla_h \left[ \ubl + \delta \ubl \right] \cdot(\ug-\uapp) \right|\\
&\leq &C_{n,N,K}\|\ug-\uapp \|_{L^2([T^2\times [0,a])}^2.
\end{eqnarray*}
There remains to derive a bound for the term
$$
\int_{\T^2\times[0,a]}\left| (\ug_3-\uapp_3)\p_z \left[ \ubl + \delta \ubl \right] \cdot(\ug-\uapp) \right|;
$$
the calculations are quite similar to those of Lemma \ref{lem:non_lin_est}. We first split the integral on $[0,a]$ into two integrals, one bearing on $[0,a/2]$ and the other on $[a/2,a]$. The term $\ubl_T$ (resp. $\ubl_B+\delta \ubl$) is exponentially small on $[0,a/2]$ (resp. on $[a/2,a]$), and thus we neglect it in the final estimate. Moreover, we have for instance
\begin{eqnarray*}
&& \int_0^{a/2}\int_{\T^2}\left| (\ug_3-\uapp_3)\p_z \left[ \ubl_B + \delta \ubl \right] \cdot(\ug-\uapp) \right|\\
&\leq & \left\| \frac{1}{z}(\ug_3-\uapp_3) \right\|_{L^2}\left\| z\p_z \left[ \ubl_B + \delta \ubl \right] \right\|_{L^\infty}\left\|\ug-\uapp \right\|_{L^2}\\
&\leq & C \| \p_z(\ug_3-\uapp_3) \|_{L^2(\U)}\left\|\ug-\uapp \right\|_{L^2(\U)}\\
&\leq & C \| \ug-\uapp\|_{H^{1,0}}\left\|\ug-\uapp \right\|_{L^2(\U)}.
\end{eqnarray*}
Eventually, we infer that
\begin{eqnarray*}
 &&\int_{\T^2\times[0,a]}\left| (\ug_3-\uapp_3)\p_z \left[ \ubl + \delta \ubl \right] \cdot(\ug-\uapp) \right|\\
&\leq & C \left\|\ug-\uapp \right\|_{L^2(\U)}^2 + C \| \ug-\uapp\|_{H^{1,0}}\left\|\ug-\uapp \right\|_{L^2(\U)}.
\end{eqnarray*}

Gathering all the above estimates and integrating on $E$, we deduce that
\begin{eqnarray*}
&&\frac{\p}{\p t} \left\| \ug- \uapp\right\|_{L^2(E\times \U)}^2 + \left\| \ug- \uapp\right\|_{L^2(E,H^{1,0})}^2\\&\leq& C \left\| \ug- \uapp\right\|_{L^2(E\times \U)}^2 \\
&+&\left\| r_{n,N}^\delta  \right\|_{L^2(E,H^{-1,0})}^2 + \frac{C\delta^2}{\e} \\
&+&\left\| \wrm_1 \right\|_{L^2(E\times \U)}^2 + \left\| \wrm_2 \right\|_{L^2(E,H^{-1,0})}^2 + \left\| \wrm_3 \right\|_{L^2(E\times \U)}^2 .
\end{eqnarray*}
Using Gronwall's Lemma, we infer that for all $t\in[0,T]$,
\begin{eqnarray}\label{energy}
&& \left\| (\ug- \uapp)(t)\right\|_{L^2(E\times \U)}^2 + \int_0^t \left\| \ug- \uapp\right\|_{L^2(E,H^{1,0})}^2\\
&\leq &C\left[\left\| \wrm_1 \right\|_{L^2([0,T]\times E\times \U)}^2  +\left\| \wrm_2 \right\|_{L^2([0,T]\times E,H^{-1,0})}^2+ \left\| \wrm_3 \right\|_{L^2([0,T]\times E\times \U)}^2 \right]\nonumber\\ &+& C\left[\left\| r_{n,N}^\delta  \right\|_{L^2([0,T]\times E,H^{-1,0})}^2+\frac{\delta^2}{\e}\right] \nonumber.
\end{eqnarray}

\vskip2mm
$\bullet$ We are now ready to prove Theorem \ref{thm:conv}. Let us write
\begin{eqnarray*}
 \ug(t)-\mathcal L\left( \frac{t}{\e} \right)w(t) &=&\left[\ug-\uapp\right](t)+\left[  \uapp(t) -\mathcal L\left( \frac{t}{\e} \right)\wnd (t) \right]\\&&+\left[\mathcal L\left( \frac{t}{\e} \right)[\wnd-w^\delta](t) \right] +\mathcal L\left( \frac{t}{\e} \right)[w^\delta-w](t),
\end{eqnarray*}
where
\begin{itemize}
 \item[$\triangleright$]  the term $\ug-\uapp$ satisfies the energy estimate \eqref{energy};
\item[$\triangleright$]  the term $\uapp(t) -\mathcal L\left( \frac{t}{\e} \right)\wnd (t)$ is equal to $u^\text{rem}+ \vint + \delta \uint_K,$ and thus vanishes as $\e,\nu\to 0$ in $L^\infty([0,T], L^2(E, H^{1,0}))$,  uniformly in $\delta>0,$ and for all $n,N,K;$

\item[$\triangleright$]  the term  $\wnd-w^\delta$ vanishes as $n,N\to\infty$ uniformly in $\delta,\e,\nu$ according to the first step in paragraph \ref{sec:approx};
\item[$\triangleright$]  the term $w^\delta -w$ vanishes as $\delta\to 0,$ uniformly in $\e,\nu$, according to \eqref{in:w_wdelta}.
\end{itemize}

Let $\eta>0$ be arbitrary. We first take $n_0,N_0$ large enough so that for all $\delta>0$, $\e,\nu,\beta>0$,
$$\begin{aligned}
   \|r_{n_0,N_0}^\delta  \|_{L^\infty([0,T]\times E,H^{-1,0})}^2 \leq \eta,\\
\|\wnd-w^\delta\|_{L^\infty([0,T]\times E, L^2)}^2, \|\wnd-w^\delta\|_{L^\infty( E, L^2([0,T], H^{1,0}))}^2 \leq \eta.
  \end{aligned}
$$
Thanks to \eqref{wrm3}, we now choose $K>0$ large enough so that for all $\e,\nu,\beta,\delta$,
$$
\left\| \wrm_3 \right\|_{L^2([0,T]\times E\times \U)}^2 \leq \eta.
$$
Remembering properties \eqref{wrm1}-\eqref{wrm2}, we deduce that there exists $\e_0,\nu_0>0$ such that for all $\delta$, for all $\e<\e_0,\nu<\nu_0$ with $\nu\leq C\e$ and $\beta\sqrt{\e\nu}\leq C$,
$$
\begin{aligned}
 \left\| \wrm_1 \right\|_{L^2([0,T]\times E\times \U)}^2  \leq \eta,\\
\left\| \wrm_2 \right\|_{L^2([0,T]\times E,H^{-1,0})}^2\leq \eta,\\
\left\| \uapp(t) -\mathcal L\left( \frac{t}{\e} \right)w_{n_0,N_0}^\delta (t)\right\|_{L^\infty([0,T], L^2(E, H^{1,0}))}^2\leq \eta.
\end{aligned}
$$
At this stage, we have, for all $\delta>0$, for all $\e,\nu,\beta$ such that $0<\e<\e_0$ and $\nu=\mathcal O(\e),$ $\sqrt{\nu\e}\beta=\mathcal O(1),$
\begin{multline*}
 \left\| \ug(t)- \cL \left( \frac{t}{\e} \right)w(t)\right\|_{L^2(E\times \U)}^2 + \int_0^t \left\| \ug(s)- \cL \left( \frac{s}{\e} \right)w(s)\right\|_{L^2(E,H^{1,0})}^2\:ds\\\leq C\eta + C \|w^\delta -w \|_{L^\infty([0,T], L^2(E\times \U))}^2 + C \|w^\delta -w \|_{L^2([0,T]\times E, H^{1,0})}^2 + \frac{C\delta^2}{\e} .
\end{multline*}
We now let $\delta\to 0$ in the right-hand side, and we obtain
$$
\left\| \ug(t)- \cL \left( \frac{t}{\e} \right)w(t)\right\|_{L^2(E\times T^2\times [0,a])}^2 + \int_0^t \left\| \ug(s)- \cL \left( \frac{s}{\e} \right)w(s)\right\|_{L^2(E,H^{1,0})}^2\:ds\leq C\eta
$$
for $\e,\nu$ small enough. The convergence result is thus proved.

\section{Mean behaviour at the limit}

\label{sec:mean_bhv}
This section is devoted to the proof of Proposition \ref{prop:meanbhv}. Let us recall what the issue is: in general, the source term $ S_T(\sigma)$ in \eqref{eq:lim} is a random function, and thus so is $w$. Hence, our goal is to derive an equation, or a system of equations, on $\bE[w]$. We emphasize that such a derivation is not always possible, because of the nonlinear term $\bar Q(w,w)$. However, we shall prove that the vertical average of $w_h$, denoted by $\bar w_h$, is always a deterministic function. Moreover, if the torus is nonresonant (see \eqref{hyp:tor_nonres}), then $w-\bar w$ solves a linear equation, and thus in this particular case we can derive an equation for $\bE[w-\bar w]$.

Our first result is the following:
\begin{lemma}Assume that the group transformation $(\theta_\tau)_{\tau\in\R}$ is ergodic.
Let $u_0\in \cH\cap H^1$, and let $w$ be the solution of \eqref{eq:lim}. Set
$$
\bar w_h=\frac{1}{a}\int_0^a w_h.
$$
Then $\bar w_h$ is the unique solution in $\mathcal C([0,\infty), L^2(\T^2))\cap L^2_\text{loc}([0,\infty), H^1(\T^2))$ of the two-dimensional Navier-Stokes equation
\be
\left\{
\begin{array}{l}
\ds\p_t\bar w_h + \bar w_h \cdot \nabla \bar w_h-\Delta_h\bar w_h + \frac1{a\sqrt{2}}\sqrt{\frac{\nu}{\e}} \bar w_h+\nu\beta \bE\left[ S_T(\sigma)\right]_h=0,\\
\bar w_{h|t=0}=\frac{1}{a}\int_0^a w_{0,h}.
\end{array}
\right.\label{eq:barw}
\ee
In particular, $\bar w_h$ is a deterministic function.

\end{lemma}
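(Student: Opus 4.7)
The plan is to project \eqref{eq:lim} onto the subspace of $\cH$ generated by $\{N_k : k_3 = 0,\ k_h \neq 0\}$, which encodes precisely the vertical mean: indeed $N_{(k_h, 0)}$ is independent of $z$ and has vanishing third component, while $\int_0^a N_k\,dz = 0$ whenever $k_3 \neq 0$. Integrating $\dv w = 0$ in $z$ and using the boundary conditions $w_3|_{z=0} = w_3|_{z=a} = 0$ immediately gives $\bw_3 \equiv 0$ and $\dv_h \bw_h = 0$. On these modes the linear terms $\p_t w$ and $-\Delta_h w$ project to $\p_t \bw_h$ and $-\Delta_h \bw_h$, while the Ekman operator \eqref{def:Sekman} simplifies: since $\lambda_k = 0$ and $|k'| = |k_h'|$ for $k_3 = 0$, the coefficient $A_k$ reduces to $\frac{1}{2\sqrt{2}\,a}\sum_{\pm}(1\pm i) = \frac{1}{a\sqrt{2}}$, producing the damping $\frac{1}{a\sqrt{2}}\sqrt{\nu/\e}\,\bw_h$.

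The ergodic heart of the argument concerns $S_T(\sigma)$. Projecting the formula \eqref{def:barS_T} onto $\{k_3 = 0,\ k_h \neq 0\}$ retains only the coefficients $\cE_0[\hat\sigma(t,\cdot,k_h)]$. According to Proposition \ref{prop:ergodic}, the identity $\cE_0[\hat\sigma(t,\cdot,k_h)](\theta_\tau \om) = \cE_0[\hat\sigma(t,\cdot,k_h)](\om)$ holds a.s., so this random variable is $(\theta_\tau)$-invariant; the assumed ergodicity therefore forces it to be almost surely constant, equal by the pointwise ergodic theorem to $\bE[\hat\sigma(t,0,k_h)]$. Thus the projection of $\nu\beta S_T(\sigma)$ coincides with the deterministic quantity $\nu\beta\,\bE[S_T(\sigma)]_h$.

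The nonlinearity is handled by the decomposition $w = \bw + w^{\mathrm{osc}}$, where $w^{\mathrm{osc}}$ collects the modes $N_k$ with $k_3 \neq 0$. For $m = (m_h, 0)$, the resonance set $\mathcal{K}_m$ forces $\lambda_\ell + \lambda_n = 0$, and since $\lambda_\ell = 0$ iff $\ell_3 = 0$, the cross terms $\bar Q(\bw, w^{\mathrm{osc}}) + \bar Q(w^{\mathrm{osc}}, \bw)$ have no $k_3 = 0$ component. The diagonal piece $\bar Q(\bw, \bw)$ involves only triplets with $\ell_3 = n_3 = 0$, and since the corresponding basis vectors are divergence-free and $z$-independent, it reduces to the two-dimensional Leray projection of $\bw_h \cdot \nabla_h \bw_h$. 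The only remaining contribution is $\bar Q(w^{\mathrm{osc}}, w^{\mathrm{osc}})$ restricted to $k_3 = 0$ modes; its resonant triplets must satisfy $n_3 = -\ell_3 \neq 0$ and $|\ell'| = |n'|$, so this piece vanishes identically in the non-resonant torus case and, more generally, can be shown to form a pure gradient that is absorbed into $\nabla_h \bar p$, thanks to the structural identities for $\alpha_{\ell,n,m}$ worked out in \cite{CheminDGG}.

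Assembling these facts produces exactly \eqref{eq:barw} with a deterministic source term; well-posedness of two-dimensional Navier--Stokes in $\mathcal{C}([0,\infty), L^2(\T^2))\cap L^2_{\mathrm{loc}}([0,\infty), H^1(\T^2))$ then delivers uniqueness, and the resulting equation being deterministic forces $\bw_h$ itself to be deterministic. The main obstacle is the last point of the nonlinear analysis: in the resonant-torus case, verifying that the ``exotic'' contributions of $\bar Q(w^{\mathrm{osc}}, w^{\mathrm{osc}})$ to the modes $(k_h, 0)$ assemble into a gradient and not an actual random driving term, which is precisely what guarantees that the projected equation closes autonomously on $\bw_h$ with no residual coupling to $w^{\mathrm{osc}}$.
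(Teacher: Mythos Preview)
Your proof is correct and follows the same strategy as the paper: project \eqref{eq:lim} onto the $k_3=0$ modes, compute the Ekman coefficient $A_{(k_h,0)}=\frac{1}{a\sqrt{2}}$, use ergodicity to identify $\cE_0[\hat\sigma(t,\cdot,k_h)]$ with $\bE[\hat\sigma(t,\cdot,k_h)]$, and invoke the known structure of $P_h(\bar Q(w,w))$ from \cite{CheminDGG,MasmoudiCPAM}. The only cosmetic difference is that the paper treats the nonlinear term in one line by directly citing Proposition~6.2 of \cite{CheminDGG} for the identity $P_h(\bar Q(w,w))=(\bar w_h\cdot\nabla_h)\bar w_h+\nabla_h\bar p$, whereas you unpack the decomposition $w=\bar w+w^{\mathrm{osc}}$ and isolate the ``exotic'' osc--osc contribution explicitly before citing the same source. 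One small omission: to conclude that the deterministic projection you obtain equals $\bE[S_T(\sigma)]_h$ as written in \eqref{eq:barw}, you implicitly need that $\bE\big[\cE_{-\lambda_k}[\hat\sigma]\big]=0$ for $k_3\neq 0$, so that $\bE[S_T(\sigma)]$ itself lives on the $k_3=0$ modes; this follows immediately from stationarity (the paper records it as a separate identity), and is worth stating.
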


\begin{proof}
Let us recall that if
$$
\phi=\sum_{k\in\Z^3}\hat \phi(k) N_k\in\cH,
$$ 
then
$$
P_h(\phi):=\frac{1}{a}\int_0^a\phi_h=\sum_{k_h\in\Z^2} \hat \phi(k_h,0)e^{ik_h'\cdot x_h}n_h(k_h,0).
$$
Thus we have to project equation \eqref{eq:lim} onto the horizontal modes, which correspond to $k_3=0$. 
It is easily checked that
$$
P_h\left( S_{B}( w)\right)= S_{B,h}(\bar w_h)= \frac{1}{\sqrt{2}a}\bar w_h,
$$
and we recall (see \cite{MasmoudiCPAM} and Proposition 6.2 in \cite{CheminDGG}) that there exists a function $\bar p\in L^2(\T^2)$ such that for all $w\in H^1\cap \cH$
$$
P_h(\bar Q(w,w) )=(\bar w_h \cdot \nabla_h)\bar w_h + \nabla_h \bar p.
$$
Thus we only have to prove that
$$
P_h( S_{T}(\sigma))= \bE\left[ S_{T,h}(\sigma)\right],$$
almost surely in $E$. We use the following fact, of which we postpone the proof: if $\lambda\in\R$,  then
\be
\bE[\cEl[\sigma]]= \left\{ \begin{array}{ll}
                                          \bE[ \sigma]&\text{ if }\lambda=0,\\
0&\text{ else. }
                                         \end{array}
 \right.\label{esp:sigma1}
\ee
Moreover, if $\lambda=0$, then
\be
\cEl[\sigma]=\bE[ \sigma]\quad\text{almost surely.}\label{esp:sigma2}
\ee
Note also that $\lambda_k=0$ if and only if $k_3=0$. Remembering \eqref{def:barS_T}, we deduce from \eqref{esp:sigma1} and \eqref{esp:sigma2} that

\begin{eqnarray*}
 \bE[ S_{T,h}(\sigma)]&=&-\frac{i}{\sqrt{aa_1a_2}} \sum_{k_h\in\Z^2}\frac{1}{|k'_h|^2}(k'_h)^\bot\cdot \bE[\hat \sigma(k_h)] e^{ik_h'\cdot x_h}\begin{pmatrix}
                                                                                                      ik_2'\\-ik_1'
                                                                                                     \end{pmatrix}
\\
&=&-\frac{i}{\sqrt{aa_1a_2}}\sum_{k_h\in\Z^2}\frac{1}{|k'_h|^2}(k'_h)^\bot\cdot \cE_0[\hat \sigma(k_h)] e^{ik_h'\cdot x_h}\begin{pmatrix}
                                                                                                      ik_2'\\-ik_1'
                                                                                                     \end{pmatrix}\\
&=&P_h[ S_T(\sigma)].
\end{eqnarray*}

Thus the lemma is proved, pending the derivation of \eqref{esp:sigma1} and \eqref{esp:sigma2}. Concerning \eqref{esp:sigma1}, the invariance of the probability measure $m_0$ with respect to $\theta_\tau$ entails that
$$
\bE\left[\cEl[\sigma]\right]=\bE[\sigma]\lim_{\theta\to\infty}\frac{1}{\theta}\int_0^\theta e^{-i\lambda\tau}\;d\tau,
$$
and \eqref{esp:sigma1} follows easily. Equality \eqref{esp:sigma2} is a consequence of Birkhoff's ergodic theorem (see \cite{sinai}).
\end{proof}

The first point in Proposition \ref{prop:meanbhv} follows easily from the above Lemma (together with Theorem \ref{thm:conv}), by simply noticing that the sequence
$$
\exp\left( -\frac{t}{\e} L \right)w(t)=\sum_k e^{-i\lambda_k\frac{t}{\e}}\mean{N_k, w(t)} N_k
$$
weakly converges in $L^2([0,T]\times \U\times E)$ towards
$$
\sum_{\substack{k\in\Z^3,\\\lambda_k=0}}\mean{N_k, w(t)} N_k=\bar w(t)=(\bar w_h(t),0).
$$

\begin{remark}
Notice that
$$
\rot_h P_h[ S_T(\sigma)]=-\sqrt{\frac{a_1a_2}{a}}bE \left[ \rot_h \sigma \right].
$$ 
Hence we recover the result of \cite{DesjardinsGrenier}: the vorticity $\phi:=\rot_h \bar w_h$ is a solution of
$$
\p_t \phi + \bar w_h\cdot \nabla_h \phi -\Delta_h \phi +\frac{1}{a\sqrt{2}}\sqrt{\frac{\nu}{\e}}\phi={\nu\beta}\sqrt{\frac{a_1a_2}{a}} \bE \left[ \rot_h \sigma \right].
$$

\end{remark}

From now on, we assume that the torus is nonresonant (see \eqref{hyp:tor_nonres}). Consequently, with $\bar w=(\bar w_h,0),$ we have
$$
\bar Q(w-\bar w, w-\bar w)=0.
$$
Moreover, using \eqref{esp:sigma1}-\eqref{esp:sigma2}, it is easily checked that
$$
\bE\left[ S_{T,3}(\sigma) \right]=0.
$$
Setting $u= w-\bar w$, we deduce that $u$ solves a linear equation, namely
$$
\p_t u + 2\bar Q(u,\bar w) - \Delta_h u + \sqrt{\frac{\nu}{\e}} S_B(u)+ \nu\beta  S_T(\sigma) -\nu\beta \bE[S_T(\sigma)]=0.
$$
Since $\bar w$ is deterministic, we have
$$
\bE\left[ \bar Q(u,\bar w)  \right]=\bar Q(\bE[u],\bar w) .
$$
Hence we can further decompose $u$ into $\tilde w+ \tilde u$, where $\tilde w$ is deterministic and does not depend on $\sigma$, and $\tilde u$ is random with zero average. The precise result is stated in the following lemma, from which Proposition \ref{prop:meanbhv} follows immediately:

\begin{lemma}
Assume that the hypotheses of Proposition \ref{prop:meanbhv} hold. Then 
$$
w=\bar w + \tilde w+ \tilde u
$$
where:
\begin{itemize}
 \item the function $\bar w$ is deterministic and satisfies \eqref{eq:barw};
\item the function $\tilde w$ is deterministic and satisfies
$$
\left\{
\begin{array}{l}
\ds\p_t\tilde w +2 \bar Q(\bar w, \tilde w)-\Delta_h\tilde w +\sqrt{\frac{\nu}{\e}} S_B(\tilde w) =0,\\
\tilde w_{|t=0}=u_0-\bar w_{|t=0};
\end{array}
\right.
$$
\item the function $\tilde u$ is random, with zero average, and satisfies
$$
\left\{
\begin{array}{l}
\ds\p_t\tilde u + 2\bar Q(\bar w, \tilde u)-\Delta_h\tilde u + \sqrt{\frac{\nu}{\e}} S_B(\tilde u)+ \nu\beta S_T(\sigma) - \nu\beta\bE[S_T(\sigma) ] =0,\\
\tilde u_{|t=0}=0.
\end{array}
\right.
$$
\end{itemize}

\end{lemma}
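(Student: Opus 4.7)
The plan is to exploit the non-resonance hypothesis \eqref{hyp:tor_nonres} to linearize the equation satisfied by $u := w - \bar w$, and then to split $u$ further into a deterministic part and a zero-mean random part.

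First I would expand $\bar Q(w,w) = \bar Q(\bar w, \bar w) + 2\bar Q(\bar w, u) + \bar Q(u, u)$. The self-interaction $\bar Q(u,u)$ vanishes under \eqref{hyp:tor_nonres}: indeed $\bar w$ carries only modes with $k_3 = 0$ (it is the vertical mean lifted to $\R^3$) while $u$ carries only modes with $k_3 \neq 0$, and the non-resonance condition excludes any contribution of such pairs, exactly as recalled from \cite{CheminDGG} in paragraph \ref{ssec:mean_bhv}. I would then subtract \eqref{eq:barw}, lifted to $\R^3$ as the equation for $\bar w = (\bar w_h, 0)$ — which is consistent because $\bE[S_{T,3}(\sigma)] = 0$, as combining \eqref{esp:sigma1} with the identity $\lambda_k = 0 \Leftrightarrow k_3 = 0$ forces every vertical Fourier mode of $\bE[S_T(\sigma)]$ in \eqref{def:barS_T} to vanish — from \eqref{eq:lim}. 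This yields the linear Cauchy problem
$$\p_t u + 2\bar Q(\bar w, u) - \Delta_h u + \sqrt{\frac{\nu}{\e}} S_B(u) + \nu\beta\bigl(S_T(\sigma) - \bE[S_T(\sigma)]\bigr) = 0,\quad u_{|t=0} = u_0 - \bar w_{|t=0},$$
with a zero-mean random source and otherwise deterministic coefficients.

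Next I would define $\tilde w$ and $\tilde u$ as the unique solutions of the two linear problems stated in the lemma: $\tilde w$ carries the initial data with no source, while $\tilde u$ has zero initial data and the zero-mean random source. Well-posedness in $L^\infty(E, \mathcal C([0,\infty), \cH \cap H^{0,1})) \cap L^\infty(E, L^2_{\text{loc}}([0,\infty), H^{1,0}))$ follows from the same energy estimates as in Proposition \ref{prop:eqlim}, in fact in a simpler form because of the linearity: the coefficient $\bar w$ inherited from 2D Navier--Stokes is smooth enough in the horizontal variables, the operator $S_B$ has non-negative real part by \eqref{def:Sekman} so it contributes a damping term, and the random source lies in the correct space thanks to \eqref{def:barS_T} and the regularity assumptions on $\sigma$. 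By uniqueness for the linear $u$-equation, $u = \tilde w + \tilde u$, which gives the announced decomposition $w = \bar w + \tilde w + \tilde u$. Finally, to verify $\bE[\tilde u] = 0$, I would apply $\bE$ to the $\tilde u$-equation: since $\bar w$ is deterministic (by the preceding lemma) and the operators $\Delta_h$, $S_B$, $\bar Q(\bar w, \cdot)$ are all linear with deterministic coefficients, $\bE$ commutes with each of them, and the random forcing has zero mean by construction. Hence $\bE[\tilde u]$ satisfies the same homogeneous linear problem as $\tilde w$ but with zero initial data, so $\bE[\tilde u] \equiv 0$ by uniqueness.

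The main obstacle is the routine but not entirely trivial well-posedness of the two linear Cauchy problems in the anisotropic $H^{0,1}/H^{1,0}$ setting. This reduces to propagating the energy identity of \cite{CheminDGG} with the bilinear form $\bar Q(\bar w, \cdot)$ in place of $\bar Q(w,w)$; the key cancellation and the recovery of vertical derivatives via the divergence-free constraint, which were the substantive ingredients in Proposition 6.5 of \cite{CheminDGG}, depend only on the structure of $\bar Q$ and are therefore inherited by the linearized equations.
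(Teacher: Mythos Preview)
Your argument is correct and follows essentially the same route as the paper: the discussion preceding the lemma already derives the linear equation for $u=w-\bar w$ from $\bar Q(w-\bar w,w-\bar w)=0$ and $\bE[S_{T,3}(\sigma)]=0$, then observes that $\bE$ commutes with $\bar Q(\bar w,\cdot)$ since $\bar w$ is deterministic, after which the decomposition $u=\tilde w+\tilde u$ is immediate. You have simply made explicit the well-posedness and uniqueness for the two linear Cauchy problems that the paper leaves implicit.
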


\section*{Appendix A: convergence of the family $\sigma_\alpha$}~

{\sc Lemma A.1} {\it 
Let $T>0$. Assume that $\sigma\in L^\infty([0,T]\times E, \mathcal C(\R))\cap L^\infty([0,T]\times \R_\tau \times E)$. Then for all $T'>0$, $$
\sigma_\alpha- \sigma \to 0 \quad \text{ in }L^\infty((0,T)\times (0,T')\times E)\ \text{ as }\alpha \to 0.
$$}

\begin{proof}
By definition of $\sigma_\alpha$, we have
\begin{eqnarray*}
 \sigma_\alpha(t,\tau,\om)&=&\frac{1}{2\pi}\int_{\R\times\R} \exp(-\alpha |\lambda|-\alpha |s|) e^{i \lambda (\tau-s)}\sigma(t,s,\om)\:ds\:d\lambda\\
&=&\frac{1}{2\pi}\int_{\R}\exp(-\alpha |s|)\frac{2 \alpha}{\alpha^2 + (\tau-s)^2}\sigma(t,s,\om)\:ds\\
&=&\frac{1}{\pi}\int_{\R}\exp(-\alpha |\tau + \alpha s|)\frac{1}{1+ s^2}\sigma(t,\tau + \alpha s,\om)\:ds.
\end{eqnarray*}
Consequently,
\begin{eqnarray*}
\sigma(t,\tau,\om) -\sigma_\alpha(t,\tau,\om)&=&\frac{1}{\pi}\int_{\R}\exp(-\alpha |\tau + \alpha s|)\frac{1}{1+ s^2}\left[ \sigma(t,\tau,\om)-\sigma(t,\tau + \alpha s,\om) \right]\:ds\\
&&+ \frac{1}{\pi}\sigma(t,\tau,\om)\int_{\R}\left[ 1- \exp(-\alpha |\tau + \alpha s|)\right]\frac{1}{1+ s^2}\:ds.
\end{eqnarray*}
The convergence result of Lemma A.1 follows easily.
\end{proof}

\section*{Appendix B: proof of Proposition \ref{prop:ergodic}}

Let $\lambda\in\R$ be arbitrary, and let $\phi\in L^2(E)$.

Consider the probability space
$$
E_\lambda:=E \times [0,2\pi),\quad P_\lambda:= P \otimes \frac{d\mu}{2\pi},
$$
where $\mu$ is the standard Lebesgue measure on $[0,2\pi]$. Let us define the following group of transformations, acting on $(E_\lambda, P_\lambda)$
$$
\mathcal T^\lambda_\tau(\om,\varphi):= (\theta_\tau\om , \varphi -\lambda\tau\ \mathrm{mod} 2\pi),\quad \tau\in\R.
$$
Then it is easily checked that $\mathcal T^\lambda_\tau$ is measure-preserving for all $\tau\in\R$. And if $T>0$, we have, for all $\varphi\in[0,2\pi]$,
\begin{eqnarray*}
\int_0^T \Phi(\theta_\tau\om) e^{-i\lambda\tau}\:d\tau&=& e^{-i\varphi}\int_0^T \Phi(\theta_\tau\om) e^{i\varphi-i\lambda\tau}\:d\tau\\
&=&e^{-i\varphi}\int_0^T \Psi\left( \mathcal T^\lambda_\tau(\om,\varphi) \right)\:d\tau,
\end{eqnarray*}
where the function $\Psi\in L^2(E_\lambda)$ is defined by $$\Psi\left( \om,\varphi \right):=\Phi(\om) e^{i\varphi}.$$

Hence, according to Birkhoff's ergodic theorem (see \cite{sinai}), there exists a function $\Psi^\lambda\in L^2(E_\lambda) $, invariant by the group of transformations $\left(\mathcal T^\lambda_\tau\right)_{\tau\in\R}$, such that
$$
\frac{1}{T}\int_0^T \Phi(\theta_\tau\om) e^{-i\lambda\tau}\:d\tau\to e^{-i\varphi}\Psi^\lambda(\om,\varphi),
$$
$P_\lambda$ - almost surely in $E_{\lambda}$ and in $L^2(E_{\lambda})$. Moreover, the function
$$
(\om,\Phi)\mapsto e^{-i\varphi}\Psi^\lambda(\om,\varphi) 
$$
clearly does not depend on $\varphi$ by construction. Hence, we set
$$
\Phi^{\lambda}(\om):=e^{-i\varphi}\Psi^\lambda(\om,\varphi)\quad \forall(\om,\varphi)\in E_\lambda,
$$
and we have proved that
$$
\frac{1}{T}\int_0^T \Phi(\theta_\tau\om) e^{-i\lambda\tau}\:d\tau\to \Phi^{\lambda}(\om)
$$
almost surely in $\om$ and in $L^2(E)$.

Now, since $\Psi^\lambda$ is invariant by the group $\left(\mathcal T^\lambda_\tau\right)_{\tau\in\R}$ and $\Phi^\lambda$ does not depend on $\varphi$, we have, almost surely in $\om$,
\begin{eqnarray*}
 \Phi^{\lambda}(\theta_\tau\om)&=&e^{-i\varphi}\Psi^\lambda(\theta_\tau\om,\varphi)\\
&=&e^{-i(\varphi-\lambda\tau)}\Psi^\lambda(\theta_\tau\om,\varphi-\lambda\tau\ \mathrm{mod} 2\pi)\\
&=&e^{-i(\varphi-\lambda\tau)}\Psi^\lambda\left( \mathcal T^\lambda_\tau(\om,\varphi) \right)\\
&=&e^{-i(\varphi-\lambda\tau)}\Psi^\lambda\left(\om,\varphi \right)\\
&=&e^{i\lambda\tau}\Phi^{\lambda}(\om).
\end{eqnarray*}
This completes the proof of Proposition \ref{prop:ergodic}.

\section*{Appendix C: the stopping Lemma}~

{\sc Lemma A.2} (Stopping condition) {\it Let $T_0>0$, and let $\delta_B,\delta_T\in L^\infty([0,T_0], H^3(\T^2))$ be two families  such that $$\int (\delta_{T,3}-\delta_{B,3}) dx_h=0$$
and such that as $\e\to 0,$ for $A=T,B$,
$$\frac1\e \| \delta _A\| _{ L^\infty([0,T],H^1(\T^2))} \to 0,\quad  \| \delta _A\| _{ L^\infty([0,T],H^3(\T^2))} \to 0\hbox{ and } \|\p _t  \delta_A \| _{ L^\infty([0,T],H^1(\T^2))} \to 0.$$
Then there exists a family $u^\text{stop} \in L^\infty([0,T], L^2(\U))$ with $\nabla\cdot w=0$ such that
$$
u^\text{stop} _{|z=0} =\delta_B, \quad u^\text{stop} _{3|z=a} =\delta_{T,3} \hbox{ and } \p_z u^\text{stop} _{h|z=a} =\delta_{T,h}
$$
and such that  as $\e\to 0,$
$$
\frac{1}{\e}\| u^\text{stop}  \|_{L^\infty([0,T],L^2)} \to 0,\;
\left\| \p_t u^\text{stop}  + \frac1\e Lu^\text{stop}  -\nu\p_{zz} u^\text{stop} -\Delta_h u^\text{stop}  \right\| _{L^\infty([0,T],L^2)} \to 0.
$$}
\vskip2mm

For a proof of the above Lemma, see \cite{forcage-resonant}.

\section*{Acknowledgements}
I wish to express my deepest gratitude to Laure Saint-Raymond, for  helpful discussions during the preparation of this work, and to the D\'epartement de math\'ematiques et applications (\'Ecole normale sup\'erieure, Paris), for its hospitality during the academic year 2007-2008.

\bibliography{windstress_stationary}

\end{document}